\newcommand{\comment}[1]{\vspace{5
    mm}\par \noindent \framebox{\begin{minipage}[c]{0.9 \textwidth}
      \tt COMMENT: #1 \end{minipage}}\vspace{5 mm}\par}
\renewcommand{\comment}[1]{}
\newtheorem{remark}[theorem]{Remark}
\title{Analysis of a method to parameterize planar curves immersed in
  triangulations}
\author{Ramsharan Rangarajan\thanks{Supported by Stanford Graduate
    Fellowship, Stanford University. (email: {\tt
      rram@alumni.stanford.edu})} \and Adrian
  J. Lew\thanks{Corresponding author. Supported by ONR Young
    Investigator Award N000140810852, NSF Career Award CMMI-0747089,
    Department of the Army Research Grant W911NF-07-2-0027. (email:
    {\tt lewa@stanford.edu}).  Department of Mechanical Engineering,
    Stanford University}}
\begin{document}
\maketitle

\begin{abstract} 
  We prove that a planar $C^2$-regular boundary $\Gamma$ can always be
  parameterized with its closest point projection $\pi$ over a certain
  collection of edges $\Gamma_h$ in an ambient triangulation, by
  making simple assumptions on the background mesh. For $\Gamma_h$, we
  select the edges that have both vertices on one side of $\Gamma$ and
  belong to a triangle that has a vertex on the other side. By
  imposing restrictions on the size of triangles near the curve and by
  requesting that certain angles in the mesh be strictly acute, we
  prove that $\pi:\Gamma_h\rightarrow\Gamma$ is a homeomorphism, that
  it is $C^1$ on each edge in $\Gamma_h$ and provide bounds for the
  Jacobian of the parameterization. The assumptions on the background
  mesh are both easy to satisfy in practice and conveniently verified
  in computer implementations. The parameterization analyzed here was
  previously proposed by the authors and applied to the construction
  of high-order curved finite elements on a class of planar piecewise
  $C^2$-curves.
\end{abstract}

\begin{keywords} 
  curve parameterization; closest point projection; curved finite
    elements
\end{keywords}

\begin{AMS} 
  68U05, 65D18
\end{AMS}

\pagestyle{myheadings} \thispagestyle{plain} \markboth{Rangarajan,
  R. and Lew, A.J.}{Parameterization of planar curves}

\section{Introduction}
\label{sec:introduction} 
\begin{figure}
  \centering
  \includegraphics[width=\textwidth]{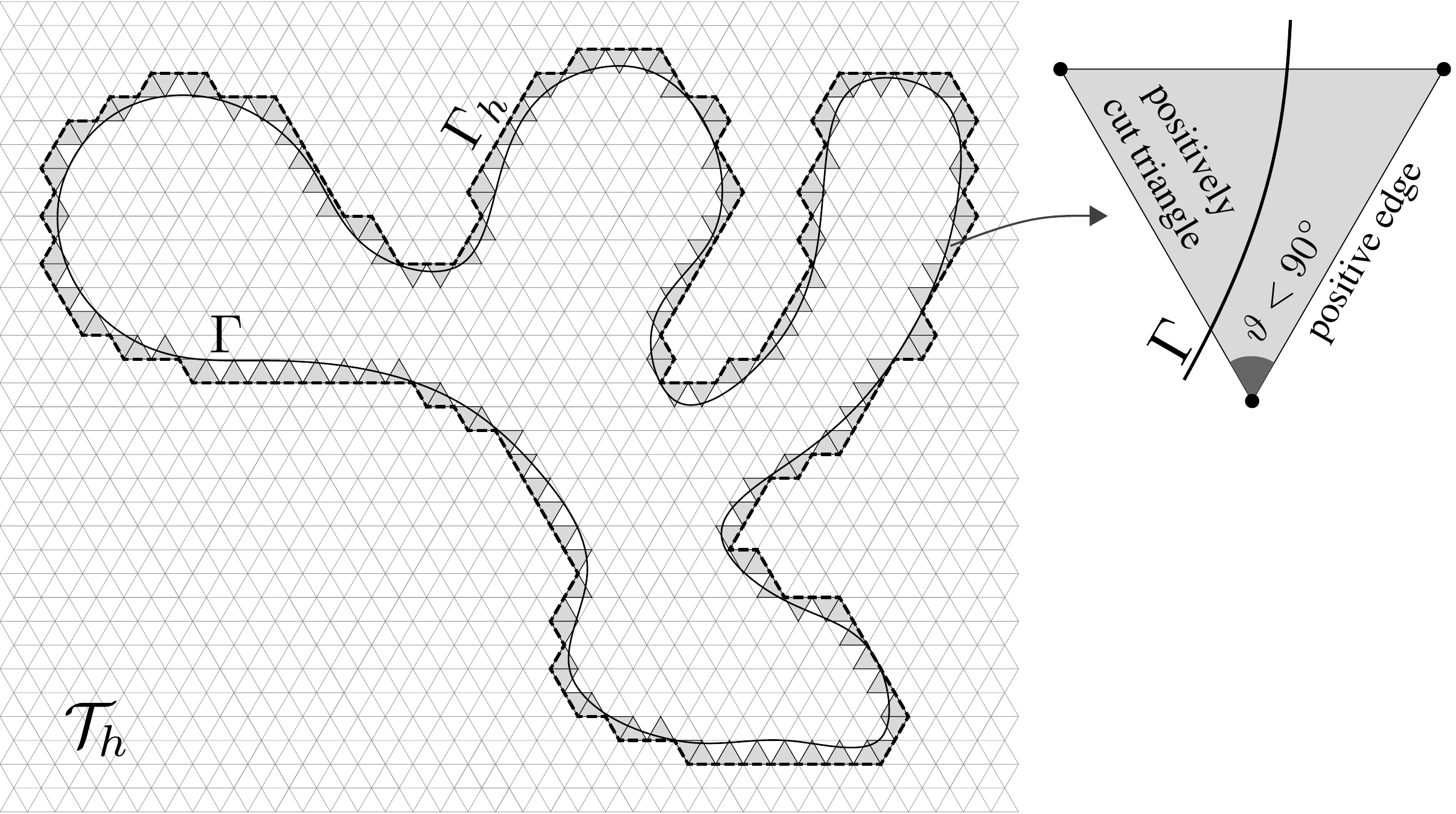}
  \caption{Illustration of the choice of edges in an ambient
    triangulation used to parameterize a $C^2$-regular boundary. The
    curve $\Gamma$ is a cubic spline and is immersed in a
    nonconforming mesh of equilateral triangles. Triangles having one
    vertex inside the region enclosed by $\Gamma$ and two vertices
    outside are said to be positively cut and are shaded in gray. The
    edge of such a triangle joining its two vertices outside is called
    a positive edge; their union is denoted by $\Gamma_h$ and is drawn
    in dotted black lines. Theorem \ref{thm:parameterization}
    identifies sufficient conditions for $\pi:\Gamma_h\rightarrow
    \Gamma$ to constitute a parameterization for $\Gamma$, where $\pi$
    is its closest point projection. A critical one among these
    conditions is that a specific angle in each positively cut
    triangle be strictly acute, namely the one at the vertex of the
    positive edge closest to $\Gamma$, as illustrated in the triangle
    on the right.}
  \label{fig:1}
\end{figure}
The purpose of this article is to analyze a method to parameterize
planar $C^2$-regular boundaries over a collection of edges in a
background triangulation. Such a parameterization was introduced by
the authors in \cite{rangarajan2011parameterization}. The method
consists in making specific choices for the edges in the background
mesh and for the map from these edges onto the curve. For the edges,
we select the ones that have both vertices on one side of the
(orientable) curve to be parameterized and belong to a triangle that
has a vertex on the other side, as illustrated in Fig. \ref{fig:1}.
Such edges are termed positive edges.  For the map, we select the
closest point projection of the curve. In this article, we prove that
the closest point projection restricted to the collection of positive
edges is a homeomorphism onto the curve and that it is $C^1$ on each
positive edge (Theorem \ref{thm:parameterization}).  For this, we have
to impose restrictions on the size of a few triangles near the curve
and request that certain angles in the background mesh be strictly
smaller than $90^\circ$. We also compute bounds for the Jacobian of
the resulting parameterization for the curve.

It is perhaps common knowledge that a sufficiently smooth curve can be
parameterized with its closest point projection over the collection of
interpolating edges in an adequately refined conforming triangulation.
With Theorem \ref{thm:parameterization}, we generalize such an
intuitive parameterization to also include nonconforming background
meshes. In place of the interpolating edges in a conforming mesh, we
pick the collection of positive edges in a nonconforming one, while
still adopting the closest point projection to parameterize the
curve. However, regularity for the curve and refinement for the mesh
do not suffice. We also require certain angles in the mesh to be
strictly acute, as depicted in Fig. \ref{fig:1}. In practice, such an
assumption is both easy to check and satisfy. It is perhaps surprising
that a local algebraic condition on angles in triangles near the curve
precipitates a global topological result. More so, because the angles
required to be acute are irrelevant in the parameterization itself---
neither the identification of positive edges nor the mapping onto the
curve (the closest point projection) depend on them.

A compelling consequence of Theorem \ref{thm:parameterization} is that
\emph{any} planar smooth boundary can be parameterized with its
closest point projection over the collection of positive edges in
\emph{any} sufficiently refined background mesh of equilateral
triangles. It is also interesting to note that the theorem does not
guarantee the same with a background mesh of right-angled
triangles. Such meshes may not satisfy the required assumption on
angles, see \eqref{eq:cond-angle} in Theorem
\ref{thm:parameterization}. On a related note, in
\cite{rangarajan2012universal, rangarajan2011parameterization} we
describe a way of parameterizing curves over edges and diagonals of
meshes of parallelograms, which in particular includes structured
meshes of rectangles. See also \cite{borgers1990triangulation} for a
triangulation algorithm with a similar objective.

The parameterization studied is independent of the particular
description adopted for the curve, is easy to implement and readily
parallelizable.  It also extends naturally to planar curves with
endpoints, corners, self-intersections, T-junctions and practically
all planar curves of interest in engineering and computer graphics
applications, see \cite{rangarajan2011parameterization} and
\cite[Chapter 4]{rangarajan2012universal}. The idea is to construct
such curves by splicing arcs of $C^2$-regular boundaries and
parameterize each arc with its closest point projection.

One of the main motivations behind the parameterization over positive
edges is to accurately represent planar curved domains over
nonconforming background meshes. For once the curved boundary is
parameterized over a collection of nearby edges, we show in
\cite[Chapter 5]{rangarajan2012universal} how a suitable collection of
triangles in the background mesh can be mapped to curved ones to yield
an exact spatial discretization for the curved domain. The
construction of such mappings from straight triangles to curved ones
and their analysis in the context of high-order finite elements with
optimal convergence properties has been the subject of numerous
articles; we refer to a representative few
\cite{ciarlet1972interpolation, ergatoudis1968curved,
  gordon1973transfinite, lenoir1986optimal,
  mansfield1978approximation, scott1973finite, zlamal1973finite} for
details on this subject. Almost without exception, these constructions
have two assumptions in common: (i) a mesh with edges that interpolate
the curved boundary and (ii) a (local) parametric representation for
the curved boundary. The former entails careful mesh generation while
the latter is a strong assumption on how the boundary is described.
The parameterization analyzed here enables relaxing both these
assumptions. 



An outline of the proof of Theorem \ref{thm:parameterization} is given
in \S\ref{subsec:outline}. The crux of the proof is demonstrating
injectivity of the closest point projection $(\pi)$ over the
collection of positive edges $(\Gamma_h)$. Regularity of the
parameterization and estimates for the Jacobian follow easily from
regularity of the curve $(\Gamma)$ and some straightforward
calculations. We prove injectivity by inspecting the restriction of
$\pi$ to each positive edge, then to pairs of intersecting positive
edges, and finally to connected components of $\Gamma_h$. That certain
angles in the mesh be acute has a simple geometric motivation (see
Fig. \ref{fig:line-idea}) and ensures injectivity over each positive
edge (\S\ref{sec:distances-angles},
\S\ref{sec:edge-injective}). Extending this to the entire set
$\Gamma_h$ is non-trivial, requiring some careful, albeit simple
topological arguments. It entails understanding how and how many
positive edges intersect at each vertex in $\Gamma_h$, leading us to
show in \S\ref{sec:on-gammah} that each connected component of
$\Gamma_h$ is a Jordan curve. We then show in \S\ref{sec:pi-injective}
that the restriction of $\pi$ to each connected component of
$\Gamma_h$ is a parameterization of a connected component of
$\Gamma$. Finally in \S\ref{sec:connected}, we establish a
correspondence between connected components of $\Gamma$ and
$\Gamma_h$.


\section{Preliminary definitions}
\label{sec:preliminaries}
In order to state our main result with the requisite assumptions, a
few definitions are essential. First, we define the family of planar
$C^2$-regular boundaries, the curves we consider for parameterization.
\begin{definition}[{\cite[def. 1.2]{henry2005perturbation}}]
  \label{def:Ck-boundary}
  A bounded open set $\Omega\subset{\mathbb R}^2$ has a $C^2$-regular
  boundary if there exists $\Psi\in C^2\left({\mathbb R}^2,{\mathbb
      R}\right)$ such that $\Omega = \left\{ x\in{\mathbb
      R}^2:\Psi(x)<0 \right\}$ and $\Psi(x) = 0$ implies
  $\left|\nabla\Psi\right|\geq1$.  We say that $\Omega$ is a
  $C^2$-regular domain and that $\partial\Omega$ is a $C^2$-regular
  boundary. The function $\Psi$ is called a defining function for
  $\Omega$.
\end{definition}

There are a few equivalent notions of $C^2$-regular boundaries (and
more generally $C^k$-regular boundaries), see
\cite{krantz1999geometry}.  For future reference, we note that each
connected component of a $C^2$-regular boundary is a Jordan curve with
bounded curvature.

We recall the definitions of the signed distance function and the
closest point projection for a curve $\Gamma$ that is the boundary of
an open and bounded set $\Omega$ in ${\mathbb R}^2$. The signed
distance to $\Gamma$ is the map $\phi:{\mathbb R}^2\rightarrow
{\mathbb R}$ defined as $-\min_{y\in\Gamma}d(\cdot ,y)$ over $\Omega$
and as $\min_{y\in\Gamma}d(\cdot ,y)$ elsewhere.  The function
$d(\cdot,\cdot)$ is the Euclidean distance in ${\mathbb R}^2$.  The
closest point projection $\pi$ onto $\Gamma$ is the map $\pi:{\mathbb
  R}^2\rightarrow \Gamma$ given by $\pi(\cdot ) =
\arg\min_{y\in\Gamma}d(\cdot ,y)$.

The following theorem quoted from \cite{henry2005perturbation} is a
vital result for our analysis.  It concerns the regularity of the maps
$\phi$ and $\pi$ for a $C^2$-regular boundary.  The theorem also shows
that $\phi$ is a defining function for a $C^2$-regular domain. In the
statement, the $\varepsilon$-ball centered at $x\in{\mathbb R}^2$ is
the set $B(x,\varepsilon):=\{y:d(x,y)<\varepsilon\}$ and the
$\varepsilon$-neighborhood of $A\subset{\mathbb R}^2$ is the set $B(A,
\varepsilon):=\cup_{x\in A}B(x,\varepsilon)$.
\begin{theorem}[{\cite[Theorem 1.5]{henry2005perturbation}}] If
  $\Omega\subset{\mathbb R}^2$ is an open set with a $C^2$-regular
  boundary, then there exists $r_n>0$ such that
  $\phi:B(\partial\Omega,r_n)\rightarrow (-r_n,r_n)$ and
  $\pi:B(\partial\Omega,r_n)\rightarrow \partial\Omega$ are well
  defined. The map $\phi$ is $C^2$ while $\pi$ is a $C^1$ retraction
  onto $\partial\Omega$. The mapping
  $x\mapsto(\phi(x),\pi(x)):B(\partial\Omega,r_n)\rightarrow
  (-r_n,r_n)\times\partial\Omega$ is a $C^1$-diffeomorphism with
  inverse $(\phi,\xi)\mapsto\xi+\phi\hat{N}(\xi):
  (-r_n,r_n)\times\partial\Omega\rightarrow B(\partial\Omega,r_n)$
  where $\hat{N}(\xi)$ is the unit outward normal to $\partial\Omega$
  at $\xi$. Furthermore, $\phi$ is the unique solution of
  $|\nabla\phi|=1$ in $B(\partial\Omega,r_n)$ with $\phi=0$ on
  $\partial\Omega$ and $\nabla \phi \cdot \hat{N}>0$ on
  $\partial\Omega$.
  \label{thm:nbd}
\end{theorem}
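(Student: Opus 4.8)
The plan is to identify $B(\partial\Omega,r_n)$ with a tubular neighborhood of the compact $C^2$ curve $\partial\Omega$ (compact because it is the boundary of a bounded set, $C^2$ because Definition~\ref{def:Ck-boundary} and the implicit function theorem make it a $C^2$ $1$-submanifold) and then read all four assertions off the induced normal coordinates. Write $\hat N(\xi)=\nabla\Psi(\xi)/|\nabla\Psi(\xi)|$ for the unit outward normal; this is a $C^1$ field on $\partial\Omega$ since $\Psi\in C^2$ and $|\nabla\Psi|\geq1$ there. Set $F(\xi,t):=\xi+t\hat N(\xi)$, a $C^1$ map on $\partial\Omega\times{\mathbb R}$. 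At each $(\xi,0)$ the differential of $F$ carries the tangent direction of $\partial\Omega$ to itself and $\partial_t$ to $\hat N(\xi)$, and since tangent and normal span ${\mathbb R}^2$ it is invertible there; by the inverse function theorem $F$ is a local $C^1$-diffeomorphism near every $(\xi,0)$. A standard compactness argument upgrades this to a uniform radius: if no $\delta$ worked, would-be collisions $F(\xi_k,t_k)=F(\xi_k',t_k')$ with $t_k,t_k'\to0$ have subsequences of feet converging to a common point of $\partial\Omega$, contradicting local injectivity there. Hence there is $\delta>0$ with $F$ a $C^1$-diffeomorphism of $\partial\Omega\times(-\delta,\delta)$ onto an open neighborhood of $\partial\Omega$.

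Next I would fix $r_n\in(0,\delta]$ small enough that in addition $\mathrm{sign}\,\Psi(F(\xi,t))=\mathrm{sign}\,t$ for all $\xi$ and all $0<|t|<r_n$, which is possible uniformly because $\Psi(F(\xi,t))=t|\nabla\Psi(\xi)|+o(t)$ with $|\nabla\Psi|\geq1$. Given $x\in B(\partial\Omega,r_n)$, compactness produces a nearest point $\xi\in\partial\Omega$, the first-order condition gives $x-\xi\perp T_\xi\partial\Omega$, so $x=F(\xi,t)$ with $|t|=d(x,\xi)<r_n$; thus $x$ lies in the diffeomorphic image and, by injectivity of $F$, $\xi$ is the \emph{unique} nearest point. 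Consequently $\pi$ is well defined and equals the foot-point component of $F^{-1}$ (in particular $\pi$ is $C^1$ and restricts to the identity on $\partial\Omega$, i.e.\ is a retraction), while the sign normalization identifies $\phi(x)=t$, the normal component of $F^{-1}$, with the signed distance of the statement. Hence $(\phi,\pi)=F^{-1}:B(\partial\Omega,r_n)\to(-r_n,r_n)\times\partial\Omega$ is a $C^1$-diffeomorphism with inverse $(\phi,\xi)\mapsto\xi+\phi\hat N(\xi)$.

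It remains to improve ``$\phi\in C^1$'' (all that $F^{-1}$ gives directly) to ``$\phi\in C^2$''. The key identity is $\nabla\phi=\hat N\circ\pi$: differentiating $\phi^2=|\,\cdot-\pi(\cdot)\,|^2$ and using that $D\pi(x)$ takes values in $T_{\pi(x)}\partial\Omega$, which is orthogonal to $x-\pi(x)$, yields $\phi\nabla\phi=x-\pi(x)=\phi\,\hat N(\pi(x))$, so $\nabla\phi=\hat N\circ\pi$ off $\partial\Omega$ and, by continuity, everywhere. Since $\hat N$ is $C^1$ on the $C^2$ curve and $\pi$ is $C^1$, $\nabla\phi$ is $C^1$, hence $\phi\in C^2$; also $|\nabla\phi|=|\hat N|=1$ throughout and $\nabla\phi\cdot\hat N=1>0$ on $\partial\Omega$, so $\phi$ is a solution of the stated eikonal problem. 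For uniqueness, if $\psi$ solves $|\nabla\psi|=1$ with $\psi=0$ and $\nabla\psi\cdot\hat N>0$ on $\partial\Omega$, then $\psi|_{\partial\Omega}=0$ forces $\nabla\psi$ to be orthogonal to $T_\xi\partial\Omega$, and the norm and sign conditions give $\nabla\psi(\xi)=\hat N(\xi)$; the method of characteristics for the eikonal equation (characteristics are straight lines along which $\nabla\psi$ is constant and $\psi$ grows at unit speed) shows the characteristic through $\xi$ is exactly $t\mapsto\xi+t\hat N(\xi)$ with $\psi(\xi+t\hat N(\xi))=t=\phi(\xi+t\hat N(\xi))$, and since these normal rays foliate $B(\partial\Omega,r_n)$ we get $\psi\equiv\phi$.

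I expect the main obstacle to be the passage from the purely local data of the inverse function theorem to a single radius $r_n$ that simultaneously guarantees global injectivity of $F$, well-definedness (uniqueness) of the nearest point, and the sign normalization of $\phi$ — the compactness step is where the geometry of $\partial\Omega$ genuinely enters. After that the argument is essentially bookkeeping in normal coordinates, the only other delicate point being the ``one-derivative gain'' identity $\nabla\phi=\hat N\circ\pi$ that upgrades $\phi$ from $C^1$ to $C^2$.
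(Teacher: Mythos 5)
The paper does not supply its own proof of this statement: Theorem \ref{thm:nbd} is quoted verbatim from Henry (reference \cite{henry2005perturbation}, Theorem 1.5) and used as a black box throughout. There is therefore no ``paper's proof'' to compare against. Evaluated on its own terms, your argument is the standard tubular-neighborhood construction and it is essentially correct: $F(\xi,t)=\xi+t\hat N(\xi)$ is a $C^1$ local diffeomorphism near $t=0$ by the inverse function theorem, compactness of $\partial\Omega$ yields a uniform radius on which $F$ is globally injective, and the first-order condition at a nearest point shows $B(\partial\Omega,r_n)$ sits inside the diffeomorphic image, giving well-definedness and $C^1$-regularity of $(\phi,\pi)=F^{-1}$. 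The one-derivative gain $\nabla\phi=\hat N\circ\pi$ (obtained by differentiating $\phi^2=|x-\pi(x)|^2$ and using $\operatorname{Im}D\pi(x)\subset T_{\pi(x)}\partial\Omega\perp(x-\pi(x))$) is exactly the right mechanism for upgrading $\phi$ from $C^1$ to $C^2$.

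Two small points you should tighten. First, in the compactness step you should rule out the degenerate case $\xi_k=\xi_k'$ with $t_k\neq t_k'$ (it cannot occur since $F(\xi,\cdot)$ is injective for fixed $\xi$), so that the accumulating feet really are distinct and local injectivity near $(\xi,0)$ is genuinely contradicted. Second, the uniqueness assertion for the eikonal problem tacitly requires a regularity class: the characteristic argument that $\nabla\psi$ is constant along rays needs $\psi$ to be at least $C^{1,1}$ (or $C^2$); you should state the class of competitors, since a bare $C^1$ solution does not obviously admit the characteristic ODE with unique solutions. These are cosmetic; the structure of the argument, in particular the identification of the compactness step as the point where the geometry genuinely enters, is right.
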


\noindent In Theorem \ref{thm:nbd}, by saying that $\phi$ and $\pi$
are well defined over $B(\partial \Omega, r_n)$, we mean that these
maps are defined and have a unique value at each point in $B(\partial
\Omega, r_n)$. The following proposition follows from \cite[\S
14.6]{gilbarg2001elliptic}. A simple derivation specific to planar
curves can be found in \cite{rangarajan2011parameterization}.
\begin{proposition}
  Let $\Gamma\subset {\mathbb R}^2$ be a $C^2$-regular boundary with
  signed distance function $\phi$, closest point projection $\pi$,
  signed curvature $\kappa_s$, and unit tangent $\hat{T}$. If $p\in
  B(\Gamma, r_n)$ and $|\phi(p)\kappa_s(\pi(p))|<1$, then
  \begin{subequations}
    \label{eq:gradpi-gradgradphi-estimate}
    \begin{align}
      \nabla \pi(p) &= \frac{\hat{T}(\pi(p))\otimes\hat{T}(\pi(p))}
      {1-\phi(p)\,\kappa_s(\pi(p))}, 
      \label{eq:gradpi} \\
      \text{and} \quad \nabla \nabla \phi(p) &=
      -\kappa_s(\pi(p))\nabla \pi(p). \label{eq:gradgradphi}
    \end{align}
  \end{subequations}
  \label{prop:gradpi}
\end{proposition}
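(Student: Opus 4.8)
The plan is to derive both identities from the explicit change of variables supplied by Theorem~\ref{thm:nbd}, combined with a local arc-length parameterization of $\Gamma$. Fix $p\in B(\Gamma,r_n)$ with $\left|\phi(p)\,\kappa_s(\pi(p))\right|<1$, and set $\xi_0:=\pi(p)$ and $t_0:=\phi(p)$. Since $\Gamma$ is $C^2$-regular, near $\xi_0$ the connected component of $\Gamma$ through $\xi_0$ is a $C^2$ embedded arc; I would parameterize it by arc length $s\mapsto\gamma(s)$ on an interval $I$ about some $s_0$ with $\gamma(s_0)=\xi_0$, writing $\hat{T}(s)=\gamma'(s)$ and $\hat{N}(s)$ for the unit tangent and outward normal at $\gamma(s)$, so that $\hat{T},\hat{N}\in C^1(I)$ and $\kappa_s$ is continuous on $I$. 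I would use the Frenet relations $\hat{T}'=\kappa_s\hat{N}$ and $\hat{N}'=-\kappa_s\hat{T}$, which pin down the sign convention for $\kappa_s$ consistent with \eqref{eq:gradpi-gradgradphi-estimate}. Substituting $\xi=\gamma(s)$ into the inverse diffeomorphism of Theorem~\ref{thm:nbd} shows that $\tilde{F}(t,s):=\gamma(s)+t\,\hat{N}(s)$, defined on $(-r_n,r_n)\times I$, is, on a neighborhood of $p$, the inverse of the map $x\mapsto(\phi(x),\sigma(x))$, where $\sigma(x)$ is the arc-length coordinate of $\pi(x)$; in particular $p=\tilde{F}(t_0,s_0)$ and $\pi=\gamma\circ\sigma$ near $p$.

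The next step is to differentiate $\tilde{F}$. Its partial derivatives at $(t_0,s_0)$ are $\partial_t\tilde{F}=\hat{N}(s_0)$ and $\partial_s\tilde{F}=\gamma'(s_0)+t_0\,\hat{N}'(s_0)=\bigl(1-t_0\,\kappa_s(s_0)\bigr)\hat{T}(s_0)$. Because $\hat{T}(s_0)$ and $\hat{N}(s_0)$ are orthonormal and $1-t_0\,\kappa_s(s_0)\neq0$ by the hypothesis, $D\tilde{F}(t_0,s_0)$ is invertible; inverting it in the orthonormal frame $\{\hat{T}(s_0),\hat{N}(s_0)\}$ gives
\[
\nabla\phi(p)=\hat{N}(s_0)=\hat{N}(\pi(p)),\qquad
\nabla\sigma(p)=\frac{\hat{T}(s_0)}{1-t_0\,\kappa_s(s_0)}=\frac{\hat{T}(\pi(p))}{1-\phi(p)\,\kappa_s(\pi(p))}.
\]
Differentiating $\pi=\gamma\circ\sigma$ by the chain rule then gives $\nabla\pi(p)=\gamma'(s_0)\otimes\nabla\sigma(p)=\hat{T}(\pi(p))\otimes\hat{T}(\pi(p))/\bigl(1-\phi(p)\,\kappa_s(\pi(p))\bigr)$, which is \eqref{eq:gradpi}. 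For \eqref{eq:gradgradphi}, I would observe that the same computation yields $\nabla\phi=\hat{N}\circ\pi$; since $\phi\in C^2$ by Theorem~\ref{thm:nbd} and $\hat{N}$ is $C^1$ along $\Gamma$, differentiating this identity (again through $\pi=\gamma\circ\sigma$, now using $\hat{N}'=-\kappa_s\hat{T}$) gives $\nabla\nabla\phi(p)=\hat{N}'(s_0)\otimes\nabla\sigma(p)=-\kappa_s(\pi(p))\,\nabla\pi(p)$ after substituting \eqref{eq:gradpi}.

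I do not expect a genuine obstacle here; the computation is essentially bookkeeping, and the only points that need attention are the following. First, $C^2$-regularity of $\Gamma$ is exactly what makes arc length a $C^2$ parameterization with $\hat{T},\hat{N}\in C^1$ and $\kappa_s$ continuous, so that all derivatives above are legitimate. Second, $\pi$ is only $C^1$, so the Hessian of $\phi$ must be extracted from the first-order identity $\nabla\phi=\hat{N}\circ\pi$ rather than from a (nonexistent) second derivative of $\pi$. Third, the hypothesis $\left|\phi(p)\,\kappa_s(\pi(p))\right|<1$ is precisely what guarantees $1-\phi(p)\,\kappa_s(\pi(p))\neq0$, making $D\tilde{F}(t_0,s_0)$ invertible and the denominators in \eqref{eq:gradpi-gradgradphi-estimate} meaningful. (Alternatively, one could quote the classical formulas for derivatives of the distance function from \cite[\S14.6]{gilbarg2001elliptic}, but the direct arc-length computation above is self-contained.)
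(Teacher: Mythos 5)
The paper does not actually prove this proposition: it simply cites \cite[\S 14.6]{gilbarg2001elliptic} for the general hypersurface version (phrased there via principal-curvature coordinates and the Hessian of the distance function) and points to \cite{rangarajan2011parameterization} for a ``simple derivation specific to planar curves.'' So there is no in-paper argument to compare against, but your self-contained derivation is correct and is essentially the planar arc-length specialization that the authors defer to. Your sign conventions check out: with $\hat N=\nabla\phi$ outward and $\{\hat T,\hat N\}$ right-handed, the Frenet system $\hat T'=\kappa_s\hat N$, $\hat N'=-\kappa_s\hat T$ gives $\kappa_s=-1/R$ for a disk of radius $R$, and then $1-\phi\kappa_s=d/R$ for a point at distance $d$ from the center, so \eqref{eq:gradpi} correctly predicts contraction from outside and expansion from inside. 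The inverse-function step is also fine: in the orthonormal frame $\{\hat T(s_0),\hat N(s_0)\}$, $D\tilde F(t_0,s_0)$ has columns $\hat N(s_0)$ and $(1-t_0\kappa_s(s_0))\hat T(s_0)$, invertible exactly because of the hypothesis $|\phi(p)\kappa_s(\pi(p))|<1$, and inverting rows off $\nabla\phi=\hat N\circ\pi$ and $\nabla\sigma=(\hat T\circ\pi)/(1-\phi\,\kappa_s\circ\pi)$. Your remark that the Hessian of $\phi$ must be read off from the first-order identity $\nabla\phi=\hat N\circ\pi$ rather than by differentiating $\pi$ twice is exactly the right care to take, since Theorem~\ref{thm:nbd} only gives $\pi\in C^1$ but $\phi\in C^2$. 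The Gilbarg--Trudinger route proves the $n$-dimensional statement once and for all (eigenvalues $\kappa_i/(1-\phi\kappa_i)$ of $-\nabla\nabla\phi$ in principal directions), which is what you'd want for generality; your direct computation buys a short, self-contained proof at the cost of being tied to the planar case, which is all this paper needs.
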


For parameterizing $C^2$-regular boundaries, we will consider
background meshes that are triangulations of polygonal domains
(cf. \cite[Chapter 4]{lai2007spline}). We mention the related
terminology and notation used in the remainder of the article. With
triangulation ${\cal T}_h$, we associate a pairing $(V,C)$ of a vertex
list $V$ that is a finite set of points in ${\mathbb R}^2$ and a
connectivity table $C$ that is a collection of ordered $3$-tuples in
$V\times V\times V$ modulo permutations. A vertex in ${\cal T}_h$ is
thus an element of $V$ (and hence a point in ${\mathbb R}^2$). An edge
in ${\cal T}_h$ is a closed line segment joining two vertices of a
member of $C$. The relative interior of an edge $e_{pq}$ with
endpoints (or vertices) $p$ and $q$ is the set
$\textbf{ri}\,(e_{pq})=e_{pq}\setminus\{p,q\}$.

A triangle $K$ in ${\cal T}_h$, denoted $K\in{\cal T}_h$, is the
interior of the triangle in ${\mathbb R}^2$ with vertices given by its
connectivity $\hat{K}\in C$. Frequently, we will not distinguish
between $K$ and $\hat{K}$ unless the distinction is essential. We
refer to the diameter of $K$ by $h_K$ and the diameter of the largest
ball contained in $\overline{K}$ by $\rho_K$. The ratio
$\sigma_K:=h_K/\rho_K$ is called the shape parameter of $K$
\cite[Chapter 3]{lai2007spline}. Later, we will invoke the fact that
$\sigma_K\geq \sqrt{3}$ with equality holding for equilateral
triangles.

To consider curves immersed in background triangulations, we introduce
the following terminology.
\begin{definition}
  \label{def:pos-cut} 
  Let $\Gamma\subset {\mathbb R}^2$ be a $C^2$-regular boundary with
  signed distance function $\phi$ and let ${\cal T}_h$ be a
  triangulation of a polygon in ${\mathbb R}^2$.
  \begin{romannum}
  \item We say that $\Gamma$ is immersed in ${\cal T}_h$ if $\Gamma
    \subset \text{int}\left(\,\cup_{K\in{\cal T}_h}\,\overline{K}
      \,\right)$.
  \item A triangle in ${\cal T}_h$ is \textit{positively cut} by
    $\Gamma$ if $\phi\geq 0$ at precisely two of its vertices.
  \item An edge in ${\cal T}_h$ is a positive edge if $\phi\geq 0$ at
    both of its vertices and if it is an edge of a triangle that is
    positively cut by $\Gamma$.
  \item The proximal vertex of a triangle positively cut by $\Gamma$
    is the vertex of its positive edge closest to $\Gamma$. When both
    vertices of the positive edge are equidistant from $\Gamma$, the
    one containing the smaller interior angle is designated to be the
    proximal vertex. If the angles are equal as well, either vertex of
    the positive edge can be assigned the proximal vertex.
  \item The conditioning angle of a triangle positively cut by
    $\Gamma$ is the interior angle at its proximal vertex.
  \item Let $K, K^\text{adj}\in {\cal T}_h$ be such that $K$ is
    positively cut by $\Gamma$, $K$ has positive edge $e$, $e\cap
    \Gamma\neq \emptyset$ and $\overline{K}\cap
    \overline{K^\text{adj}}=e$.  Then, the angle adjacent to the
    positive edge of $K$, denoted $\vartheta_K^\text{adj}$, is defined
    as the minimum of the interior angles in $K^\text{adj}$ at the
    vertices of $e$.
    
  \end{romannum}
\end{definition}

\section{Main result}
\label{sec:main-result} 
The main result of this article is the following.
\begin{theorem}
  Consider a $C^2$-regular boundary $\Gamma\subset {\mathbb R}^2$ with
  signed distance function $\phi$, closest point projection $\pi$ and
  curvature $\kappa$.  Let $\Gamma$ be immersed in a triangulation
  ${\cal T}_h$. Denote the union of positive edges in ${\cal T}_h$ by
  $\Gamma_h$ and the collection of triangles positively cut by
  $\Gamma$ in ${\cal T}_h$ by ${\cal P}_h$. For each $K\in {\cal
    P}_h$, let
  \begin{align*}
    \vartheta_K &:= \text{conditioning angle of $K$}, \\
    \vartheta_K^\text{adj} &:= \text{angle adjacent to positive edge of
      $K$ when defined}, \\
  M_K &:= \max_{\overline{B(K,h_K)}\cap\Gamma} \kappa \quad
  \text{and} \quad
  C_K^h := \frac{M_K}{1-M_Kh_K}.
\end{align*}
Assume that for each connected component $\gamma$ of $\Gamma$,
$\gamma_h:=\{x\in \Gamma_h\,:\,\pi(x)\in\gamma\}\neq \emptyset$.  If
for each $K\in {\cal P}_h$, we have
\begin{subequations}
  \label{eq:mesh-restrictions}
  \begin{align} 
    h_K&<r_n, \label{eq:h-global} \\
    \vartheta_K & < 90^\circ ~\,  \label{eq:cond-angle}\\
    0<\sigma_KC_K^hh_K &< \min\left\{\cos\vartheta_K,
      \sin\frac{\vartheta_K}{2}\right\}, \label{eq:sigmaCh}\\
    \text{and} \quad C_K^hh_K &< \frac{1}{2}\sin\vartheta_K^\text{adj}
    \quad \text{whenever}~\vartheta_K^\text{adj}~\text{is defined,} \label{eq:Ch}
    \end{align}
  \end{subequations}
  then
  \begin{romannum}
  \item each positive edge in $\Gamma_h$ is an edge of precisely one
    triangle in ${\cal P}_h$,
  \item for each positive edge $e\subset \Gamma_h$, $\pi$ is a
    $C^1$-diffeomorphism over $\textbf{ri}\,(e\,)$,
  \item if $K=(p,q,r)\in{\cal P}_h$ has positive edge $e_{pq}$, then
    \begin{align}
      -&C_K^hh_K^2  < \phi(x) \leq h_K \quad  \forall x\in
      e_{pq}. \label{eq:dist-estimate}
    \end{align}
    The Jacobian $J$ of the map $\pi:\textbf{ri}\,(e_{pq}) \rightarrow
    \Gamma$ satisfies
    \begin{align}
      0<\frac{\sin\left(\beta_K-\vartheta_K \right)}{1+M_Kh_K} 
      \leq J(x)&=\left|\nabla\pi(x)\cdot \frac{(p-q)}{d(p,q)}\right|
      \leq \frac{1}{1-M_Kh_K} \quad \forall
      x\in \textbf{ri}\,(e_{pq}),\label{eq:jac-estimate} \\
      \text{where} \quad 
      \cos\beta_K &:= C_K^h\sigma_Kh_K-\eta_K,
      ~\beta_K\in\left[0^\circ,180^\circ\right], \label{eq:def-betaK}\\
      \eta_K &:= \frac{\min\{\phi(p),
        \phi(q)\}-\phi(r)}{h_K}. \label{eq:def-etaK} 
    \end{align}
  \item The map $\pi:\Gamma_h\rightarrow \Gamma$ is a
    homeomorphism. In particular, $\gamma_h$ as defined above is a
    simple, closed curve.
  \end{romannum} 
  \label{thm:parameterization}
\end{theorem}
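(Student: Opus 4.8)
The plan is to prove the four claims in the order they are stated, since each one feeds into the next. I'll treat the whole argument as a study of the closest point projection $\pi$ localized first to a single positive edge, then to pairs of positive edges sharing a vertex, and finally to connected components of $\Gamma_h$.

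\paragraph{Step 1: one triangle per positive edge, and distance estimates.}
First I would fix a positively cut triangle $K=(p,q,r)\in\mathcal P_h$ with positive edge $e_{pq}$, so $\phi(p),\phi(q)\ge 0>\phi(r)$. Since $h_K<r_n$ by \eqref{eq:h-global}, all of $\overline K$ (indeed $\overline{B(K,h_K)}$) lies in the tubular neighborhood $B(\Gamma,r_n)$ where Theorem~\ref{thm:nbd} and Proposition~\ref{prop:gradpi} apply, and $\kappa\le M_K$ there. The upper bound $\phi(x)\le h_K$ on $e_{pq}$ is immediate from $\phi$ being $1$-Lipschitz (from $|\nabla\phi|=1$) together with $0>\phi(r)$ and $h_K=\mathrm{diam}(K)$. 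For the lower bound $\phi(x)>-C_K^h h_K^2$ I would use a second-order Taylor expansion of $\phi$ along the segment from the proximal vertex to $x$, controlling $\nabla\nabla\phi$ by \eqref{eq:gradgradphi} and $M_K$; the bound $|\phi|<1/M_K$ inside the tube (which makes $C_K^h$ finite, using \eqref{eq:sigmaCh}) is what lets the expansion close. This gives \eqref{eq:dist-estimate}. Claim (i) then follows: if a positive edge $e$ were shared by two triangles $K,K'\in\mathcal P_h$, then the third vertices $r,r'$ would lie on opposite sides of $e$; but the distance estimate forces $\Gamma$ to pass very close to $e$, and the proximal-vertex/conditioning-angle geometry (the acuteness \eqref{eq:cond-angle} plus the smallness \eqref{eq:sigmaCh}) pins the configuration so that only one side is consistent — this is where condition \eqref{eq:Ch} on $\vartheta_K^{\mathrm{adj}}$ is used to rule out the opposite orientation.

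\paragraph{Step 2: injectivity and regularity on a single edge.}
Next, for a fixed positive edge $e_{pq}$ I would show $\pi|_{\mathbf{ri}(e_{pq})}$ is injective with nonvanishing Jacobian, which then upgrades to a $C^1$-diffeomorphism onto its image via Theorem~\ref{thm:nbd} (which already tells us $\pi$ is $C^1$ on $B(\Gamma,r_n)$, and the mapping $x\mapsto(\phi(x),\pi(x))$ is a diffeomorphism). The key computation is \eqref{eq:jac-estimate}: parametrize $e_{pq}$ linearly, write the unit direction $u=(p-q)/d(p,q)$, and use \eqref{eq:gradpi} to get $\nabla\pi(x)\cdot u = \dfrac{(\hat T\cdot u)\,\hat T}{1-\phi(x)\kappa_s(\pi(x))}$ so that $J(x)=\dfrac{|\hat T(\pi(x))\cdot u|}{|1-\phi(x)\kappa_s(\pi(x))|}$. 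The denominator is between $1-M_Kh_K$ and $1+M_Kh_K$ by the distance estimate and $h_K<r_n$; the upper bound on $J$ is then $1/(1-M_Kh_K)$. For the lower bound I need $|\hat T(\pi(x))\cdot u|$ bounded away from $0$ — equivalently, the tangent to $\Gamma$ is never (nearly) perpendicular to the edge. This is exactly the geometric role of the conditioning angle: the acuteness of $\vartheta_K$ together with the curvature-smallness \eqref{eq:sigmaCh} forces $\Gamma$ to cross $e_{pq}$ at an angle bounded below, and the bookkeeping produces the quantity $\beta_K$ defined in \eqref{eq:def-betaK}–\eqref{eq:def-etaK}, giving $|\hat T\cdot u|\ge\sin(\beta_K-\vartheta_K)>0$. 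Nonvanishing $J$ plus the diffeomorphism structure of Theorem~\ref{thm:nbd} gives injectivity on $\mathbf{ri}(e_{pq})$, hence (ii) and (iii). (I expect one has to check $\beta_K>\vartheta_K$, which is where $\cos\vartheta_K$ appears on the right of \eqref{eq:sigmaCh}.)

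\paragraph{Step 3: from edges to components of $\Gamma_h$; the global homeomorphism.}
This is the hard part. Having controlled $\pi$ on each edge, I would glue: (a) analyze vertices of $\Gamma_h$ — show each vertex of $\Gamma_h$ belongs to exactly two positive edges, so $\Gamma_h$ is a disjoint union of closed polygonal curves, and (using the distance estimate and the angle conditions to control how $\Gamma$ threads between consecutive edges) show each such polygonal loop is a simple closed (Jordan) curve; (b) show $\pi$ restricted to pairs of adjacent positive edges remains injective — two adjacent edges meet at a vertex $v$ with $\phi(v)\ge0$, and the images $\pi(e)$, $\pi(e')$ can only overlap near $\pi(v)$, where injectivity of the combined map follows by comparing the orientations at which the two edges cross $\Gamma$ (again the conditioning-angle acuteness prevents a "fold"); (c) conclude $\pi$ restricted to a connected component $\gamma_h$ of $\Gamma_h$ is a continuous injection from a Jordan curve into $\Gamma$, hence a homeomorphism onto its image, which is a connected component $\gamma$ of $\Gamma$ (it is open, being a local diffeomorphism image, and closed, being a continuous image of a compact set, in the connected Jordan curve $\gamma$). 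Then, using the hypothesis that every component $\gamma$ of $\Gamma$ has $\gamma_h\neq\emptyset$, I set up the correspondence $\gamma\leftrightarrow\gamma_h$ and check it is a bijection between components: surjectivity onto components of $\Gamma$ is the nonemptiness hypothesis plus step (c), and injectivity follows because distinct components of $\Gamma_h$ have disjoint images (they are disjoint Jordan curves whose $\pi$-images are full components of $\Gamma$, and two full distinct components are disjoint; if two components of $\Gamma_h$ mapped onto the same $\gamma$ their images would both be all of $\gamma$, contradicting injectivity of $\pi$ along a path in $\Gamma_h$ connecting near-images — here I'd use that $\Gamma_h$ sits in the tube $B(\Gamma,r_n)$ where $\pi$ is globally single-valued). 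Assembling: $\pi:\Gamma_h\to\Gamma$ is a continuous bijection from a compact space to a Hausdorff space, hence a homeomorphism, and each $\gamma_h$ is a simple closed curve, giving (iv). The main obstacle is step 3(a)–(b): the purely combinatorial-topological claim that the positive edges around each vertex assemble into a single non-self-intersecting loop per component, for which the acute-angle hypotheses must be leveraged carefully to exclude the pathological local configurations.
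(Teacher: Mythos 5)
Your overall architecture---per-edge injectivity, then pairwise, then gluing to components---is the same local-to-global strategy the paper follows, and your per-edge analysis (Steps 1--2) is essentially correct: the Jacobian bound via \eqref{eq:gradpi}, the lower bound $|\hat T(\pi(x))\cdot u|\ge\sin(\beta_K-\vartheta_K)$, and the need to verify $\beta_K>\vartheta_K$ from \eqref{eq:sigmaCh} all match the paper's Lemmas~\ref{lem:edge-injective} and~\ref{lem:jac} and Proposition~\ref{prop:angle-normal}. One small misattribution in Step~1: uniqueness of the positively cut triangle per edge (part~(i), Lemma~\ref{lem:uniq-id}) is proved in the paper by a clean orientation argument (Proposition~\ref{lem:top}: $\mathrm{sgn}(\hat T(\pi(a))\cdot\hat U_{ab})=\mathrm{sgn}(\hat U_{ca}\cdot\hat U_{ab}^\perp)$, so two candidate opposite vertices would have to lie on the same side of $e_{ab}$), and does \emph{not} use \eqref{eq:Ch}; that hypothesis only enters much later, in the proof of Lemma~\ref{lem:connected}.

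The serious gap is in your Step~3(c). You pass from ``$\pi$ is injective on each pair of adjacent positive edges'' (and each component $\gamma_h$ of $\Gamma_h$ a Jordan curve) directly to ``$\pi|_{\gamma_h}$ is a continuous injection into $\gamma$.'' That inference is false as stated: a continuous map that is locally injective on a circle can still wind around the target circle several times (a covering map of degree $>1$), so pairwise/local injectivity does not upgrade to global injectivity. The openness/closedness argument you sketch only gives surjectivity of $\pi(\gamma_h)$ onto $\gamma$, not injectivity. This is precisely the content of the paper's Propositions~\ref{prop:pv} and~\ref{prop:oneloop}: it introduces the finite set $P_0=\{p\in[0,1):\pi(\alpha(p))=\pi(v_0)\}$, shows $\pi\circ\alpha$ is a bijection onto $\gamma$ on each arc $[p_i,p_{i+1})$ (this \emph{is} where pairwise injectivity is used, via a monotonicity argument on $\beta^{-1}\circ\pi\circ\alpha$), and then proves $|P_0|=1$ (winding number $1$) by an intermediate-value argument on the difference of signed-distance profiles $\Psi_0-\Psi_k$. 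Without something of this kind, global injectivity on a loop simply does not follow.

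The final step of your Step~3 also has a gap. Your argument that two distinct components of $\Gamma_h$ cannot project onto the same component $\gamma$ of $\Gamma$ (``contradicting injectivity of $\pi$ along a path in $\Gamma_h$ connecting near-images'') is circular: the components are disjoint, so no such path exists, and ruling out the two-component scenario is exactly what needs to be proved, not assumed. The paper's Lemma~\ref{lem:connected} handles this with a genuine geometric argument: order the candidate components of $\gamma_h$ by their signed-distance profile $\Psi_i$, place them on the ``outer'' side $\Omega_1^+$ of the nearest loop via Jordan-curve separation (Corollary~\ref{cor:compinomega+} and Propositions~\ref{prop:lminus},~\ref{prop:lplus},~\ref{prop:triside-c}), and show that any positively cut triangle attached to a farther loop would have to contain a vertex $w$ with $\phi(w)>0$, contradicting $\phi(w)<0$; this is exactly the step where \eqref{eq:Ch} (bounding $C_K^hh_K$ by $\tfrac12\sin\vartheta_K^{\mathrm{adj}}$) is actually needed. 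You should supply a real mechanism of this sort for the correspondence to hold.
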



\subsection{Discussion of the statement}
\label{subsec:discussion-statement}
With $\Gamma$ and $\Gamma_h$ as in the statement, Theorem
\ref{thm:parameterization} asserts sufficient conditions under which
$\pi:\Gamma_h \rightarrow \Gamma$ is a homeomorphism. The statement of
the theorem extends also to the case when edges in $\Gamma_h$ are
identified using the function $-\phi$ instead of $\phi$. This
corresponds to selecting the collection of \textit{negative} edges for
parameterizing $\Gamma$. Of course, a different collection of angles
are required to be acute. If triangles in the vicinity of the curve
are all acute angled, the theorem shows that there are two different
collections of edges homeomorphic to $\Gamma$.

We make two important assumptions on the background mesh; we briefly
examine them and discuss how they can be satisfied in practice in
\S\ref{subsec:h-bound}.  The first assumption is, expectedly, on the
size of triangles near $\Gamma$, as conveyed by conditions
\eqref{eq:h-global}, 
\eqref{eq:sigmaCh} and \eqref{eq:Ch}. For instance, if the mesh size
is too large, then $\pi$ may not even be single valued over
$\Gamma_h$.

Assumption \eqref{eq:cond-angle}, which we term the {\em acute
  conditioning angle assumption}, is perhaps less intuitive. For once
the set $\Gamma_h$ has been identified, the angles that positive edges
make with other edges in the background mesh ${\cal T}_h$ are
irrelevant. Rather, the rationale behind \eqref{eq:cond-angle} is that
it provides a means to control the orientation of positive edges with
respect to local normals to the curve. We explain this idea below
using a simple example.

It is worth emphasizing that the assumptions on the background mesh in
\eqref{eq:mesh-restrictions} are not very restrictive principally
because there is no conformity required with $\Gamma$.  Besides, the
region triangulated by ${\cal T}_h$ can be quite arbitrary and need
only contain $\Gamma$ in the sense of definition
\ref{def:pos-cut}(i). In particular, while considering ambient
triangulations of larger sets, the restrictions on the size, quality
and angles stemming from \eqref{eq:mesh-restrictions} apply only to a
subset of the collection of triangles intersected by $\Gamma$, namely
positively cut triangles and triangles having positive edges that are
intersected by $\Gamma$.

Finally, we mention that Theorem \ref{thm:parameterization} guarantees
a parameterization for $\Gamma$ provided the collection of triangles
positively cut by each of its connected components is non-empty. This
is apparent from the fact that all restrictions on the mesh size and
angles in \eqref{eq:mesh-restrictions} apply only to positively cut
triangles and triangles having positive edges that are intersected by
$\Gamma$.  For instance, if a connected component $\gamma$ of $\Gamma$
is a contained in the interior of a triangle in ${\cal T}_h$, then no
triangle is positively cut by it. Of course, it is possible for the
collection of triangles positively cut by $\gamma$ to be empty in a
multitude of ways.  In principle, sufficient conditions are easily
identified to ensure at least one triangle is positively cut by each
connected component of $\Gamma$.
In practice however, it is much simpler to inspect the sign of $\phi$
at the vertices of triangles and verify the presence of positively cut
triangles rather than check such conditions.

\subsubsection{The acute conditioning angle assumption}
\label{subsubsec:discussion-angle-hypothesis}
\begin{SCfigure}
  \centering 
  \includegraphics[scale=0.8]{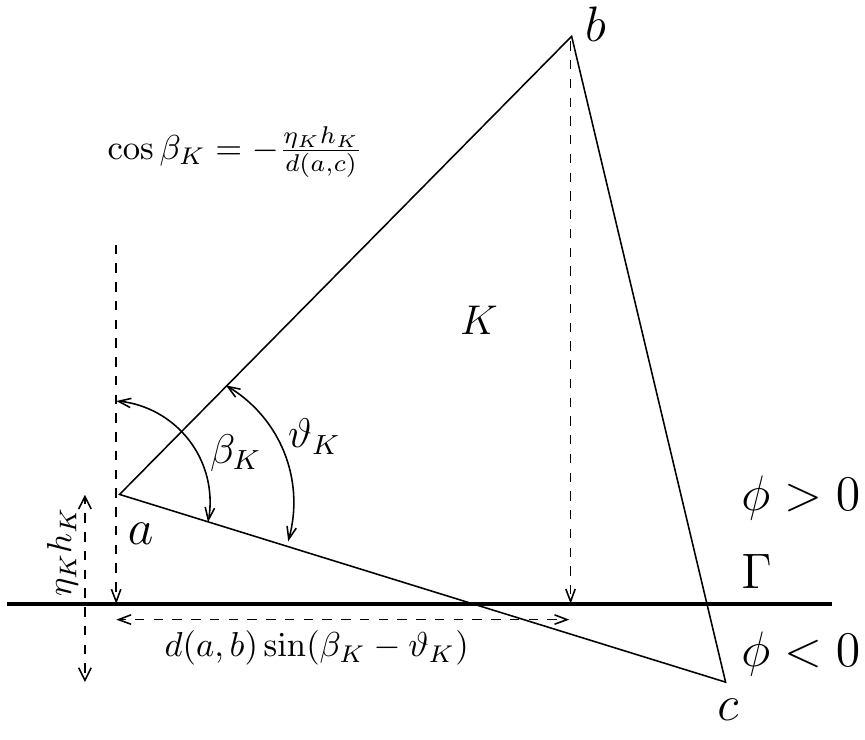}
  \caption{Illustration to explain the rationale behind the acute
    conditioning angle assumption. Triangle $K$ is positively cut by
    $\Gamma$. Although $\beta_K>90^\circ$, it can be arbitrarily close
    to $90^\circ$ by changing the locations of vertices $b$ and
    $c$. Requesting $\vartheta_K<90^\circ$ ensures that
    $\beta_K-\vartheta_K>0^\circ$ always and hence that $\pi(e_{ab})$
    has non-zero length.}
  \label{fig:line-idea}
\end{SCfigure}
Consider a locally straight curve $\Gamma$ as shown in
Fig. \ref{fig:line-idea}. Triangle $K$ shown in the figure is
positively cut by $\Gamma$, has positive edge $e_{ab}$ and proximal
vertex $a$.  Abusing the definition in \eqref{eq:def-betaK}, we have
$\cos\beta_K=-\eta_Kh_K/d(a,c)$ as indicated in the figure (the two
definitions coincide if the length of the edge $e_{ac}$ is $h_K$).
The projection of $e_{ab}$ onto $\Gamma$ has length
$d(a,b)\sin(\beta_K-\vartheta_K)$. For $\pi$ to be injective over
$e_{ab}$, we need to ensure that
$0^\circ<\beta_K-\vartheta_K<180^\circ$.  Even though the angle
$\beta_K$ depicted in the figure is strictly larger than $90^\circ$,
it can be made arbitrarily close to $90^\circ$ by altering the
locations of vertices $a$ and $c$. Therefore, we request that the
conditioning angle $\vartheta_K$ be smaller than $90^\circ$ thereby
ensuring $\beta_K-\vartheta_K>0^\circ$. The assumptions $\phi(a)\leq
\phi(b)$ and $\vartheta_K<90^\circ$ together imply that
$\beta_K-\vartheta_K<180^\circ$.

We refer to \cite{rangarajan2012universal,
  rangarajan2011parameterization} for simple examples where $\pi$
fails to be injective over $\Gamma_h$ because the conditioning angle
fails to be acute.  Of course, \eqref{eq:cond-angle} is only a
sufficient condition for injectivity.  In fact, a simple way to relax
assumption \eqref{eq:cond-angle} is by defining an equivalence
relation $\stackrel{\Gamma}{\simeq}$ over the family of triangulations
in which $\Gamma$ is immersed. Consider two triangulations ${\cal
  T}_h=(V,C)$ and ${\cal T}'_{h'}=(V',C')$. We say ${\cal T}_h
\stackrel{\Gamma}{\simeq} {\cal T}'_{h'}$ if there is a bijection
$\Phi:V\rightarrow V'$ such that
\begin{romannum}
\item $(p,q,r)\in C \iff (\Phi(p),\Phi(q),\Phi(r))\in C'$, 
\item $\phi(v)\geq 0 \iff \phi(\Phi(v))\geq 0$, 
\item $\phi(v)< 0 \iff \phi(\Phi(v))< 0$,
\item $v\in \Gamma_h\Rightarrow \Phi(v)=v$.
\end{romannum}
The map $\Phi$ can be interpreted as a (constrained) perturbation of
vertices in ${\cal T}_h$ to yield a new mesh ${\cal T}'_{h'}$.  It is
clear from the definition of the equivalence relation that both ${\cal
  T}_h$ and ${\cal T}'_{h'}$ have {\em exactly the same set of
  positive edges} even though their positively cut triangles can have
very different conditioning angles. The key point is that the result
of the theorem can be applied to ${\cal T}_h$ from merely knowing the
existence of a triangulation in its equivalence class that has acute
conditioning angle. In light of this observation, the theorem applies
even to some families of background meshes that do not satisfy
assumption \eqref{eq:cond-angle}.

With no conformity requirements on the background mesh, the acute
conditioning angle assumption \eqref{eq:cond-angle} is easy to satisfy
in practice. A simple way for example, is to ensure that triangles in
the vicinity of $\Gamma_h$ in the background mesh are acute angled;
even simpler--- use background meshes consisting of all acute angled
triangles. Such acute triangulations, including adaptively refined
ones, are conveniently constructed by tiling quadtrees using stencils
of acute angled triangles provided in \cite{bern1994provably}.

\subsection{Restrictions on triangle sizes}
\label{subsec:h-bound}
Conditions restricting the mesh size, namely \eqref{eq:h-global},
\eqref{eq:sigmaCh} and \eqref{eq:Ch}, were identified by simply
tracking the restrictions on the mesh size in the proof of Theorem
\ref{thm:parameterization}. They are easily checked for a given curve
and background mesh and can be used to guide refinement of background
meshes near the boundaries of domains. Furthermore, they make
transparent what parameters related to the curve and to the mesh
influence how much refinement is required. For instance,
\eqref{eq:h-global} shows that a more refined mesh is required if the
curve has small features. The requirement that $\sigma_KC_K^hh_K$ be
positive in \eqref{eq:sigmaCh} is equivalent to $M_Kh_K<1$, which
reveals that smaller triangles are required where the curve has large curvature.
More refinement is also needed when conditioning angles are close to
$90^\circ$, when triangles are poorly shaped as indicated by large
values of $\sigma_K$ or small values of $\vartheta_K^\text{adj}$.

Commonly used meshing algorithms usually guarantee \emph{shape
  regularity} and bounds for interior angles in triangles with mesh
refinement. Consequently, there exist mesh size independent constants
$\sigma>0$ and $0^\circ<\theta_\text{min}\leq
\theta_\text{max}<180^\circ$ such that the shape parameter is bounded
by $\sigma$ and interior angles of triangles are bounded between
$\theta_\text{min}$ and $\theta_\text{max}$. As discussed above,
conditioning angles can be guaranteed to be acute independent of the
mesh size. For example, $\vartheta_K= 60^\circ$ for background meshes
of equilateral triangles. Angles in triangulations constructed using
stencils in \cite{bern1994provably} are guaranteed to lie between
$36^\circ$ and $80^\circ$.

It is imperative also to consider if the requirements on the mesh size
posed by \eqref{eq:mesh-restrictions} are too conservative.  We check
this for a specific example of a circle of radius $R$ immersed in a
background mesh of equilateral triangles. In such a case, we have
$h_K=h$ for each triangle in the mesh, and
$\vartheta_K=60^\circ,\sigma_K=\sqrt{3}, r_n=M_K=1/R$ and
$\vartheta_K^\text{adj}=60^\circ$ (when defined) for each positively
cut triangle $K$.  Then, satisfying \eqref{eq:mesh-restrictions}
requires $h<h_0:=R/(1+2\sqrt{3})\simeq 0.224R$. The \textit{a priori}
estimate $h_0=0.224R$ is a reasonable one because it is comparable to
$R$. Of course, the estimate will change with the choice of background
meshes.

\subsubsection{Bound for the Jacobian}
\label{subsubsec:discussion-jacobian}
Eq.\eqref{eq:jac-estimate} provides an estimate for the Jacobian of
the parameterization.  Inspecting the lower bound in
\eqref{eq:jac-estimate}, which is the critical one, shows that $J\geq
\sin(\beta_K-\vartheta_K)$ if $M_K h_K=0$. This is precisely the
Jacobian computed for a line, as in figure \ref{fig:line-idea}, when
the definitions of $\beta_K$ in \eqref{eq:def-betaK} is replaced by
that in the figure. The same interpretation of the lower bound holds
when $M_K\not=0$ but $h_K$ is small. In this case, each positive edge
parameterizes a small subset of $\Gamma$, which appears essentially
straight.

For reasonably large values of $M_Kh_K$, the angle $\beta_K$ in
\eqref{eq:def-betaK} can be close to $90^\circ$, even acute. Hence
$\beta_K-\vartheta_K$ can be small. In light of this, we mention that
a smaller conditioning angle yields a better parameterization, one
with $J$ closer to $1$. Finally, with mesh size independent bounds for
$\sigma_K$ and $\vartheta_K$, it is straightforward to demonstrate
that the estimates for $J$ in \eqref{eq:jac-estimate} are in turn
bounded away from zero independent of the mesh size (specifically,
$\beta_K-\vartheta_K$ and $1\pm M_Kh_K$ appearing in the estimate can
be bounded independent of the mesh size).


\subsection{Outline of proof}
\label{subsec:outline}
We briefly discuss the outline of the proof of Theorem
\ref{thm:parameterization}.  The critical step is showing that $\pi$
is injective over $\Gamma_h$. To this end, we proceed in simple steps
by considering the restriction of $\pi$ over each positive edge, then
over pairs of intersecting positive edges and finally over connected
components of $\Gamma_h$.

In Appendix \ref{sec:distances-angles}, we compute bounds for the
signed distance function $\phi$ on $\Gamma_h$ and for angles between
positive edges and local tangents/normals to $\Gamma$.  By requiring
that size of positively cut triangles be sufficiently small and by
invoking assumption \eqref{eq:cond-angle}, we show that a positive
edge is never parallel to a local normal to $\Gamma$ (Proposition
\ref{prop:angle-normal}). From here, we infer that $\pi$ is injective
over each positive edge (Lemma \ref{lem:edge-injective}). The required
bounds for the Jacobian in \eqref{eq:jac-estimate} also follow easily
from the angle estimates. Part (ii) of the theorem is then a direct
consequence of the inverse function theorem.

A logical next step is to show that $\pi$ is injective over each pair
of intersecting positive edges (Proposition
\ref{lem:pair-injective}). For this, in \S\ref{sec:on-gammah} we first
examine how positive edges intersect.  Lemma \ref{cor:two-vertex}
states that precisely two positive edges intersect at each vertex in
$\Gamma_h$. This result leads us to conclude that $\Gamma_h$ is in
fact a collection of simple, closed curves (Lemma
\ref{lem:simple-closed}).

Knowing that (i) $\pi$ is injective over each pair of intersecting
positive edges, (ii) each connected component of $\Gamma_h$ is a
simple, closed curve and (iii) $\pi$ is continuous over $\Gamma_h$, we
demonstrate (in Lemma \ref{lem:homeo}) that $\pi$ is a homeomorphism
over each connected component of $\Gamma_h$.  What remains to be shown
is that precisely one connected component of $\Gamma_h$ is mapped to
each connected component of $\Gamma$. We do this in
\S\ref{sec:connected} by illustrating that the collection of positive
edges that map to a connected component of $\Gamma$ is itself a
connected set (Lemma \ref{lem:connected}).

\subsection{Assumptions and notation for subsequent sections}
\label{subsec:assumptions}
In all results stated in subsequent sections, we presume that the
\eqref{eq:mesh-restrictions} in the statement of Theorem
\ref{thm:parameterization} hold. In several intermediate results, one
or more of these assumptions could be relaxed.

We shall denote the unit normal and unit tangent to $\Gamma$ at
$\xi\in\Gamma$ by $\hat{N}(\xi)$ and $\hat{T}(\xi)$ respectively. We
assume an orientation for $\Gamma$ such that $\hat{N}=\nabla\phi$ on
the curve, and that $\{\hat{T},\hat{N}\}$ constitutes a right-handed
basis for ${\mathbb R}^2$ at any point on the curve. Given distinct
points $a,b\in{\mathbb R}^2$, we denote the unit vector pointing from
$a$ to $b$ by $\hat{U}_{ab}$ and define $\hat{U}_{ab}^\perp$ such that
$\{\hat{U}_{ab},\hat{U}_{ab}^\perp\}$ is a right-handed basis.

The following simple calculation establishes the ranges of parameters
$\eta_K$ and $\beta_K$ introduced in the statement of Theorem
\ref{thm:parameterization}. Furthermore, for each $K\in {\cal P}_h$,
part (ii) of Proposition \ref{prop:well-def} together with
\eqref{eq:h-global} implies that $\overline{K}\subset
B(\Gamma,r_n)$. Then Theorem \ref{thm:nbd} shows that $\pi$ is $C^1$
and in particular, well defined over $\overline{K}$. Since any
positive edge is an edge of some triangle in ${\cal P}_h$, we get that
$\Gamma_h\subset B(\Gamma,r_n)$ and hence that $\pi$ is well defined
and continuous on $\Gamma_h$. We shall frequently use these
consequences of the proposition in the remainder of the article, often
without explicitly referring to it.
\begin{proposition}
  Let $K=(a,b,c)\in {\cal P}_h$. Then
  \begin{romannum}
  \item $\overline{K}\cap \Gamma\neq \emptyset$. In particular,
    $\phi(c)<0\Rightarrow e_{bc}\cap\Gamma\neq
    \emptyset,e_{ac}\cap\Gamma\neq \emptyset$,
  \item $|\phi|\leq h_K$ on $\overline{K}$,
  \item $\eta_K$ defined in \eqref{eq:def-etaK} satisfies $0<\eta_K\leq
    1$,
  \item $\beta_K$ given by \eqref{eq:def-betaK} is well defined and
    $\beta_K>\vartheta_K$.
  \end{romannum}
  \label{prop:well-def}
\end{proposition}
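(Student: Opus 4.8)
Let me think through how to prove each of the four parts.

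Part (i): $\overline{K} \cap \Gamma \neq \emptyset$. Since $K=(a,b,c)$ is positively cut, $\phi \geq 0$ at exactly two vertices, say $a,b$, and $\phi(c) < 0$. Then $\phi(a) \geq 0 > \phi(c)$, so by continuity of $\phi$ along the segment $e_{ac}$ (which lies in $\overline{K}$), there is a point where $\phi = 0$, i.e. a point of $\Gamma$ on $e_{ac}$. Same for $e_{bc}$. This gives both the general statement and the "in particular" clause.

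Part (ii): $|\phi| \leq h_K$ on $\overline{K}$. By part (i) pick $y_0 \in \overline{K} \cap \Gamma$, so $\phi(y_0) = 0$. For any $x \in \overline{K}$, $|\phi(x)| = |\phi(x) - \phi(y_0)|$. Since $\phi$ is $1$-Lipschitz (its gradient has unit norm on $B(\Gamma, r_n)$ by Theorem \ref{thm:nbd}, and $\overline{K} \subset B(\Gamma, r_n)$ by \eqref{eq:h-global}), $|\phi(x) - \phi(y_0)| \leq d(x,y_0) \leq h_K$ since both points lie in $\overline{K}$ whose diameter is $h_K$.

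Part (iii): $0 < \eta_K \leq 1$. Recall $\eta_K = (\min\{\phi(a),\phi(b)\} - \phi(c))/h_K$. Since $a,b$ are the vertices with $\phi \geq 0$ and $c$ has $\phi(c) < 0$, we have $\min\{\phi(a),\phi(b)\} \geq 0 > \phi(c)$, so the numerator is strictly positive, giving $\eta_K > 0$. For the upper bound, $\min\{\phi(a),\phi(b)\} - \phi(c) \leq |\phi(a)| + |\phi(c)|$ — hmm, that's not tight enough by itself. Better: $\min\{\phi(a),\phi(b)\} - \phi(c) \leq \phi(a) - \phi(c) = |\phi(a) - \phi(c)| \leq d(a,c) \leq h_K$ using again that $\phi$ is $1$-Lipschitz and $a,c \in \overline{K}$. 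Wait — need $\min\{\phi(a),\phi(b)\} \leq \phi(a)$, true. And $\phi(a) - \phi(c) > 0$, so $= |\phi(a)-\phi(c)|$. Hence numerator $\leq h_K$, giving $\eta_K \leq 1$. Actually I should pick the vertex achieving the min; say WLOG $\phi(a) \leq \phi(b)$, then $\eta_K = (\phi(a) - \phi(c))/h_K$ and the argument is clean. But note the convention in Definition \ref{def:pos-cut}(iv) about which vertex is "proximal" — I should be careful that $a$ here denotes the proximal vertex as in the theorem statement, so $\phi(a) \leq \phi(b)$ is consistent.

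Part (iv): $\beta_K$ well defined and $\beta_K > \vartheta_K$. From \eqref{eq:def-betaK}, $\cos\beta_K = C_K^h \sigma_K h_K - \eta_K$ with $\beta_K \in [0^\circ, 180^\circ]$; for this to define $\beta_K$ we need $C_K^h\sigma_K h_K - \eta_K \in [-1,1]$. Upper bound: by \eqref{eq:sigmaCh}, $\sigma_K C_K^h h_K < \min\{\cos\vartheta_K, \sin(\vartheta_K/2)\} \leq 1$, and $\eta_K > 0$ by (iii), so $C_K^h\sigma_K h_K - \eta_K < 1$. Lower bound: $C_K^h \sigma_K h_K > 0$ and $\eta_K \leq 1$ by (iii), so $C_K^h\sigma_K h_K - \eta_K > -1$. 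Hence $\cos\beta_K \in (-1,1)$ and $\beta_K \in (0^\circ, 180^\circ)$ is well defined. For $\beta_K > \vartheta_K$: since $\vartheta_K < 90^\circ$ by \eqref{eq:cond-angle}, $\cos$ is decreasing on $[0^\circ, 90^\circ]$... but $\beta_K$ might exceed $90^\circ$. We have $\cos\beta_K = C_K^h\sigma_K h_K - \eta_K < C_K^h \sigma_K h_K < \cos\vartheta_K$ using \eqref{eq:sigmaCh} and $\eta_K > 0$. Since $\cos$ is strictly decreasing on $[0^\circ,180^\circ]$ and $\vartheta_K \in (0^\circ, 90^\circ) \subset [0^\circ,180^\circ]$, $\cos\beta_K < \cos\vartheta_K$ implies $\beta_K > \vartheta_K$. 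Done.

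The main obstacle, such as it is, will just be keeping the vertex-labeling conventions straight between Definition \ref{def:pos-cut} and the theorem statement — which of $a,b$ is proximal — and making sure the "$1$-Lipschitz" property of $\phi$ is correctly invoked on $\overline{K}$, which requires first noting $\overline{K} \subset B(\Gamma, r_n)$ (itself a consequence of part (ii) plus \eqref{eq:h-global}, so there's a mild bootstrapping to present in the right order: prove (i), then (ii) using only that $\phi$ is defined and Lipschitz on a neighborhood containing $\overline K$, then observe the neighborhood inclusion is consistent). None of the estimates is deep; this is a warm-up lemma assembling Lipschitz continuity of $\phi$ with the sign pattern at vertices and the hypotheses \eqref{eq:mesh-restrictions}.
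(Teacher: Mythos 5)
Your proof is correct and follows essentially the paper's route: parts (i), (ii), and the lower bound in (iii) are read off the definitions, while the upper bound in (iii) and part (iv) require the short estimates you give (and the paper only writes out the latter two). The one wrinkle you rightly flag---invoking $|\nabla\phi|=1$ on $B(\Gamma,r_n)$ for (ii) before you can even know $\overline{K}\subset B(\Gamma,r_n)$---is avoided entirely by noting that $|\phi|=d(\cdot,\Gamma)$ holds \emph{globally} by definition, so for $x\in\overline K$ and any $\xi\in\overline K\cap\Gamma$ (from part (i)) one has $|\phi(x)|\le d(x,\xi)\le h_K$ with no neighborhood argument, and likewise for (iii), $\phi(a)-\phi(c)=d(a,\Gamma)+d(c,\Gamma)\le d(a,\xi)+d(c,\xi)=d(a,c)\le h_K$ for $\xi\in e_{ac}\cap\Gamma$, which is exactly the paper's calculation.
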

\begin{proof}
  We only show (iv) and the upper bound in (iii), since the others
  follow directly from the definitions. To this end, assume that
  $\phi(c)<0$ and $\phi(a),\phi(b)\ge 0$, and consider any $\xi\in
  e_{ac}\cap \Gamma$. From the definition of $\eta_K$ in
  (\ref{eq:def-etaK}), we have
  \begin{align}
    \eta_Kh_K\leq \phi(a)-\phi(c)\leq d(a,\Gamma)+d(c,\Gamma)\leq
    d(a,\xi)+d(c,\xi) \leq h_K, \notag
  \end{align}
  which shows that $\eta_K\leq 1$. To show that $\beta_K$ is well
  defined, we check that $\cos\beta_K\in [-1,1]$. Noting that
    $\sigma_KC_K^hh_K\geq 0$ and $\eta_K\leq 1$ shows that 
    \begin{align*}
      \cos\beta_K = \sigma_KC_K^hh_K -\eta_K \geq -\eta_K \geq -1. 
    \end{align*}
    For the upper bound, we have
    \begin{align*}
      \cos\beta_K 
      &= \sigma_K C_Kh_K -\eta_K, \notag \\
      &\leq \sigma_KC_Kh_K, \qquad ~ \left(\text{using}~\eta_K>0\right)   \notag \\
      &\leq \cos\vartheta_K \leq 1, \qquad
      \left(\text{from}~\eqref{eq:sigmaCh}\right) 
    \end{align*}
    which also shows that $\beta_K>\vartheta_K$.
\end{proof}
\comment{
  The proof just involves exercising definitions.
  \begin{romannum}
  \item Inspect the sign of the continuous function $\phi$ at vertices
    of $K$.
  \item For any $z\in \overline{K}$ and $\xi\in e_{ac}\cap \Gamma$,
    $|\phi(z)|=d(z,\Gamma)\leq d(z,\xi)\leq h_K$.
  \item Assume $\phi(c)<0$. Then $\phi(a),\phi(b)\geq 0$ which implies
    \begin{align}
      \eta_K = \frac{\min\{\phi(a),\phi(b)\}-\phi(c)}{h_K} >
      0. \label{eq:a-1}
    \end{align}
    Consider any $\xi\in e_{ac}\cap \Gamma$. From the definition of
    $\eta_K$ in \eqref{eq:a-1}, we have
    \begin{align}
      \eta_Kh_K\leq \phi(a)-\phi(c)\leq d(a,\Gamma)+d(c,\Gamma)\leq
      d(a,\xi)+d(c,\xi) \leq h_K, \notag
    \end{align}
    which shows that $\eta_K\leq 1$.
  \item To show that $\beta_K$ to be well defined, we check that
    $\cos\beta_K\in [-1,1]$ using $\eta_K\leq 1$ and
    \eqref{eq:sigmaCh}:
    \begin{align}
      -1\leq -\eta_K\leq \cos\beta_K=\sigma_KC_K^hh_K-\eta_K\leq
      \sigma_KC_K^hh_K<\mu\cos\vartheta_K. \label{eq:a-2}
    \end{align}
    Since $\mu<1$ independent of $h$, \eqref{eq:a-2} also shows that
    $\beta_K>\vartheta_K$ for any $h$. \qquad \endproof
  \end{romannum}
}

\section{Injectivity on each positive edge}
\label{sec:edge-injective}
To show the injectivity of $\pi$ on each positive edge (Lemma
\ref{lem:edge-injective}) and estimate the Jacobian of this mapping
(Lemma \ref{lem:jac}), we essentially follow the calculation
illustrated in Fig. \ref{fig:line-idea}. In both arguments, we use the
following angle estimate that is proved in Appendix
\ref{sec:distances-angles}.
\begin{proposition}
  Let $K=(a,b,c)\in{\cal P}_h$ have positive edge $e_{ab}$ and
  proximal vertex $a$. Then
  \begin{align}
    -\frac{3}{2}C_K^hh_K &\leq \hat{N}(\pi(x))\cdot\hat{U}_{ab} \leq
    \cos(\beta_K-\vartheta_K) \quad \forall x\in e_{ab}.
    \label{eq:angle-normal} 
  \end{align}
  In particular, $|\hat{N}(\pi(x))\cdot\hat{U}_{ab}|<1$ and
  $|\hat{T}(\pi(x))\cdot\hat{U}_{ab}|>0$.
  \label{prop:angle-normal}
\end{proposition}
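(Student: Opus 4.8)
The plan is to estimate $\hat N(\pi(x))\cdot\hat U_{ab}$ first at the proximal vertex $x=a$, where $\hat N(\pi(a))=\nabla\phi(a)$ is pinned down by the values of $\phi$ at the three vertices of $K$, and then transport the estimate to an arbitrary $x\in e_{ab}$ by bounding how fast $\nabla\phi$ changes along the edge. First I would record what is available on $\overline K$: from the remarks preceding Proposition~\ref{prop:well-def}, $\overline K\subset B(\Gamma,r_n)$ and $\pi$ is $C^1$ there; $|\phi|\le h_K$ on $\overline K$ by Proposition~\ref{prop:well-def}(ii); and since $d(w,\pi(w))=|\phi(w)|\le h_K$ for $w\in\overline K$, one has $\pi(\overline K)\subset\overline{B(K,h_K)}\cap\Gamma$, so $|\kappa_s|\le M_K$ on $\pi(\overline K)$ and $|\phi\,\kappa_s(\pi(\cdot))|\le M_Kh_K<1$ on $\overline K$ by \eqref{eq:sigmaCh}. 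Hence Proposition~\ref{prop:gradpi} applies on $\overline K$, and its formula for $\nabla\nabla\phi$ gives $\|\nabla\nabla\phi\|\le M_K/(1-M_Kh_K)=C_K^h$ throughout $\overline K$; I also use the standard fact, implicit in Theorem~\ref{thm:nbd}, that $\nabla\phi(x)=\hat N(\pi(x))$ on $B(\Gamma,r_n)$.

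From these facts I extract two elementary inequalities by integrating along segments contained in the convex set $\overline K$: for $y,z\in\overline K$,
\begin{align*}
  |\phi(z)-\phi(y)-(z-y)\cdot\hat N(\pi(y))| &\le \tfrac12\,C_K^h\,d(y,z)^2, \\
  |\hat N(\pi(z))-\hat N(\pi(y))| &\le C_K^h\,d(y,z),
\end{align*}
the first a second-order Taylor expansion of $t\mapsto\phi(y+t(z-y))$ with remainder controlled by $\|\nabla\nabla\phi\|\le C_K^h$, the second the mean value inequality for $t\mapsto\nabla\phi(y+t(z-y))$. The left inequality in \eqref{eq:angle-normal} then follows quickly: taking $y=a$, $z=b$ and using that $a$ proximal forces $\phi(b)\ge\phi(a)$ (so $\min\{\phi(a),\phi(b)\}=\phi(a)$), one gets $(b-a)\cdot\hat N(\pi(a))\ge-\tfrac12 C_K^hd(a,b)^2$; dividing by $d(a,b)\le h_K$ gives $\hat N(\pi(a))\cdot\hat U_{ab}\ge-\tfrac12 C_K^hh_K$, and then the second estimate with $y=a$, $z=x$ (and $d(a,x)\le h_K$) yields $\hat N(\pi(x))\cdot\hat U_{ab}\ge-\tfrac32 C_K^hh_K$.

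For the right inequality the idea is to control instead the angle between $\hat N(\pi(x))$ and the direction $\hat U_{ac}$ toward the third vertex, and then rotate back by the conditioning angle $\vartheta_K=\angle(\hat U_{ab},\hat U_{ac})$. Applying the first estimate with $y=a$, $z=c$, and using $\phi(a)-\phi(c)=\eta_Kh_K$, $\eta_K>0$ (Proposition~\ref{prop:well-def}(iii)) and $d(a,c)\le h_K$, gives $\hat N(\pi(a))\cdot\hat U_{ac}\le-\eta_K+\tfrac12 C_K^hh_K$; transporting to $x\in e_{ab}$ with the second estimate and using $\sigma_K\ge\sqrt3\ge\tfrac32$ together with $C_K^hh_K>0$,
\[
  \hat N(\pi(x))\cdot\hat U_{ac}\ \le\ -\eta_K+\tfrac32 C_K^hh_K\ \le\ -\eta_K+\sigma_KC_K^hh_K\ =\ \cos\beta_K .
\]
Since $\hat N(\pi(x))$ and $\hat U_{ac}$ are unit vectors and $\beta_K\in[0^\circ,180^\circ]$, monotonicity of $\cos$ forces $\angle(\hat N(\pi(x)),\hat U_{ac})\ge\beta_K$; the triangle inequality for angles between unit vectors then gives $\angle(\hat N(\pi(x)),\hat U_{ab})\ge\beta_K-\vartheta_K$, which lies in $(0^\circ,180^\circ)$ because $\beta_K>\vartheta_K$ by Proposition~\ref{prop:well-def}(iv) (this is where \eqref{eq:cond-angle} and \eqref{eq:sigmaCh} enter) and $\beta_K\le180^\circ$, $\vartheta_K>0^\circ$. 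Applying $\cos$ once more yields $\hat N(\pi(x))\cdot\hat U_{ab}\le\cos(\beta_K-\vartheta_K)$. The ``in particular'' claims are then immediate: $\cos(\beta_K-\vartheta_K)<1$ since $\beta_K-\vartheta_K>0^\circ$, and $\tfrac32 C_K^hh_K\le\sigma_KC_K^hh_K<1$ by \eqref{eq:sigmaCh}, so $|\hat N(\pi(x))\cdot\hat U_{ab}|<1$, whence $|\hat T(\pi(x))\cdot\hat U_{ab}|>0$ because $\{\hat T,\hat N\}$ is orthonormal at $\pi(x)$.

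I expect the only genuinely delicate step to be the very first one: justifying that the curvature bound $\|\nabla\nabla\phi\|\le C_K^h$ is legitimate over all of $\overline K$, i.e.\ checking the hypotheses of Theorem~\ref{thm:nbd} and Proposition~\ref{prop:gradpi} ($\overline K\subset B(\Gamma,r_n)$, $|\phi\,\kappa_s(\pi(\cdot))|<1$, and $|\kappa_s|\le M_K$ on $\pi(\overline K)$) — this is exactly what assumptions \eqref{eq:h-global} and \eqref{eq:sigmaCh} are for, and everything after it is bookkeeping with the two displayed inequalities. Replacing a naive orientation analysis (of where $c$ sits relative to the normal at $\pi(a)$) by the angle--triangle--inequality argument for the upper bound is what keeps that part clean and localizes the use of the acute conditioning angle assumption to the single fact $\beta_K>\vartheta_K$.
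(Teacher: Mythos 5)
Your proposal is correct and follows essentially the same route as the paper: establish $\overline K\subset B(\Gamma,r_n)$ and the Hessian bound $\|\nabla\nabla\phi\|\le C_K^h$, derive the Taylor and Lipschitz estimates (these are precisely Corollary~\ref{cor:phi-estimate}), use the proximality inequality $\phi(a)\le\phi(b)$ for the lower bound at $a$ (Proposition~\ref{lem:anglebd1}) and transport it to general $x\in e_{ab}$, and for the upper bound combine an intermediate estimate $\hat N(\pi(x))\cdot\hat U_{ac}\le\cos\beta_K$ with the angle triangle inequality \eqref{eq:trigid} and $\beta_K>\vartheta_K$ from Proposition~\ref{prop:well-def}(iv). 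The only computational divergence is in how you obtain that intermediate estimate. The paper's Proposition~\ref{prop:angle-ac} Taylor-expands at $x$ directly and divides by $d(c,y)$, which is sandwiched between $\rho_K$ and $h_K$, so $\sigma_K=h_K/\rho_K$ emerges intrinsically and the result holds for all $y\in e_{ab}$ (a generality reused in Appendix~\ref{sec:topology}). You instead Taylor-expand at $a$, divide by $d(a,c)\le h_K$ to get $\hat N(\pi(a))\cdot\hat U_{ac}\le-\eta_K+\tfrac12 C_K^h h_K$, and transport to $x$, accumulating the coefficient $3/2$ which you then absorb with $\sigma_K\ge\sqrt3>3/2$. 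That step is valid, but it is worth noticing that it silently leans on the geometric fact $\sigma_K\ge\sqrt3$ to make $3/2\le\sigma_K$ come out, whereas the paper's division produces the $\sigma_K$ factor without any such numerical coincidence; your version is also special to $y=a$, which happens to be all this proposition needs.
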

\begin{lemma}
  The restriction of $\pi$ to each positive edge in $\Gamma_h$ is
  injective.
  \label{lem:edge-injective}
\end{lemma}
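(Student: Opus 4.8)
The plan is to argue by contradiction, combining the characterization of the fibers of $\pi$ provided by Theorem \ref{thm:nbd} with the angle bound of Proposition \ref{prop:angle-normal}. Fix a positive edge $e\subset\Gamma_h$. By Definition \ref{def:pos-cut}(iii), $e$ is the positive edge of some triangle $K\in{\cal P}_h$; relabel the vertices of $K$ as $(a,b,c)$ so that $e=e_{ab}$ and $a$ is the proximal vertex, matching the hypotheses of Proposition \ref{prop:angle-normal}. Since $e_{ab}\subset\overline{K}$ and $\overline{K}\subset B(\Gamma,r_n)$ by Proposition \ref{prop:well-def}(ii) together with \eqref{eq:h-global}, Theorem \ref{thm:nbd} applies on $e_{ab}$; in particular the map $x\mapsto(\phi(x),\pi(x))$ is a diffeomorphism of $B(\Gamma,r_n)$ onto $(-r_n,r_n)\times\Gamma$ whose inverse is $(\phi,\xi)\mapsto\xi+\phi\,\hat{N}(\xi)$.

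Now suppose, for contradiction, that $\pi(x_1)=\pi(x_2)=:\xi$ for some $x_1\neq x_2$ in $e_{ab}$. Applying the inverse map above, $x_i=\xi+\phi(x_i)\,\hat{N}(\xi)$ for $i=1,2$, and subtracting yields $x_1-x_2=\big(\phi(x_1)-\phi(x_2)\big)\hat{N}(\xi)$. Because $x_1\neq x_2$ this vector is nonzero, so $\phi(x_1)\neq\phi(x_2)$ and $x_1-x_2$ is a nonzero multiple of $\hat{N}(\xi)$. But $x_1$ and $x_2$ both lie on the line through $a$ with direction $\hat{U}_{ab}$, so $x_1-x_2$ is also a nonzero multiple of $\hat{U}_{ab}$. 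Hence $\hat{N}(\xi)=\pm\hat{U}_{ab}$, i.e. $|\hat{N}(\pi(x_1))\cdot\hat{U}_{ab}|=1$, which contradicts Proposition \ref{prop:angle-normal}, whose conclusion is $|\hat{N}(\pi(x))\cdot\hat{U}_{ab}|<1$ for every $x\in e_{ab}$. Therefore $\pi$ is injective on $e$.

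The argument is deliberately short: all of the geometry — that a positive edge is never aligned with a local normal to $\Gamma$ — has been absorbed into Proposition \ref{prop:angle-normal}, whose proof in the appendix is where the acute conditioning angle assumption \eqref{eq:cond-angle} and the size restriction \eqref{eq:sigmaCh} are actually used. The only points needing care here are checking that $e\subset B(\Gamma,r_n)$ so that Theorem \ref{thm:nbd} is available, and using the precise form of the inverse diffeomorphism to see that any two points with the same image under $\pi$ must differ by a multiple of the normal at that image. An equivalent formulation, which also foreshadows part (ii) of Theorem \ref{thm:parameterization}, is to set $x(t)=a+t\,\hat{U}_{ab}$ and note via Proposition \ref{prop:gradpi} that $\tfrac{d}{dt}\pi(x(t))$ is a nonzero multiple of $\hat{T}(\pi(x(t)))$ of constant sign, since $\hat{T}(\pi(x(t)))\cdot\hat{U}_{ab}$ is continuous and never vanishes by Proposition \ref{prop:angle-normal} while $1-\phi\kappa_s>0$ by the curvature bound $M_Kh_K<1$; the fiber argument above is preferable only because it avoids estimating the arclength of $\pi(e_{ab})$ against the length of the component of $\Gamma$ containing it.
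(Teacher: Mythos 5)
Your argument is correct and is essentially the paper's own proof: contradiction via the normal-fiber structure from Theorem \ref{thm:nbd}, concluding $|\hat{N}(\pi(x))\cdot\hat{U}_{ab}|=1$, which contradicts Proposition \ref{prop:angle-normal}. The extra care you take in verifying $e\subset B(\Gamma,r_n)$ and the sketched derivative-based alternative are fine but not substantively different.
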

\begin{proof}
  Let $(a,b,c)\in{\cal P}_h$ have positive edge $e_{ab}$ and proximal
  vertex $a$.  We proceed by contradiction. Suppose that $x,y\in
  e_{ab}$ are distinct points such that $\pi(x)=\pi(y)$.  From Theorem
  \ref{thm:nbd} and $\pi(x)=\pi(y)$, we have
  \begin{subequations}
    \label{eq:g12}
    \begin{align}
      x &= \pi(x) + \phi(x)\hat{N}(\pi(x)), \label{eq:g1} \\
      y &= \pi(y) + \phi(y)\hat{N}(\pi(y)) = \pi(x) + \phi(y)\hat{N}(\pi(x)). \label{eq:g2}
    \end{align}
  \end{subequations}
  Noting $x\neq y$ in \eqref{eq:g12} implies that
  $\phi(x)\neq\phi(y)$.  Therefore, subtracting \eqref{eq:g2} from
  \eqref{eq:g1} yields
  \begin{align}
    \hat{N}(\pi(x)) &= \frac{x-y}{\phi(x)-\phi(y)}. \label{eq:g3}
  \end{align}
  By definition of $x,y\in e_{ab}$, $x-y$ is a vector parallel to
  $\hat{U}_{ab}$. Therefore \eqref{eq:g3} in fact shows that
  $|\hat{N}(\pi(x))\cdot\hat{U}_{ab}| =1$, contradicting Proposition
  \ref{prop:angle-normal}.
\end{proof}

Before showing the bounds in \eqref{eq:jac-estimate} for the Jacobian,
we prove Corollary \ref{cor:edge-homeo}, a useful step in showing part
(iv) of Theorem \ref{thm:parameterization}.  As discussed in
\S\ref{subsec:assumptions}, continuity of $\pi$ on each positive edge
follows from part (ii) of Proposition \ref{prop:well-def}.  The
continuity of its inverse is a consequence of Lemma
\ref{lem:edge-injective} and the following result in basic topology,
which we use here and later in \S\ref{sec:pi-injective}.
\begin{theorem}[{\cite[Chapter 3]{armstrong1983basic}}]
  A one-one, onto and continuous function from a compact space to a
  Hausdorff space is a homeomorphism.
  \label{thm:inv-cont}
\end{theorem}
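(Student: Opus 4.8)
The plan is to prove the equivalent assertion that a continuous bijection $f:X\to Y$ from a compact space $X$ onto a Hausdorff space $Y$ is a \emph{closed map}, that is, it carries closed subsets of $X$ to closed subsets of $Y$. Since $f$ is a bijection, for a closed set $C\subseteq X$ the set $f(C)$ is precisely the preimage $(f^{-1})^{-1}(C)$, so "$f$ is closed" is literally the statement "$f^{-1}:Y\to X$ is continuous"; combined with the assumed continuity of $f$ this yields that $f$ is a homeomorphism. Everything therefore reduces to showing that $f(C)$ is closed in $Y$ whenever $C$ is closed in $X$, and I would obtain this by chaining together three classical facts.

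First I would recall that a closed subset $C$ of a compact space $X$ is compact: given any open cover $\{U_\alpha\}$ of $C$, the family $\{U_\alpha\}\cup\{X\setminus C\}$ is an open cover of $X$, which by compactness admits a finite subcover, and deleting $X\setminus C$ from that subcover leaves a finite subcover of $C$. Second, the continuous image of a compact set is compact: if $\{V_\beta\}$ is an open cover of $f(C)$, then $\{f^{-1}(V_\beta)\}$ is an open cover of $C$ by continuity of $f$, a finite subcover $f^{-1}(V_{\beta_1}),\dots,f^{-1}(V_{\beta_n})$ exists by the previous step, and then $V_{\beta_1},\dots,V_{\beta_n}$ cover $f(C)$. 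Hence $f(C)$ is a compact subset of $Y$.

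Third, and this is where the Hausdorff hypothesis on $Y$ is essential, I would use that a compact subset of a Hausdorff space is closed. To see this, let $K\subseteq Y$ be compact and fix $y\in Y\setminus K$. For each $k\in K$ the Hausdorff property supplies disjoint open sets $U_k\ni k$ and $W_k\ni y$. The sets $\{U_k\}_{k\in K}$ cover $K$, so there is a finite subcover $U_{k_1},\dots,U_{k_m}$; set $W:=\bigcap_{i=1}^{m}W_{k_i}$, which is an open neighborhood of $y$ as a finite intersection of such. Because $W\cap U_{k_i}=\emptyset$ for every $i$ and $K\subseteq\bigcup_{i=1}^{m}U_{k_i}$, we get $W\cap K=\emptyset$, i.e.\ $W\subseteq Y\setminus K$. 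Thus $Y\setminus K$ is open and $K$ is closed. Applying this with $K=f(C)$ finishes the proof.

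I do not expect a genuine obstacle here: the argument is an assembly of standard compactness and separation facts, and the only real \emph{idea} is the opening reduction, that continuity of $f^{-1}$ coincides with $f$ being a closed map because $f$ is a bijection. I would, however, flag that both hypotheses are used and neither can be dropped: compactness of $X$ is what makes closed subsets of $X$ compact (and is used again via "continuous image of compact is compact"), while the Hausdorff property of $Y$ is exactly what promotes "$f(C)$ compact" to "$f(C)$ closed".
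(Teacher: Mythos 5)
Your proof is correct and is precisely the standard argument (closed in compact $\Rightarrow$ compact, continuous image of compact is compact, compact in Hausdorff is closed, hence $f$ is a closed map and $f^{-1}$ is continuous); the paper does not reprove this result but simply cites it from Armstrong, where the same chain of lemmas is used. Nothing is missing.
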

\begin{corollary}[of Lemma \ref{lem:edge-injective}]
  Let $e$ be a positive edge in $\Gamma_h$. Then $\pi:e\rightarrow
  \pi(e)$ is a homeomorphism.
  \label{cor:edge-homeo}
\end{corollary}
\begin{proof}
  \comment{ By definition, edge $e$ is a compact set. From part (ii)
    of Proposition \ref{prop:well-def} and Lemma
    \ref{lem:edge-injective}, we know that $\pi$ is continuous and
    injective on $e$. Being a subset of ${\mathbb R}^2$, $\pi(e)$ is a
    Hausdorff space. The corollary then follows from Theorem
    \ref{thm:inv-cont}.}  From part (ii) of Proposition
  \ref{prop:well-def} and Lemma \ref{lem:edge-injective}, we know that
  $\pi$ is continuous and injective on $e$. The corollary then follows
  from Theorem \ref{thm:inv-cont}.
\end{proof}

\begin{lemma}
  Let $K=(a,b,c)\in {\cal P}_h$ have positive edge $e_{ab}$. Then
  $\pi$ is $C^1$ over $\textbf{ri}\,(e_{ab})$ and
  \begin{align}
    0<\frac{\sin\left(\beta_K-\vartheta_K\right)}{1+M_Kh_K} \leq
    \left|\nabla \pi(x)\cdot \hat{U}_{ab}\right| \leq
    \frac{1}{1-M_Kh_K}\le \frac{5}{3} \quad \forall x\in e_{ab}.  \label{eq:jac-main}
  \end{align}
  \label{lem:jac}
\end{lemma}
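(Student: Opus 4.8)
The plan is to combine the gradient formula for $\pi$ from Proposition \ref{prop:gradpi} with the angle estimate of Proposition \ref{prop:angle-normal}, exactly mirroring the straight-line computation sketched in Fig. \ref{fig:line-idea}. First I would record that $\overline{K}\subset B(\Gamma,r_n)$ (from Proposition \ref{prop:well-def}(ii) and \eqref{eq:h-global}), so $\pi$ is $C^1$ on $\overline{K}$ by Theorem \ref{thm:nbd}; in particular it is $C^1$ over $\mathbf{ri}\,(e_{ab})$. To invoke \eqref{eq:gradpi} I need $|\phi(x)\,\kappa_s(\pi(x))|<1$ for $x\in e_{ab}$: since $\pi(x)\in\Gamma$ lies within distance $h_K$ of $K$, hence in $\overline{B(K,h_K)}\cap\Gamma$, we have $|\kappa_s(\pi(x))|\le M_K$, and $|\phi(x)|\le h_K$ by Proposition \ref{prop:well-def}(ii); thus $|\phi(x)\kappa_s(\pi(x))|\le M_Kh_K<1$, the last inequality being the positivity requirement in \eqref{eq:sigmaCh}. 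So \eqref{eq:gradpi} applies.

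Next I would plug \eqref{eq:gradpi} in and compute the scalar in question. Writing $\xi=\pi(x)$,
\begin{align*}
\nabla\pi(x)\cdot\hat U_{ab} = \frac{(\hat T(\xi)\otimes\hat T(\xi))\hat U_{ab}}{1-\phi(x)\kappa_s(\xi)} = \frac{\bigl(\hat T(\xi)\cdot\hat U_{ab}\bigr)\,\hat T(\xi)}{1-\phi(x)\kappa_s(\xi)},
\end{align*}
so $\bigl|\nabla\pi(x)\cdot\hat U_{ab}\bigr| = \dfrac{|\hat T(\xi)\cdot\hat U_{ab}|}{|1-\phi(x)\kappa_s(\xi)|}$. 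For the denominator, $|1-\phi(x)\kappa_s(\xi)|\le 1+M_Kh_K$ and $\ge 1-M_Kh_K>0$. For the numerator I use $|\hat T\cdot\hat U_{ab}|^2 = 1-|\hat N\cdot\hat U_{ab}|^2$ together with Proposition \ref{prop:angle-normal}: since $-\tfrac32 C_K^hh_K \le \hat N(\xi)\cdot\hat U_{ab}\le \cos(\beta_K-\vartheta_K)$, I get an upper bound $|\hat N\cdot\hat U_{ab}|\le \max\{\tfrac32 C_K^hh_K,\ |\cos(\beta_K-\vartheta_K)|\}$, and I should check (using \eqref{eq:sigmaCh}, $\sigma_K\ge\sqrt3$, and Proposition \ref{prop:well-def}(iv) which gives $\beta_K>\vartheta_K$, plus $\beta_K-\vartheta_K<180^\circ$) that the dominant term is $\cos(\beta_K-\vartheta_K)$, so that $|\hat T\cdot\hat U_{ab}|\ge |\sin(\beta_K-\vartheta_K)| = \sin(\beta_K-\vartheta_K)>0$. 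Combining numerator and denominator bounds yields the left and middle inequalities of \eqref{eq:jac-main}; positivity on the far left follows from $\beta_K-\vartheta_K\in(0^\circ,180^\circ)$. The trivial upper bound $1 \le |\hat T\cdot\hat U_{ab}|$... wait—rather, $|\hat T\cdot\hat U_{ab}|\le 1$ gives the middle $\le 1/(1-M_Kh_K)$, and finally $M_Kh_K<\sigma_KC_K^hh_K\cdot(\text{something})$—more directly, from \eqref{eq:sigmaCh} and $\sigma_K\ge\sqrt3$ we get $M_Kh_K = \sigma_KC_K^hh_K(1-M_Kh_K)/\sigma_K$, from which a short estimate gives $M_Kh_K\le 2/5$, hence $1/(1-M_Kh_K)\le 5/3$.

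The main obstacle I anticipate is the bookkeeping in the numerator estimate: one must carefully verify that $\tfrac32 C_K^hh_K$ does not exceed $\cos(\beta_K-\vartheta_K)$ (so that the sine bound is the binding one), which requires unpacking the definition \eqref{eq:def-betaK} of $\beta_K$, using $\cos\beta_K=\sigma_KC_K^hh_K-\eta_K$ with $0<\eta_K\le1$, and exploiting the constraint $\sigma_KC_K^hh_K<\min\{\cos\vartheta_K,\sin(\vartheta_K/2)\}$ from \eqref{eq:sigmaCh}. The $\sin(\vartheta_K/2)$ term is presumably there precisely to control $\cos(\beta_K-\vartheta_K)$ from below against $\tfrac32 C_K^hh_K$. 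Everything else—the application of \eqref{eq:gradpi}, the denominator bounds, and the final numerical reduction to $5/3$—is routine once the admissibility hypothesis $|\phi\kappa_s|<1$ is in hand.
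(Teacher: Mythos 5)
Your plan follows the paper's own proof step for step: $C^1$ regularity from Proposition~\ref{prop:well-def}(ii), Theorem~\ref{thm:nbd} and~\eqref{eq:h-global}; the check $|\phi(x)\kappa_s(\pi(x))|\le M_Kh_K<1$ so that~\eqref{eq:gradpi} applies; the scalar reduction $J(x)=|\hat T(\pi(x))\cdot\hat U_{ab}|\,/\,|1-\phi(x)\kappa_s(\pi(x))|$; denominator bounds $1\mp M_Kh_K$; the numerator estimate via Proposition~\ref{prop:angle-normal}; and the closing $\sigma_KC_K^hh_K<1$, $\sigma_K\ge\sqrt3\Rightarrow M_Kh_K<(1+\sqrt3)^{-1}<2/5\Rightarrow 1/(1-M_Kh_K)\le5/3$. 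The identity you display for that last step is a tautology (it merely rewrites $M_Kh_K=C_K^hh_K(1-M_Kh_K)$), but the intended chain is the paper's and the conclusion $M_Kh_K<2/5$ is right.

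The one spot you flag as an anticipated obstacle is exactly where the paper is terse. Having reduced $J$ to the quotient above, the paper writes~\eqref{eq:h4}, namely $|\sin(\beta_K-\vartheta_K)|\le|\hat T(\pi(x))\cdot\hat U_{ab}|$, as an immediate consequence of Proposition~\ref{prop:angle-normal} and moves on. As you correctly observe, the two-sided bound $-\tfrac32C_K^hh_K\le\hat N(\pi(x))\cdot\hat U_{ab}\le\cos(\beta_K-\vartheta_K)$ by itself only yields $|\hat T(\pi(x))\cdot\hat U_{ab}|\ge\min\bigl\{\sin(\beta_K-\vartheta_K),\,\sqrt{1-(\tfrac32C_K^hh_K)^2}\bigr\}$, so the sine bound is binding only after one verifies $\tfrac32C_K^hh_K\le|\cos(\beta_K-\vartheta_K)|$. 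Your proposal leaves this at ``I should check'' rather than carrying it out; the paper does not carry it out either, so you are not behind the paper here, but a self-contained argument would supply it --- for instance by expanding $\cos(\beta_K-\vartheta_K)=\cos\beta_K\cos\vartheta_K+\sin\beta_K\sin\vartheta_K$ with $\cos\beta_K=\sigma_KC_K^hh_K-\eta_K$, and combining~\eqref{eq:sigmaCh}, $\sigma_K\ge\sqrt3$, Proposition~\ref{lem:anglebd1} and the arccos inequality from the proof of Proposition~\ref{prop:angle-normal} to force $\beta_K-\vartheta_K$ away from $90^\circ$ by the needed margin. Everything else in your sketch is correct and coincides with the paper's argument.
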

\begin{proof}
  From part (ii) of Proposition \ref{prop:well-def} and
  \eqref{eq:h-global}, we know $e_{ab}\subset B(\Gamma, r_n)$. Then
  Theorem \ref{thm:nbd} shows that $\pi$ is $C^1$ over
  $\textbf{ri}\,(e_{ab})$.

  Consider any $x\in \textbf{ri}\,(e_{ab})$. Since $|\phi(x)|\leq h_K$
  (Proposition \ref{prop:well-def}), 
  \begin{align}
    \kappa(\pi(x)) \leq \max_{\overline{B(x,h_K)}\cap \Gamma}\kappa
    \leq \max_{\overline{B(K,h_K)}\cap \Gamma}\kappa =
    M_K. \label{eq:h2}
  \end{align}
  Therefore, $|\phi(x)\kappa(\pi(x))|\leq M_Kh_K$ which is smaller
  than $1$ because of the assumption $\sigma_KC_K^hh_K>0$ in
  \eqref{eq:sigmaCh}. Then from Proposition \ref{prop:gradpi}, we get
  \begin{align}
    J(x) &:= \left|\nabla \pi(x)\cdot \hat{U}_{ab}\right| =
    \frac{|\hat{U}_{ab}\cdot
      \hat{T}(\pi(x))|}{|1-\phi(x)\kappa_s(\pi(x))|}, \label{eq:h1}
  \end{align} 
  where $\kappa_s$ is the signed curvature of $\Gamma$ (and
  $\kappa=|\kappa_s|$).  From $|\phi(x)\kappa_s(\pi(x))|\leq
  M_Kh_K<1$, we get
  \begin{align}
    1-M_Kh_K \leq |1-\phi(x)\kappa_s(\pi(x))| \leq 1+M_Kh_K. \label{eq:h3}
  \end{align}
  From Proposition \ref{prop:angle-normal}, we have
  \begin{align}
    \left|\sin(\beta_K-\vartheta_K)\right|\leq
    \left|\hat{T}(\pi(x))\cdot \hat{U}_{ab}\right| \leq 1. \label{eq:h4}
  \end{align}
  Note however from part (iv) of Proposition \ref{prop:well-def} that
  $\beta_K>\vartheta_K \Rightarrow
  \left|\sin(\beta_K-\vartheta_K)\right|=\sin(\beta_K-\vartheta_K)$.
  Then using \eqref{eq:h3} and \eqref{eq:h4} in \eqref{eq:h1} yields
  the  lower and upper bounds for $|J(x)|$ in \eqref{eq:jac-main}.
  
  It remains to show that these bounds are meaningful, i.e., the lower
  bound is positive and the upper bound is not arbitrarily large. The
  former is a consequence of $\beta_K>\vartheta_K$ (from Proposition
  \ref{prop:well-def}). We know from \eqref{eq:sigmaCh} that
  $\sigma_KC_K^hh_K<\sin(\vartheta_K/2)<1$. Then, using $M_Kh_K<1$
  from \eqref{eq:sigmaCh} and $\sigma_K\geq \sqrt{3}$, we get $M_Kh_K <
  (1+\sqrt{3})^{-1}<2/5$, which renders the upper bound in
  \eqref{eq:jac-main} independent of $h_K$.
\end{proof}

By using the strictly positive lower bound for the Jacobian computed
in the inverse function theorem, we conclude that
$\pi:\textbf{ri}(e)\rightarrow \Gamma$ is a locally a
$C^1$-diffeomorphism on each positive edge $e$. Since we have already
shown the injectivity of this map in Lemma \ref{lem:edge-injective},
part (ii) of Theorem \ref{thm:parameterization} follows.

\section{The set $\Gamma_h$}
\label{sec:on-gammah}
An essential step in showing that $\pi$ is injective over $\Gamma_h$
is understanding how positive edges intersect. The goal of this
section is to demonstrate that $\Gamma_h$ is a union of simple, closed
curves (Lemma \ref{lem:simple-closed}). We achieve this by considering
how many positive edges intersect at each vertex in $\Gamma_h$. In
Lemma \ref{cor:two-vertex}, we state that this number is precisely
two. Additionally, as claimed in part (i) of Theorem
\ref{thm:parameterization} and stated below in Lemma
\ref{lem:uniq-id}, each positive edge belongs to precisely one
positively cut triangle. The proofs of these two lemmas is somewhat
laborious, and hence are included in Appendix \ref{sec:topology}.



\begin{lemma}
  Each positive edge in ${\cal T}_h$ is a positive edge of precisely
  one triangle positively cut by $\Gamma$.
  \label{lem:uniq-id}
\end{lemma}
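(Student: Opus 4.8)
The plan is to argue by contradiction: suppose a positive edge $e = e_{pq}$ is the positive edge of two distinct positively cut triangles $K_1 = (p,q,r_1)$ and $K_2 = (p,q,r_2)$, with $r_1$ and $r_2$ lying on opposite sides of the line through $p$ and $q$ (they must be on opposite sides, since $e$ is an edge shared by the two triangles in the triangulation, and a triangulation places at most one triangle on each side of an edge). Because $e$ is a positive edge of $K_1$, we have $\phi(p), \phi(q) \ge 0$ and $\phi(r_1) < 0$; because it is also a positive edge of $K_2$, we likewise need $\phi(r_2) < 0$. So the goal is to derive a contradiction from having $\phi \ge 0$ at both $p$ and $q$ while $\phi < 0$ at the apex of the triangle on \emph{each} side of $e$.

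The key geometric idea is that the signed distance function $\phi$ is smooth (Theorem~\ref{thm:nbd}) and, by the curvature bounds in Proposition~\ref{prop:gradpi}, its restriction to the line through $p$ and $q$ is close to an affine function on the scale $h_K$. First I would note that since both $K_1$ and $K_2$ are positively cut with positive edge $e_{pq}$, the estimate \eqref{eq:dist-estimate} of Proposition~\ref{prop:well-def}(i) applies: $e_{pq}\cap\Gamma$ is reached from $r_i$ through $K_i$, so $\Gamma$ must pass very close to $e_{pq}$ from both sides. More precisely, $\overline{K_i}\cap\Gamma\ne\emptyset$ for $i=1,2$, and the intersection points lie on the two ``slanted'' edges $e_{pr_i}, e_{qr_i}$. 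Using that $\phi$ is within $C_K^h h_K^2$ of being affine along $e_{pq}$ (the second-derivative estimate $|\nabla\nabla\phi| \le C_K^h$ from \eqref{eq:gradgradphi} together with $M_Kh_K<1$), I would show that the linear interpolant of $\phi$ at $p$, $q$, $r_1$ forces $\phi$ to be positive throughout a neighborhood of $e_{pq}$ on the $r_2$-side up to distance comparable to $h_K\sin\vartheta_{K_1}^{\mathrm{adj}}$ — in other words the angle adjacent condition \eqref{eq:Ch} is exactly what prevents $\phi(r_2)$ from being negative. Here $K_2$ plays the role of $K^{\mathrm{adj}}$ relative to $K_1$: since $e_{pq}\cap\Gamma\ne\emptyset$ and $\overline{K_1}\cap\overline{K_2}=e_{pq}$, the angle $\vartheta_{K_1}^{\mathrm{adj}}$ is defined, and \eqref{eq:Ch} gives $C_{K_1}^h h_{K_1} < \tfrac12\sin\vartheta_{K_1}^{\mathrm{adj}}$.

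Concretely, parameterize points of $\overline{K_2}$ by their decomposition along $\hat U_{pq}$ and $\hat U_{pq}^\perp$ (pointing toward $r_2$). Along the segment from the foot of $r_2$ on the line $pq$ out to $r_2$, $\phi$ drops by at most $(\text{height of }K_2)\cdot\|\nabla\phi\| = (\text{height of }K_2)$ in first order, but the height of $K_2$ above $e_{pq}$ is at least $d(p,r_2)\sin\vartheta_{K_2}$ or similar; meanwhile $\phi$ on $e_{pq}$ itself is bounded below using \eqref{eq:dist-estimate}, i.e.\ $\phi \ge -C_{K_1}^h h_{K_1}^2$ there, and near the midpoint region $\phi$ is in fact controlled from below by the distance to $\Gamma\cap\overline{K_1}$ on the far side. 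Combining the lower bound for $\phi$ on $e_{pq}$ with the Lipschitz bound $\|\nabla\phi\|=1$ — so that $\phi(r_2) \ge \phi(x_0) - d(x_0,r_2)$ for a suitable $x_0\in e_{pq}$ — and showing $d(x_0,r_2) < \phi(x_0)$ under \eqref{eq:sigmaCh}–\eqref{eq:Ch}, contradicts $\phi(r_2)<0$.

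I expect the main obstacle to be the bookkeeping that turns the qualitative statement ``$\Gamma$ can only cross near $e_{pq}$ from one side'' into the exact inequalities, in particular choosing the right base point $x_0\in e_{pq}$ and correctly relating $d(x_0,r_2)$ to the height of $K_2$, its shape parameter $\sigma_{K_2}$, and the adjacent angle $\vartheta_{K_1}^{\mathrm{adj}}$. The curvature correction (replacing ``line'' by ``curve'') is routine given Proposition~\ref{prop:gradpi} and the definition of $C_K^h$, but it must be tracked carefully so that the constant $\tfrac12$ in \eqref{eq:Ch} is enough; I anticipate this is precisely why the authors relegate the argument to Appendix~\ref{sec:topology} together with Lemma~\ref{cor:two-vertex}, whose proof shares the same geometry.
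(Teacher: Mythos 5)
Your overall logical structure is sound and, in fact, the contrapositive of the paper's: you assume $e_{pq}$ is the positive edge of two positively cut triangles and use the triangulation axiom to place the apexes $r_1,r_2$ on opposite sides of the line through $p$ and $q$, aiming to contradict $\phi(r_1),\phi(r_2)<0$. The paper runs the same dichotomy the other way: Proposition~\ref{lem:top} shows that for a positively cut triangle with positive edge $e_{ab}$, the apex must lie on the side of the line through $ab$ determined by $\text{sgn}\bigl(\hat{T}(\pi(a))\cdot\hat{U}_{ab}\bigr)$ --- a quantity that depends only on the edge and $\Gamma$, not on the triangle. Applied to $K=(a,b,c)$ and $\tilde K=(a,b,d)$ this forces $c$ and $d$ onto the \emph{same} side, so the open triangles overlap, contradicting the triangulation axiom directly. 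Your route would need the same normal--direction control, i.e.\ Proposition~\ref{lem:top} in contrapositive; it is not a way to avoid it.

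Where the proposal genuinely breaks is in the concrete estimate. The Lipschitz bound $\phi(r_2)\ge \phi(x_0)-d(x_0,r_2)$ cannot yield $\phi(r_2)\ge 0$: by \eqref{eq:dist-estimate} you only know $\phi(x_0)>-C_K^hh_K^2$ on the positive edge (and $\phi(x_0)$ can in fact vanish there), whereas $d(x_0,r_2)$ is bounded below by roughly the height of $K_2$ over $e_{pq}$, an $O(h_K)$ quantity. So the proposed inequality $d(x_0,r_2)<\phi(x_0)$ simply does not hold under the stated hypotheses. What actually determines the sign of $\phi(r_2)$ is the component of $r_2-x_0$ along $\hat{N}(\pi(x_0))$, and controlling that is precisely the content of Propositions~\ref{lem:anglebd1}, \ref{prop:angle-ac}, \ref{lem:sameside} and \ref{lem:top}, which use Corollary~\ref{cor:phi-estimate} (the second-order Taylor estimate for $\phi$) rather than a first-order Lipschitz bound. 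You have also misidentified the mesh hypothesis at play: condition~\eqref{eq:Ch} on the adjacent angle $\vartheta_K^{\text{adj}}$ does not enter Lemma~\ref{lem:uniq-id} at all --- it is first used in the proof of Lemma~\ref{lem:connected}. The assumptions that drive the present lemma, through the cited propositions, are the acute conditioning angle condition~\eqref{eq:cond-angle} and~\eqref{eq:sigmaCh}.
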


\begin{lemma}
  Precisely two distinct positive edges intersect at each vertex in
  $\Gamma_h$.
  \label{cor:two-vertex}
\end{lemma}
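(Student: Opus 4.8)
The plan is to fix a vertex $v\in\Gamma_h$ and count the positive edges incident to it by examining the cyclic sequence of triangles of ${\cal T}_h$ around $v$. First I would record the two defining features of a positive edge incident to $v$: it joins $v$ to another vertex $w$ with $\phi(v),\phi(w)\ge 0$, and it is an edge of some positively cut triangle, i.e.\ a triangle with exactly two nonnegative vertices. Since $v\in\Gamma_h$, at least one such edge exists; the goal is to show there are neither fewer nor more than two. I would organize the argument around the link of $v$: list the triangles $K_1,\dots,K_m$ sharing $v$ in cyclic order (using that ${\cal T}_h$ triangulates a polygon, so the link of an interior vertex is a cycle, and of a boundary vertex a path — I should note that vertices of $\Gamma_h$ must be interior, since $\Gamma$ is immersed in ${\cal T}_h$ and hence $\Gamma_h$ lies in the interior of the triangulated region; this may itself need a short argument or a reference to Proposition \ref{prop:well-def}). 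Label each spoke edge $e_{v w_i}$ by the sign of $\phi(w_i)$, and each triangle $K_i$ by how many of its three vertices are nonnegative.

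Next I would use the mesh restrictions to pin down the sign pattern around $v$. The key geometric input is \eqref{eq:dist-estimate} (and Proposition \ref{prop:well-def}(ii)): on a positive edge $\phi$ is bounded below by $-C_K^h h_K^2$ and above by $h_K$, and more importantly the sign data at vertices of positively cut triangles near $v$ is constrained because $\Gamma$ crosses each such triangle in a controlled way. Concretely, a positively cut triangle incident to $v$ has $\phi(v)\ge 0$ (so $v$ is one of its two nonnegative vertices) together with exactly one negative vertex; a positive edge at $v$ is then the edge of such a triangle opposite the negative vertex, hence of the form $e_{vw_i}$ with $\phi(w_i)\ge 0$ and with the \emph{next or previous} spoke neighbor negative. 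So positive edges at $v$ correspond exactly to places in the cyclic sign sequence $(\mathrm{sign}\,\phi(w_1),\dots,\mathrm{sign}\,\phi(w_m))$ where a nonnegative entry is adjacent (through a triangle $K_i$) to a negative entry. The claim "precisely two" therefore reduces to: the cyclic sequence of signs of the $w_i$ has exactly one maximal run of negative entries (equivalently, exactly one maximal run of nonnegative entries), so there are exactly two sign changes, each producing one positive edge.

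To establish that single-run property I would argue that $\Gamma$ separates a small neighborhood $B(v,\varepsilon)$ of $v$ into exactly two pieces (one in $\Omega$, one outside), because each connected component of $\Gamma$ is a Jordan curve of bounded curvature and, by \eqref{eq:h-global} and Proposition \ref{prop:well-def}, locally near $v$ the curve behaves like a single $C^1$ arc through the star of $v$ — in particular it cannot double back inside a single star of radius $\lesssim h_K < r_n$. Hence the vertices $w_i$ with $\phi(w_i)<0$ are exactly those falling on the $\Omega$-side, and they form a single contiguous cyclic block; likewise the nonnegative ones form the complementary block. I expect the main obstacle to be precisely this step: ruling out that $\Gamma$ enters and leaves the star of $v$ more than once (which would create extra sign changes, hence extra positive edges) or that a "negative" triangle at $v$ is actually positively cut in a way that creates a spurious positive edge. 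This is exactly the kind of careful-but-elementary topological bookkeeping the introduction warns is laborious, and it is where \eqref{eq:h-global}, the curvature bound $M_K$, and \eqref{eq:dist-estimate} do the real work; once the two-run structure of the sign sequence is in hand, the count of positive edges as "two" is immediate, and combining with Lemma \ref{lem:uniq-id} shows these two edges are genuinely distinct and each carried by its own positively cut triangle.
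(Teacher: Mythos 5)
Your plan -- examine the cyclic sign pattern of $\phi$ at the link vertices of $v$ and show there is exactly one maximal run of negative entries (hence exactly two boundary transitions, hence two positive edges, with length~$\geq 2$ forced by Lemma~\ref{lem:uniq-id}) -- is a reasonable reformulation of the claim, but the central step is asserted rather than proved, and it is precisely the step that carries all the difficulty. You flag this yourself; let me be concrete about why the heuristic does not close the argument, and how the paper avoids it.

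The gap is the single-block property. You want to deduce it from ``$\Gamma$ separates a small neighborhood of $v$ into two pieces and cannot double back inside the star of $v$ of radius~$\lesssim h_K<r_n$.'' Two problems. First, the star of $v$ is \emph{not} bounded by $h_K$: the mesh restrictions~\eqref{eq:mesh-restrictions} constrain only positively cut triangles (and, through $\vartheta_K^\text{adj}$, the triangle across a crossed positive edge), so a triangle in the star whose three vertices all satisfy $\phi\geq 0$ may be arbitrarily large, and the link vertices need not all lie in a single small ball. Second, even restricting to link vertices within $r_n$ of $\Gamma$, ``$\Gamma$ is locally a single $C^1$ arc'' is not a consequence of $C^2$-regularity alone -- it is a statement about the reach of $\Gamma$ -- and converting it into ``the negative $w_i$ lie on the $\Omega$-side of one arc and form a contiguous cyclic block'' requires a separation argument over a region that is not obviously a single small disk. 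Nothing in Proposition~\ref{prop:well-def} or~\eqref{eq:h-global} hands you this; the single-run claim is essentially equivalent to the lemma itself, and you have not supplied an independent proof of it.

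The paper's route is genuinely different and worth contrasting. It splits the count. For \emph{at least two} (Lemma~\ref{lem:closed}) it walks around the link of $v$ starting from a single assumed positive edge $e_{vb_0}$, continuing across each spoke $e_{vb_i}$ with $\phi(b_i)<0$ by Proposition~\ref{prop:enum} (which uses only the immersion hypothesis, sidestepping your worry about whether $v$ is interior), and returns to $b_0$ to exhibit two distinct positively cut triangles sharing the positive edge $e_{vb_0}$ -- contradicting Lemma~\ref{lem:uniq-id}. For \emph{at most two} (Lemma~\ref{lem:simple}) it shows that any two distinct positive edges $e_{vp}$, $e_{vq}$ satisfy $\mathrm{sgn}(\hat T(\pi(v))\cdot\hat U_{vp})=-\mathrm{sgn}(\hat T(\pi(v))\cdot\hat U_{vq})\neq 0$, so at most one positive edge lies on each side of the local tangent. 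This is again a walk around the link, but the contradiction is driven by the \emph{angle} estimates: Proposition~\ref{lem:top} fixes the sign of $\hat U_{ca}\cdot\hat U_{ab}^\perp$ for a positively cut triangle, and Corollary~\ref{cor:angle-order} fixes the angular ordering $\alpha_b<\alpha_c$ of the spoke directions. These are quantitative consequences of the acute conditioning angle assumption~\eqref{eq:cond-angle} and the size bounds -- not a topological separation property of $\Gamma$. Your sign-pattern picture is exactly the conclusion of Lemma~\ref{lem:simple}, but the paper reaches it through angle inequalities at $\pi(v)$ rather than by reasoning about how $\Gamma$ slices a neighborhood of $v$; if you wanted to pursue your route, you would need to \emph{prove} the single-block property, and the natural tools for that are precisely the angle estimates the paper uses, which would collapse your argument into theirs.
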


\begin{lemma}
  Let $\gamma_h$ be a connected component of $\Gamma_h$. Then
  $\gamma_h$ is a simple, closed curve that can be represented as as
  \begin{align}
    \gamma_h &= \bigcup_{i=0}^ne_{v_iv_{(i+1)\text{mod}\,(n)}}, \label{eq:simple-closed}
  \end{align}
  where $v_0,\ldots,v_n$ are all the distinct vertices in $\gamma_h$
  and $2\leq n <\infty$.
  \label{lem:simple-closed}
\end{lemma}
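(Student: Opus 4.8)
The plan is to bootstrap from the two preceding lemmas: Lemma~\ref{lem:uniq-id} tells us each positive edge lies in exactly one positively cut triangle, and Lemma~\ref{cor:two-vertex} tells us exactly two positive edges meet at each vertex of $\Gamma_h$. Together these say that $\Gamma_h$, viewed as a one-dimensional simplicial complex whose $1$-cells are the positive edges and whose $0$-cells are their endpoints, is a $2$-regular graph: every vertex has degree exactly $2$. A standard graph-theoretic fact is that a connected graph in which every vertex has degree $2$ is a single cycle (equivalently, it has an Eulerian circuit that visits each vertex once), and a finite cycle realized by straight segments in $\mathbb{R}^2$ whose only intersections are at shared endpoints is a simple closed curve, i.e.\ homeomorphic to $S^1$. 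So the first step is to make precise the identification of $\gamma_h$ with such a graph and invoke this fact to produce the cyclic enumeration $v_0,\dots,v_n$ with $\gamma_h=\bigcup_{i=0}^n e_{v_i v_{(i+1)\bmod(n+1)}}$.

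More concretely, I would argue as follows. Fix a connected component $\gamma_h$ and pick any vertex $v_0\in\gamma_h$; by Lemma~\ref{cor:two-vertex} it is an endpoint of exactly two positive edges. Pick one of them, call its other endpoint $v_1$; at $v_1$ there are again exactly two positive edges, one of which is $e_{v_0v_1}$, so there is a unique ``next'' edge leading to a vertex $v_2\neq v_0$ (it cannot lead back along $e_{v_0v_1}$). Continuing this walk, and using that $\Gamma_h$ has only finitely many positive edges (each positively cut triangle contributes boundedly many, and $\mathcal{T}_h$ is finite), the walk must eventually revisit a vertex; the first revisited vertex must be $v_0$ itself, because any intermediate vertex already has its two incident edges used up by the walk, so it cannot be re-entered first. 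This closes the walk into a cycle $v_0,v_1,\dots,v_n,v_0$ with all $v_i$ distinct. Finally, connectedness of $\gamma_h$ forces the cycle to exhaust it: any positive edge of $\gamma_h$ not on the cycle would have an endpoint on the cycle (by connectedness of the component), giving that endpoint a third incident positive edge and contradicting Lemma~\ref{cor:two-vertex}. That $n\ge 2$ follows because a cycle needs at least three vertices when edges are non-degenerate segments with distinct endpoints (a ``$2$-cycle'' would force two positive edges with the same endpoint pair, which is impossible in a triangulation).

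For the topological conclusion, I would note that the map $S^1\to\gamma_h$ obtained by traversing the edges $e_{v_0v_1},e_{v_1v_2},\dots,e_{v_nv_0}$ at constant speed on successive arcs of $S^1$ is continuous, surjective, and injective: injectivity on the relative interiors of the edges is clear since distinct edges of a triangulation intersect only in shared vertices, and injectivity at the breakpoints follows because the $v_i$ are distinct and Lemma~\ref{cor:two-vertex} guarantees no two edges of the cycle share an endpoint other than as consecutive edges. Since $S^1$ is compact and $\gamma_h\subset\mathbb{R}^2$ is Hausdorff, Theorem~\ref{thm:inv-cont} upgrades this to a homeomorphism, so $\gamma_h$ is a simple closed curve.

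The main obstacle I anticipate is not the graph-theoretic cycle argument, which is routine, but rather being careful that the walk is genuinely well defined and closes up at $v_0$ and nowhere else --- that is, correctly using the degree-$2$ condition to rule out the walk prematurely closing into a smaller loop or branching. This is where one must lean precisely on Lemma~\ref{cor:two-vertex}. A secondary point requiring a line of care is ruling out $n=1$ (no doubled edges between the same pair of vertices), which follows from the definition of a triangulation in~\S\ref{sec:preliminaries} since the connectivity table consists of $3$-tuples modulo permutation and an edge is determined by its endpoint pair.
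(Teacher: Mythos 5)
Your proof is correct and takes essentially the same route as the paper's: use Lemma~\ref{cor:two-vertex} to walk edge-by-edge along $\gamma_h$, observe the walk must close into a cycle, and invoke connectedness of $\gamma_h$ to show the cycle exhausts all vertices. The only cosmetic difference is in the order of operations: the paper inductively shows the walk cannot revisit \emph{any} earlier vertex (including $v_0$) until all vertices of the component have been enumerated, using the connectedness argument inside the induction step, whereas you allow the walk to close at $v_0$ first and then separately use connectedness to rule out any leftover edge incident to the cycle. These are logically equivalent. You also make the topological conclusion more explicit by constructing the circle homeomorphism and invoking Theorem~\ref{thm:inv-cont}, which the paper leaves implicit with the remark that the closed-curve property ``follows immediately from such a representation.''
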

\begin{proof}
  We will only prove \eqref{eq:simple-closed}. That $\gamma_h$ is a
  simple and closed curve follows immediately from such a
  representation.

  Denote the number of vertices in $\gamma_h$ by $n+1$ for some integer $n$.
  Since $\gamma_h$ is non-empty, it contains at least one positive edge,
  say $e_{v_0v_1}$ with vertices $v_0$ and $v_1$.  Lemma
  \ref{cor:two-vertex} shows that precisely two positive edges
  intersect at $v_1$. Therefore, we can find vertex $v_2\in
  \gamma_h$ different from $v_0,v_1$ such that $e_{v_1v_2}$ a
  positive edge. This shows that $n\geq 2$. Of course $n<\infty$
  because there are only finitely many vertices in ${\cal T}_h$.
  
  We have identified vertices $v_0,v_1$ and $v_2$ such that
  $e_{v_0v_1},e_{v_1v_2}\subset \gamma_h$.  Suppose that we have
  identified vertices $v_0,v_1,\ldots,v_{k-1}$ for
  $k\in\{2,\ldots,n\}$ such that $e_{v_iv_{(i+1)}}\subset \gamma_h$
  for each $0\leq i\leq k-2$.  We show how to identify vertex $v_k$
  such that $e_{v_{(k-1)}v_k}\subset \gamma_h$.  Lemma
  \ref{cor:two-vertex} shows that precisely two positive edges
  intersect at $v_{(k-1)}$. One of them is
  $e_{v_{(k-2)}v_{(k-1)}}$. Let $v_k$ be such that $e_{v_{(k-1)}v_k}$
  is the other positive edge.  While $v_k$ is different from
  $v_{(k-2)}$ and $v_{(k-1)}$ by definition, it remains to be shown
  that $v_k\neq v_i$ for $0\leq i< k-2$. To this end, note that for
  $1\leq i< k-2$, we have already found two positive edges that
  intersect at $v_i$, namely $e_{v_{(i-1)}v_i}$ and
  $e_{v_iv_{(i+1)}}$. Therefore, it follows from Lemma
  \ref{cor:two-vertex} that $e_{v_iv_{(k-1)}}$ cannot be a positive
  edge for $1\leq i < k-2$. Hence $v_k\neq v_i$ for $1\leq i <
  k-2$. On the other hand, suppose that $v_k=v_0$. Then
  $e_{v_0v_{(k-1)}}$ and $e_{v_0v_1}$ are the two positive edges
  intersecting at $v_0$. In particular, this implies that for each
  $0\leq i\leq k-1$, we have found the two positive edges that
  intersect at vertex $v_i$.  Noting that $n>k-1$, let $w$ be any
  vertex in $\gamma_h$ different from $v_0,\ldots,v_{(k-1)}$. It
  follows from Lemma \ref{cor:two-vertex} that $e_{v_iw}$ cannot
  be a positive edge for any $0\leq i\leq k-1$. This contradicts the
  assumption that $\gamma_h$ is a connected set. Hence $v_k\neq v_0$.
  
  Repeating the above step, we identify all the distinct vertices
  $v_0,\ldots, v_n$ in $\gamma_h$ such that $e_{v_iv_{(i+1)}}$ is a
  positive edge for $0\leq i< n$. All vertices in $\gamma_h$ can be found this way
  because $\gamma_h$ is connected. It only remains to show that $e_{v_nv_0}\subset \gamma_h$.
  The argument is similar to the one given
  above. Lemma \ref{cor:two-vertex} shows that precisely two
  positive edges intersect at $v_n$. One of them is
  $e_{v_{(n-1)}v_n}$. Since $v_0,\ldots,v_n$ are all the vertices in
  $\gamma_h$, the other edge has to be $e_{v_nv_i}$ for some $0\leq
  i\leq n-2$. However, $e_{v_iv_n}$ cannot be a positive edge for
  $1\leq i< n-1$ since we have already identified $e_{v_{(i-1)}v_i}$
  and $e_{v_iv_{(i+1)}}$ as the two positive edges intersecting at
  $v_i$. Hence we conclude that $e_{v_nv_0}$ is a positive edge of
  $\gamma_h$.
\end{proof}


\section{Injectivity on connected components of $\Gamma_h$}
\label{sec:pi-injective}
The main result of this section is the following lemma.
\begin{lemma}
  Let $\gamma$ and $\gamma_h$ be connected components of $\Gamma$ and
  $\Gamma_h$ respectively, such that $\gamma\cap \pi(\gamma_h)\neq
  \emptyset$. Then $\pi:\gamma_h\rightarrow \gamma$ is a
  homeomorphism.
  \label{lem:homeo}
\end{lemma}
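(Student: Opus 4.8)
The plan is to assemble the three ingredients already advertised in the outline: (a) $\gamma_h$ is a simple closed curve (Lemma \ref{lem:simple-closed}), (b) $\pi$ is continuous on $\Gamma_h$ (hence on $\gamma_h$), and (c) $\pi$ restricted to $\gamma_h$ is injective, and then invoke Theorem \ref{thm:inv-cont} to upgrade a continuous injection out of a compact space into a homeomorphism onto its image. The only genuinely new work is establishing the injectivity of $\pi|_{\gamma_h}$ and then identifying the image $\pi(\gamma_h)$ with the whole component $\gamma$. For the first I would proceed in two nested steps: first injectivity on a single positive edge (already done in Lemma \ref{lem:edge-injective}), then injectivity on a pair of positive edges sharing a vertex (Proposition \ref{lem:pair-injective}, which the outline references), and finally a global argument on $\gamma_h$. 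The global step is where I expect the main obstacle.

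For the global injectivity argument, I would use the structure from Lemma \ref{lem:simple-closed}: write $\gamma_h = \bigcup_{i=0}^n e_{v_i v_{(i+1)\bmod n}}$, parameterize $\gamma_h$ by a continuous loop $c:S^1\to\gamma_h$ going once around, and consider the composition $\pi\circ c:S^1\to\gamma$. Since $\pi\circ c$ is continuous and $\gamma$ is itself a Jordan curve, I would like to conclude that $\pi\circ c$ is injective, equivalently that it wraps around $\gamma$ exactly once. The pairwise injectivity (consecutive edges) rules out $\pi$ folding back at a vertex, and edge-wise injectivity rules out folding within an edge; what remains is to exclude the possibility that two \emph{non-adjacent} edges of $\gamma_h$ project to overlapping arcs of $\gamma$. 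Here I would exploit the distance and angle estimates of Appendix \ref{sec:distances-angles} together with the tubular-neighborhood diffeomorphism of Theorem \ref{thm:nbd}: each point $x\in\gamma_h$ satisfies $x = \pi(x) + \phi(x)\hat N(\pi(x))$ with $|\phi(x)|$ controlled by $C_K^h h_K^2$, so $\gamma_h$ lies in a thin tube around $\gamma$; a self-overlap of the projection would force $\gamma_h$ to ``double back'' inside the tube, contradicting that, by Lemma \ref{cor:two-vertex}, exactly two positive edges meet at each vertex and that $\pi$ is a local diffeomorphism with Jacobian bounded below (Lemma \ref{lem:jac}). Concretely, I would argue that $\pi\circ c$ has a well-defined, locally constant orientation (the sign of $\hat T(\pi(x))\cdot \hat U_{ab}$, which Proposition \ref{prop:angle-normal} keeps away from zero), hence is monotone around the loop, hence injective once one checks the total winding is $\pm 1$.

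Having injectivity, Theorem \ref{thm:inv-cont} gives that $\pi:\gamma_h\to\pi(\gamma_h)$ is a homeomorphism, so $\pi(\gamma_h)$ is a Jordan curve contained in $\gamma$. But $\gamma$ is connected and, being a $C^2$-regular-boundary component, is itself a Jordan curve; a Jordan subcurve of a Jordan curve that is also closed must be the whole curve (a proper closed connected subset of $S^1$ is an arc, not all of $S^1$, and cannot be a Jordan curve). Hence $\pi(\gamma_h)=\gamma$, and combined with the hypothesis $\gamma\cap\pi(\gamma_h)\neq\emptyset$ — which is what pins down \emph{this} $\gamma$ as the image — we conclude $\pi:\gamma_h\to\gamma$ is a homeomorphism.

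The step I expect to be hardest is ruling out self-overlap of the projection on non-adjacent edges, i.e. promoting the local/pairwise injectivity to global injectivity around the loop. The clean way to do this is the winding-number/monotonicity route sketched above; the delicate point is verifying that the total winding of $\pi\circ c$ around $\gamma$ is exactly one and not a larger integer, which is where the thinness of the tube (the $\phi$-estimate \eqref{eq:dist-estimate}) and the uniform lower bound on the Jacobian \eqref{eq:jac-estimate} must be combined with the finiteness $n<\infty$ of the vertex set. Everything else is a direct application of Theorem \ref{thm:inv-cont} and the Jordan-curve characterization of the components of $\Gamma$.
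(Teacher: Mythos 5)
Your overall architecture is the same as the paper's: establish local injectivity (edge, then pair of adjacent edges), promote it to injectivity on the whole loop $\gamma_h$, then apply Theorem \ref{thm:inv-cont}, and finish surjectivity by the Jordan-curve rigidity of $\gamma$. The first two stages are correctly delegated to Lemma \ref{lem:edge-injective} and Proposition \ref{lem:pair-injective}, and your ``locally monotone because $\hat{T}(\pi(x))\cdot\hat{U}_{ab}$ never vanishes'' observation matches the mechanism behind the paper's Proposition \ref{prop:pv}. The surjectivity argument at the end is fine, though the paper runs it in the opposite order (surjectivity first, directly from connectedness of $\gamma_h$ and the Jordan property of $\gamma$, before setting up the parameterizations).

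The genuine gap is in your promotion from local monotonicity to global injectivity, i.e.\ the claim that the winding number of $\pi\circ c$ is $\pm 1$ ``where the thinness of the tube and the uniform lower bound on the Jacobian must be combined with the finiteness $n<\infty$.'' None of these three ingredients controls the winding number. A thin annular tube around $\gamma$ admits embedded simple closed curves winding $k$ times for any $k$; the Jacobian lower bound merely prevents the projection from stalling on each edge and says nothing about the total number of revolutions; and finiteness of the vertex set just bounds the winding by $n$, not by $1$. So the route you announce does not close the argument. What actually excludes higher winding in the paper is Proposition \ref{prop:oneloop}, and it uses a different invariant: the signed distance $\phi$. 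If the loop visited the fibre over $\pi(v_0)$ more than once, each visit would sit at a distinct $\phi$-level by the tubular-neighborhood structure of Theorem \ref{thm:nbd}; tracking these levels via the comparison functions $\Psi_i = \phi\circ(\pi|_{\gamma_h^i})^{-1}\circ\beta$ and invoking the intermediate value theorem on a difference $\Psi_0-\Psi_k$ produces two points of $\gamma_h$ with the same image under $\pi$ and the same $\phi$-value, hence (by the diffeomorphism $(\phi,\pi)$) the same point — a self-intersection, contradicting Lemma \ref{lem:simple-closed}. That $\phi$-level separation is the idea your proposal is missing; once you replace the ``thin tube $+$ Jacobian $\Rightarrow$ winding $1$'' step with it, the rest of your plan goes through.
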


Surjectivity of $\pi:\gamma_h\rightarrow \gamma$ in the above lemma
is simple. Continuity of $\pi$ over the connected set $\gamma_h$
implies that $\pi(\gamma_h)$ is a connected subset of $\Gamma$. Since
$\gamma$ is a connected component of $\Gamma$ and $\gamma\cap
\pi(\gamma_h)\neq \emptyset$, $\pi(\gamma_h)\subseteq \gamma$. We also
know that $\pi(\gamma_h)$ is a closed curve because $\gamma_h$ is a
closed curve (Lemma \ref{lem:simple-closed}). Since $\gamma$ is a
Jordan curve, the only closed and connected curve contained in
$\gamma$ is either a point in $\gamma$ or $\gamma$ itself. In view of
Lemma \ref{lem:edge-injective},
$\pi(\gamma_h)$ is not a point, and hence $\pi(\gamma_h)=\gamma$

The critical step is proving injectivity. For this, we extend the result of Lemma
\ref{lem:edge-injective} in Proposition \ref{lem:pair-injective} to
show that $\pi$ is injective over any two intersecting positive edges
in $\gamma_h$ (or $\Gamma_h$). This result does not suffice for an
argument to prove injectivity by considering distinct points in
$\gamma_h$ whose images in $\gamma$ coincide and then arrive a
contradiction.  Instead, we consider a subdivision of $\gamma_h$ into finitely many connected subsets. For a specific choice of these subsets, we demonstrate using Proposition \ref{lem:pair-injective} that $\pi$ is injective over each of these subsets (Proposition \ref{prop:pv}). Then we argue that there can be only one such subset and that it has to equal $\gamma_h$ itself (Proposition \ref{prop:oneloop}). 

\begin{proposition}
  If $e_{ap}$ and $e_{aq}$ are distinct positive edges in $\Gamma_h$,
  then $\pi:e_{ap}\cup e_{aq}\rightarrow \Gamma$ is injective.
  \label{lem:pair-injective}
\end{proposition}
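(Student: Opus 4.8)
The plan is to argue by contradiction, as in the proof of Lemma~\ref{lem:edge-injective}. Suppose $x\in e_{ap}$ and $y\in e_{aq}$ with $\pi(x)=\pi(y)=:\xi$; the goal is to conclude $x=y$. First I would dispose of the easy possibilities. If $\phi(x)=\phi(y)$, then Theorem~\ref{thm:nbd} gives $x=\xi+\phi(x)\hat N(\xi)=\xi+\phi(y)\hat N(\xi)=y$, and since $e_{ap}$ and $e_{aq}$ are distinct edges they meet only at $a$, so $x=y=a$ and there is nothing more to show. So assume $\phi(x)\neq\phi(y)$; then $x\neq y$ and, subtracting the representations $x=\xi+\phi(x)\hat N(\xi)$, $y=\xi+\phi(y)\hat N(\xi)$, the vector $x-y$ is a nonzero multiple of $\hat N(\xi)$. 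If $x=a$ then $a-y$ is a nonzero multiple of $\hat N(\xi)$ while $y-a$ is a nonzero multiple of $\hat U_{aq}$, forcing $\hat U_{aq}\parallel\hat N(\pi(y))$ and contradicting Proposition~\ref{prop:angle-normal} (recall $\xi=\pi(y)$); hence $x\neq a$, and symmetrically $y\neq a$.

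Next I would extract the key algebraic consequence. Writing $x=a+d(a,x)\hat U_{ap}$ and $y=a+d(a,y)\hat U_{aq}$, and using $(x-\xi)\cdot\hat T(\xi)=0=(y-\xi)\cdot\hat T(\xi)$ (from $x-\xi=\phi(x)\hat N(\xi)$, etc.), one computes the tangential component of $a-\xi$ in two ways to obtain
\begin{align}
  d(a,x)\,\bigl(\hat U_{ap}\cdot\hat T(\xi)\bigr)=d(a,y)\,\bigl(\hat U_{aq}\cdot\hat T(\xi)\bigr). \notag
\end{align}
Since $d(a,x),d(a,y)>0$ by the previous paragraph and both dot products are nonzero by Proposition~\ref{prop:angle-normal} (as $\xi=\pi(x)=\pi(y)$), this forces $\hat U_{ap}\cdot\hat T(\xi)$ and $\hat U_{aq}\cdot\hat T(\xi)$ to have the \emph{same} sign. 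On the other hand, by Proposition~\ref{prop:angle-normal} the function $\hat U_{ap}\cdot\hat T(\pi(\cdot))$ never vanishes on the connected set $e_{ap}$, so its sign is a constant $\varepsilon_1\in\{-1,+1\}$; define $\varepsilon_2$ analogously from $e_{aq}$. Because $\xi=\pi(x)$ with $x\in e_{ap}$ and $\xi=\pi(y)$ with $y\in e_{aq}$, the displayed identity reads exactly $\varepsilon_1=\varepsilon_2$. So the contradiction I am after is $\varepsilon_1\neq\varepsilon_2$, equivalently that the two positive edges at $a$ leave $\pi(a)$ in opposite directions along $\Gamma$.

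Proving $\varepsilon_1\neq\varepsilon_2$ is where the work lies. By Lemma~\ref{lem:uniq-id} each of $e_{ap},e_{aq}$ is the positive edge of a unique positively cut triangle, say $K_1=(a,p,c_1)$ and $K_2=(a,q,c_2)$, and $K_1\neq K_2$ since one triangle cannot carry two positive edges (that would require three vertices with $\phi\geq0$). Inside $K_1$ the curve $\Gamma$ separates $c_1$ from $e_{ap}$, meeting $e_{ac_1}$ and $e_{pc_1}$ by Proposition~\ref{prop:well-def}(i), and similarly in $K_2$; combined with the disjointness of $\mathrm{int}\,K_1$ and $\mathrm{int}\,K_2$ and with Lemma~\ref{cor:two-vertex} (so $e_{ap}$ and $e_{aq}$ are the \emph{only} positive edges at $a$), one shows that the portion of $\Gamma$ lying in a neighborhood of $a$ is contained in the open angular sector at $a$ bounded by $e_{ap}$ and $e_{aq}$ on the side of $c_1,c_2$. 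Since $\pi(a)$ is the nearest point of $\Gamma$ to $a$ and $a=\pi(a)+\phi(a)\hat N(\pi(a))$, the outward normal $\hat N(\pi(a))$ points out of that sector, whence the tangent line at $\pi(a)$ crosses the sector transversally with $+\hat T(\pi(a))$ aimed toward one of the two edges and $-\hat T(\pi(a))$ toward the other; this yields opposite signs for $\hat U_{ap}\cdot\hat T(\pi(a))$ and $\hat U_{aq}\cdot\hat T(\pi(a))$, i.e. $\varepsilon_1\neq\varepsilon_2$. Turning the phrases ``near $a$'' and ``transversally'' into estimates is the delicate part: it uses the size conditions \eqref{eq:h-global} and \eqref{eq:sigmaCh} to bound how far $\Gamma$ can stray and how much its tangent can turn over the relevant region, and the angle bound \eqref{eq:angle-normal} so that neither edge is parallel to a local normal and both $\hat U_{ap},\hat U_{aq}$ are nearly tangential at $\pi(a)$; a short case split on which endpoint of $e_{ap}$ (resp. $e_{aq}$) is the proximal vertex — this fixes the orientation in which \eqref{eq:angle-normal} applies and is where \eqref{eq:Ch} may enter — is the main obstacle to a tidy write-up.
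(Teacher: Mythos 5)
Your algebraic reduction is correct and, in fact, cleaner than what the paper does. Writing $x=a+d(a,x)\hat U_{ap}$ and $y=a+d(a,y)\hat U_{aq}$, using $(x-\xi)\cdot\hat T(\xi)=(y-\xi)\cdot\hat T(\xi)=0$ where $\xi=\pi(x)=\pi(y)$, and evaluating $(a-\xi)\cdot\hat T(\xi)$ two ways indeed gives $d(a,x)\,\hat U_{ap}\cdot\hat T(\xi)=d(a,y)\,\hat U_{aq}\cdot\hat T(\xi)$; with $d(a,x),d(a,y)>0$ and Proposition~\ref{prop:angle-normal} (plus continuity of $\hat T\circ\pi$ along each edge) this forces $\hat U_{ap}\cdot\hat T(\pi(a))$ and $\hat U_{aq}\cdot\hat T(\pi(a))$ to have the \emph{same} nonzero sign. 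This bypasses the paper's construction of the auxiliary point $z$ in \eqref{eq:q5}--\eqref{eq:q6} entirely, which is a genuine simplification. Your preliminary disposal of $\phi(x)=\phi(y)$ and of $x=a$ or $y=a$ is also fine.

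The gap is in the remaining ingredient. The opposite-sign statement you then need is exactly Lemma~\ref{lem:simple}, which is the same lemma the paper's proof invokes as its one black box at the outset. You do not cite it; you attempt to re-derive it, and that attempt is a heuristic that you yourself flag as incomplete (``turning `near $a$' and `transversally' into estimates is the delicate part''). More seriously, your sketch invokes Lemma~\ref{cor:two-vertex} to conclude that $e_{ap}$ and $e_{aq}$ are the only positive edges at $a$, but in the paper Lemma~\ref{cor:two-vertex} is itself deduced from Lemmas~\ref{lem:closed} and \ref{lem:simple}, so using it to establish the sign fact is circular. The paper's actual proof of Lemma~\ref{lem:simple} (Appendix~\ref{sec:topology}) is a careful angular and combinatorial argument built on Propositions~\ref{lem:sameside} and \ref{lem:top}, Corollary~\ref{cor:angle-order}, and a clockwise enumeration of all triangles incident to $a$; the sector-containment picture you sketch does not substitute for it. If you simply cite Lemma~\ref{lem:simple} at that point, your reduction yields a complete and shorter proof than the paper's; as written, the essential step remains unestablished.
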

\begin{proof}
  Let $\alpha_i=\arccos(\hat{N}(\pi(a))\cdot\hat{U}_{ai})$ for
  $i=p,q$.  By Lemma \ref{lem:simple}, we know that
  $\hat{T}(\pi(a))\cdot\hat{U}_{ap}$ and
  $\hat{T}(\pi(a))\cdot\hat{U}_{aq}$ have opposite (non-zero) signs.
  Therefore, without loss of generality, assume that
  $\hat{T}(\pi(a))\cdot\hat{U}_{ap}<0$ and
  $\hat{T}(\pi(a))\cdot\hat{U}_{aq}>0$ so that
  \begin{subequations}
    \label{eq:q12}
    \begin{align}
      \hat{U}_{ap} &= \cos\alpha_p\,\hat{N}(\pi(a)) - \sin\alpha_p\,\hat{T}(\pi(a)), \label{eq:q1}\\
      \hat{U}_{aq} &= \cos\alpha_q\,\hat{N}(\pi(a)) +
      \sin\alpha_q\,\hat{T}(\pi(a)). \label{eq:q2}
    \end{align}
  \end{subequations}
  
  We proceed by contradiction. Suppose that $x$ and $y$ are distinct
  points in $e_{ap}\cup e_{aq}$ such that $\pi(x)=\pi(y)$. By Lemma
  \ref{lem:edge-injective}, we know that $\pi$ is injective over
  $e_{ap}$ and $e_{aq}$ respectively. Therefore, $x$ and $y$ cannot
  both belong to either $e_{ap}$ or $e_{aq}$.  Without loss of
  generality, assume that $x\in e_{ap}\setminus\{a\}$ and $y\in
  e_{aq}\setminus\{a\}$. In the following, we identify a point $z\in
  B(\Gamma, r_n)$ such that $\pi(z)$ equals both $\pi(x)$ and
  $\pi(a)$. This will contradict Lemma \ref{lem:edge-injective}.

  Let $0<\lambda_x\leq d(a,p)$ and $0<\lambda_y\leq d(a,q)$ be such
  that
  \begin{subequations}
    \label{eq:q34}
    \begin{align}
      x &= a + \lambda_x\hat{U}_{ap}, \label{eq:q3}\\
      \text{and} \quad 
      y &= a + \lambda_y\hat{U}_{aq}. \label{eq:q4}
    \end{align}
  \end{subequations}
  Consider the point
  \begin{align}
    z &= \pi(x) + \xi\hat{N}(\pi(x)), \label{eq:q5}\\
    \text{where}~ ~ \xi &=
    \frac{\phi(y)\lambda_x\sin\alpha_p+\phi(x)\lambda_y\sin\alpha_q}{\lambda_x\sin\alpha_p+\lambda_y\sin\alpha_q}. \label{eq:q6}
  \end{align}
  Since $\lambda_x,\lambda_y$ are strictly positive (by definition)
  and $\sin\alpha_p,\sin\alpha_q$ are strictly positive (Proposition
  \ref{prop:angle-normal}), we know that
  $\lambda_x\sin\alpha_p+\lambda_y\sin\alpha_q\neq 0$. Hence $z$ given
  by \eqref{eq:q5} is well defined. Moreover, from $|\phi(x)|\leq h_K$
  and $|\phi(y)|\leq h_K$ (Proposition \ref{prop:well-def}), it
  follows from \eqref{eq:q6} that $|\xi|\leq h_K$. Since $h_K<r_n$ by
  \eqref{eq:h-global}, $z\in B(\Gamma, r_n)$. Therefore from
  \eqref{eq:q5} and Theorem \ref{thm:nbd}, we conclude that
  $\pi(z)=\pi(x)$.

  Next we show that $\pi(z)=\pi(a)$ as well. From Theorem
  \ref{thm:nbd} and the assumption that $\pi(y)=\pi(x)$, we have
  \begin{subequations}
    \label{eq:q78}
    \begin{align}
      x &= \pi(x) + \phi(x)\hat{N}(\pi(x)). \label{eq:q7}\\
      \text{and} \quad y &= \pi(y) + \phi(y)\hat{N}(\pi(y)) = \pi(x) +
      \phi(y)\hat{N}(\pi(x)). \label{eq:q8}
    \end{align}
  \end{subequations}
  Observe from \eqref{eq:q78} that $x\neq y \Rightarrow
  \phi(x)\neq\phi(y)$. Hence, subtracting \eqref{eq:q8} from
  \eqref{eq:q7} and using \eqref{eq:q34} yields
  \begin{align}
    \hat{N}(\pi(x)) &= \frac{x-y}{\phi(x)-\phi(y)}=
    \frac{\lambda_x\hat{U}_{ap}-\lambda_y\hat{U}_{aq}}{\phi(x)-\phi(y)}. \label{eq:q9}
  \end{align}
  From \eqref{eq:q3}, \eqref{eq:q5} and \eqref{eq:q7} we get
  \begin{align}
    z =a +
    \lambda_x\hat{U}_{ap}+(\xi-\phi(x))\hat{N}(\pi(x)). \label{eq:q9-a}
  \end{align}
  Upon using \eqref{eq:q12}, \eqref{eq:q6} and \eqref{eq:q9} in
  \eqref{eq:q9-a} and simplifying, we get
  \begin{align}
    z &= a +
    \underbrace{\frac{\lambda_x\lambda_y\sin(\alpha_p+\alpha_q)}{\lambda_x\sin\alpha_p+\lambda_y\sin\alpha_q}}_{\zeta}~\hat{N}(\pi(a))
    = \pi(a) + (\phi(a)+\zeta)\,\hat{N}(\pi(a)). \label{eq:q11}
  \end{align}
  By Theorem \ref{thm:nbd}, \eqref{eq:q11} shows that $\pi(z)=\pi(a)$.
  Hence we have shown that $\pi(x)=\pi(a)$ (both equal point
  $\pi(z)$). This contradicts the fact that $\pi$ is injective on
  $e_{ap}$.
\end{proof}

To proceed, it is convenient to introduce parameterizations for
$\gamma$ and $\gamma_h$. To this end, consider a representation for
$\gamma_h$ as in \eqref{eq:simple-closed}, where $\{v_i\}_{i=0}^n$ are
all of its  vertices. From Lemma \ref{lem:simple-closed} we know that
$\gamma_h$ is a simple, closed curve, so let a
parameterization of $\gamma_h$ be $\alpha:[0,1)\rightarrow
\gamma_h$ continuous and one-to-one such that 
\begin{romannum}
\item $\alpha(0)=\alpha(1^-)=v_0$,
\item $\alpha^{-1}(v_i)<\alpha^{-1}(v_j)$ if $0\leq i<j\leq n$,
\end{romannum}
Clearly
$e_{v_iv_{(i+1)}}=\alpha[\alpha^{-1}(v_i),\alpha^{-1}(v_{i+1})]$ for
$0\leq i<n$ and $e_{v_nv_0}=\alpha[\alpha^{-1}(v_n),1^{-})$.
Similarly, given that $\gamma$ is a simple, closed curve, we consider
a continuous and one-to-one parameterization $\beta:[0,1)\rightarrow \gamma$ of $\gamma$. As discussed at the beginning of this section, the hypotheses in Lemma \ref{lem:homeo} imply that $\pi(\gamma_h)= \gamma$, and in particular that $\pi(v_0)\in \gamma$. Therefore without loss of generality, we assume that $\beta(0)=\beta(1^-)=\pi(v_0)$. For future reference, we note that
$\beta^{-1}:\gamma\setminus \pi(v_0)\rightarrow (0,1)$ is injective
and continuous as well.

We can now define the connected subsets of $\gamma_h$ alluded to at
the beginning of \S \ref{sec:pi-injective}. Let $P_0:=\{p\in [0,1):\pi(\alpha(p))=\pi(v_0)\}$. Observe that since $\pi$ is injective over each positive edge in $\gamma_h$ (Lemma \ref{lem:edge-injective}), each of these edges has at most one point in common with $\alpha(P_0)$. Consequently, $P_0$ is a collection of finitely many points. Then, noting from the definition of $P_0$ that $0\in P_0$, we consider the following ordering for points in $P_0$:
\begin{align}
  P_0 = \{p_i\,:\, 0\le i< m<\infty, \, 0=p_0<p_1<\ldots <p_{m-1}<1\}. \label{eq:def-P0}
\end{align} 
Additionally, for convenience we set $p_m=1$. The connected subsets of $\gamma_h$ we consider are the sets $\alpha([p_i,p_{i+1}))$ for $0\leq i<m$.
\begin{proposition}
  For $0\leq i<m$, $\pi:\alpha[p_i,p_{i+1})\rightarrow \gamma$ is a bijection.
  \label{prop:pv}
\end{proposition}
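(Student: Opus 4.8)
The plan is to push everything onto the real line through the parameterizations $\alpha$ and $\beta$ and then run a one–dimensional argument. Fix $0\le i<m$ and consider $\pi\circ\alpha$ on $[p_i,p_{i+1})$. First I would record two elementary facts about its values: $\pi(\alpha(p_i))=\pi(v_0)$, since $p_i\in P_0$ by the ordering in \eqref{eq:def-P0}; and $\pi(\alpha(t))\neq\pi(v_0)$ for every $t\in(p_i,p_{i+1})$, since $P_0$ contains no point strictly between $p_i$ and $p_{i+1}$ (and, in the case $i+1=m$, $\pi(\alpha(t))\to\pi(v_0)$ only as $t\to 1^-$ because $\alpha(1^-)=v_0$). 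Using that $\pi$ is continuous on $\Gamma_h$ with $\pi(\gamma_h)=\gamma$, that $\alpha$ is continuous, and that $\beta^{-1}\colon\gamma\setminus\pi(v_0)\rightarrow(0,1)$ is continuous (as noted before \eqref{eq:def-P0}), the composite $h:=\beta^{-1}\circ\pi\circ\alpha$ is then a well-defined continuous map $(p_i,p_{i+1})\rightarrow(0,1)$.

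The key step is to prove $h$ is strictly monotone, and for this I would show $\pi\circ\alpha$ is locally injective at each $u\in(p_i,p_{i+1})$. If $\alpha(u)$ lies in the relative interior of some positive edge $e$, then by continuity of $\alpha$ a small subinterval about $u$ maps into $\mathbf{ri}\,(e)$, where $\pi$ is injective by Lemma \ref{lem:edge-injective}. If $\alpha(u)$ is a vertex of $\gamma_h$, then by Lemma \ref{cor:two-vertex} exactly two positive edges meet there, so their union is a neighbourhood of that vertex in $\gamma_h$ into which a small subinterval about $u$ maps, and $\pi$ is injective on that union by Proposition \ref{lem:pair-injective}. Composing with the homeomorphism $\beta^{-1}$, $h$ is continuous and locally injective on the interval $(p_i,p_{i+1})$; by the elementary fact that such a real function is locally strictly monotone with locally—hence globally—constant direction, $h$ is strictly monotone, in particular injective. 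Since moreover $\pi(\alpha(p_i))=\pi(v_0)$ is not among the values taken on $(p_i,p_{i+1})$, I conclude $\pi\circ\alpha$ is injective on all of $[p_i,p_{i+1})$, and therefore so is $\pi$ on $\alpha([p_i,p_{i+1}))$ because $\alpha$ is injective.

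For surjectivity I would analyse the one-sided limits of $h$. By monotonicity the limits $L^-=\lim_{t\to p_i^+}h(t)$ and $L^+=\lim_{t\to p_{i+1}^-}h(t)$ exist in $[0,1]$; since $\pi(\alpha(t))\to\pi(v_0)$ at both ends while $\beta$ is continuous and injective with $\beta(0)=\pi(v_0)$, neither limit can lie in $(0,1)$, so $L^-,L^+\in\{0,1\}$, and strict monotonicity forces $L^-\neq L^+$. Hence $\{L^-,L^+\}=\{0,1\}$, so by the intermediate value theorem $h$ maps $(p_i,p_{i+1})$ onto $(0,1)$, i.e. $\pi\circ\alpha$ maps $(p_i,p_{i+1})$ onto $\gamma\setminus\{\pi(v_0)\}$; adjoining $p_i\mapsto\pi(v_0)$ gives $\pi(\alpha([p_i,p_{i+1})))=\gamma$. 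Combined with the injectivity above, $\pi\colon\alpha([p_i,p_{i+1}))\rightarrow\gamma$ is a bijection.

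I expect the delicate point to be the local-injectivity step at the vertices of $\gamma_h$: it is exactly there that Proposition \ref{lem:pair-injective}, and beneath it the acute conditioning angle assumption, is indispensable — without injectivity of $\pi$ on a pair of incident positive edges, $\pi\circ\alpha$ could fold back at a vertex, $h$ would fail to be monotone, and the argument would collapse. Everything else is routine one-dimensional topology.
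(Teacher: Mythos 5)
Your proof is correct and takes essentially the same approach as the paper: both establish that $h=\beta^{-1}\circ\pi\circ\alpha$ is continuous and strictly monotone on $(p_i,p_{i+1})$ by invoking Proposition~\ref{lem:pair-injective} on pairs of incident positive edges, and then adjoin the endpoint $p_i$ using the fact that $\pi(v_0)$ is not attained in the open interval. The only differences are cosmetic—the paper covers $(p_i,p_{i+1})$ by overlapping two-edge intervals $(q_j,q_{j+2})$ rather than arguing local injectivity pointwise, and it deduces surjectivity from connectedness of the image inside the Jordan curve $\gamma$ rather than from the one-sided limits of $h$.
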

\begin{proof}
  To prove the proposition, we show that the map
  $\psi:=\beta^{-1}\circ\pi\circ\alpha$ is injective over the interval
  $(p_i,p_{i+1})$. To this end, we will need to consider the
  (positive) edges of $\gamma_h$ contained in $\alpha[p_i,p_{i+1}]$.
  Denote the number of such edges by $k$, set $v_a=\alpha(p_i)$, and
  define $\{q_j\}_{j=0}^{k+1}$ as $q_j=\alpha^{-1}(v_{a+j})$.  Then,
  by the definition of $\alpha$, $\{q_j\}_{j=0}^{k+1}\subset
  [p_i,p_{i+1}]$ and
  \begin{align}
    p_i=q_0<q_1<\ldots <q_{k}<q_{k+1}=p_{i+1}. \label{eq:r0}
  \end{align}
  Notice that $k\geq 1$ because $k=0$ would imply that $\pi$ is not
  injective on the edge containing the points $\alpha(p_i)$ and
  $\alpha(p_{i+1})$, contradicting Lemma \ref{lem:edge-injective}.

  Consider $0\le j\leq k-1$. Proposition \ref{lem:pair-injective} shows
  that $\pi$ is injective over $\alpha[q_j,q_{j+2}]$, and hence $\psi$
  is injective over $(q_j,q_{j+2})$. Since $\psi$ is continuous over
  $(p_i,p_{i+1})$, it is continuous over $(q_j,q_{j+2})$ as
  well. Consequently, $\psi$ is continuous and strictly monotone over
  $(q_j,q_{j+2})$. 

  From here, we conclude that $\psi$ is continuous and strictly monotone over the interval $(q_0,q_{k+1})=(p_i,p_{i+1})$. In particular, $\psi$ is injective over $(p_i,p_{i+1})$. Since $\beta^{-1}$ is injective over $\gamma\setminus \pi(v_0)$, we get that $\pi\circ\alpha$ is injective over $(p_i,p_{i+1})$, i.e., that $\pi$ is injective over $\alpha(p_i,p_{i+1})$. From the definition of $P_0$, we know that $\pi(\alpha(p_i))=\pi(v_0)$ and that $\pi(v_0)\notin\pi(\alpha(p_i,p_{i+1}))$. Therefore we conclude that $\pi$ is in fact injective over $\alpha[p_i,p_{i+1})$. 

  Finally we show $\pi:\alpha[p_i,p_{i+1})\rightarrow \gamma$ is
  surjective.
  Since $\pi$ is continuous over the connected set $\alpha[p_i,p_{i+1})$, $\pi(\alpha[p_i,p_{i+1}))$ is a connected subset of $\gamma$. Since $\pi(\alpha(p_i))=\pi(\alpha(p_{i+1}))=\pi(v_0)$, $\pi(\alpha[p_i,p_{i+1}))$ equals either $\{\pi(v_0)\}$ or $\gamma$. Injectivity of $\pi$ over $\alpha[p_i,p_{i+1})$ rules out the former possibility.
\end{proof}

\begin{proposition}
  Let $P_0$ be as defined in \eqref{eq:def-P0}. Then $P_0= \{0\}$.  \label{prop:oneloop}
\end{proposition}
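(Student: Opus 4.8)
The plan is to argue by contradiction: assuming $m\ge 2$, I will show that the loop $\pi\circ\alpha$ winds around a suitably chosen point of the plane exactly $\pm m$ times, whereas the simple closed curve $\gamma_h$ — to which $\pi\circ\alpha$ turns out to be homotopic in the complement of that point — can wind at most once, forcing $m\le 1$; since $0=p_0\in P_0$ gives $m\ge 1$, this yields $m=1$. Throughout set $\alpha(1):=v_0$, so that $\pi\circ\alpha:[0,1]\to\gamma$ is a closed loop with $\pi\circ\alpha(0)=\pi\circ\alpha(1)=\pi(v_0)$.

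The first and decisive step is to show that $\pi\circ\alpha$ traverses $\gamma$ with a \emph{globally consistent orientation}. On the relative interior of each positive edge of $\gamma_h$ this holds because $\pi$ is there a $C^1$-diffeomorphism (Lemma \ref{lem:jac}), so the direction of travel along $\gamma$ is constant along that edge. At a vertex $v$ of $\gamma_h$, precisely two positive edges $e,e'$ are incident (Lemma \ref{cor:two-vertex}), and Proposition \ref{lem:pair-injective} gives $\pi(e)\cap\pi(e')=\{\pi(v)\}$. By Corollary \ref{cor:edge-homeo} each of $\pi(e),\pi(e')$ is a proper closed sub-arc of the circle $\gamma$ with $\pi(v)$ as an endpoint; two such arcs meeting only at $\pi(v)$ must lie on opposite sides of $\pi(v)$ along $\gamma$, so moving along $e$ and then $e'$ continues monotonically past $\pi(v)$. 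Hence the orientation is preserved through every vertex, and being locally constant on the connected set $\gamma_h$ it is globally constant. Combining this with Proposition \ref{prop:pv}: for each $0\le i<m$ the restriction $\pi\circ\alpha|_{[p_i,p_{i+1}]}$ is a loop parameterizing $\gamma$ exactly once (injective on $[p_i,p_{i+1})$ with $\pi\circ\alpha(p_i)=\pi\circ\alpha(p_{i+1})=\pi(v_0)$), and all $m$ of them run in the same sense around $\gamma$; therefore $\pi\circ\alpha$ winds $\pm m$ times around any point $p$ in the bounded component $D$ of ${\mathbb R}^2\setminus\gamma$, with a sign independent of $i$.

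It remains to produce such a point $p$ and relate the winding of $\pi\circ\alpha$ about it to that of $\gamma_h$. Consider the tubular homotopy $G(x,t)=\pi(x)+(1-t)\,\phi(x)\,\hat N(\pi(x))$ for $x\in\gamma_h$, $t\in[0,1]$, which by Theorem \ref{thm:nbd} is well defined and continuous with $G(\cdot,0)=\mathrm{id}_{\gamma_h}$, $G(\cdot,1)=\pi|_{\gamma_h}$, and $\phi(G(x,t))=(1-t)\phi(x)$; thus $\phi\circ G$ takes values in the compact interval $J$ between $0$ and the range of $\phi$ on $\gamma_h$. By \eqref{eq:dist-estimate} together with \eqref{eq:h-global} and \eqref{eq:sigmaCh} (using $\sigma_K\ge\sqrt3$, whence $C_K^hh_K^2<h_K<r_n$ and $\phi\le h_K<r_n$ on each positive edge), $J$ is a proper subinterval of $(-r_n,r_n)$. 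Picking any $\xi_0\in\gamma$, the open normal segment $\{\xi_0+s\hat N(\xi_0):|s|<r_n\}$ meets $\gamma$ only at $\xi_0$, so exactly one of its two halves lies in $D$; since that half realizes signed distances over a full interval reaching magnitude $r_n$, it contains a point $p$ with $\phi(p)\notin J$. For that $p$, $G$ avoids $p$, so $\alpha$ and $\pi\circ\alpha$ are freely homotopic loops in ${\mathbb R}^2\setminus\{p\}$ and hence have the same winding number about $p$. By the previous step this common value is $\pm m$, while $\alpha$ parameterizes the Jordan curve $\gamma_h$, so its winding number about $p$ lies in $\{-1,0,1\}$. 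Therefore $m\le 1$, and with $m\ge1$ we conclude $P_0=\{0\}$.

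The main obstacle I anticipate is the orientation-consistency step: ruling out that $\pi\circ\alpha$ reverses direction at some junction (so that the $m$ passages would partially cancel) is exactly where injectivity of $\pi$ on pairs of incident positive edges (Proposition \ref{lem:pair-injective}) is indispensable, and one must treat vertices of $\gamma_h$ that are not among the distinguished parameters $p_i$. A secondary technicality is the placement of the test point $p$: because $\gamma$ may bound $\Omega$ from either side, one cannot simply take $p$ ``inside $\Omega$'' and must instead select the half of a normal segment landing in the bounded region $D$ and push $p$ beyond the (small, by \eqref{eq:dist-estimate}) range of $\phi$ on $\gamma_h$.
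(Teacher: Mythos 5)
Your proof is correct, but it takes a genuinely different route from the paper's. The paper argues analytically: it defines the normal-offset functions $\Psi_i=\phi\circ(\pi|_{\gamma_h^i})^{-1}\circ\beta$ on the loops $\gamma_h^i=\alpha[p_i,p_{i+1})$, picks an index $k$ for which $\Psi_0-\Psi_k$ changes sign on $[0,1)$ (using the ordering of the values $\phi(w_i)$ at the return points), applies the intermediate value theorem to obtain $\xi$ with $\Psi_0(\xi)=\Psi_k(\xi)$, and then uses the tubular-coordinate formula of Theorem \ref{thm:nbd} to conclude $\gamma_h^0\cap\gamma_h^k\neq\emptyset$, contradicting simplicity of $\gamma_h$. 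You argue topologically: you establish that $\pi\circ\alpha$ traverses $\gamma$ with a single consistent orientation (the key input being Proposition \ref{lem:pair-injective} applied at each vertex, showing the two incident edge-images emanate from $\pi(v)$ in opposite directions along $\gamma$), so $\pi\circ\alpha$ has winding number $\pm m$ about a point $p$ in the bounded component of ${\mathbb R}^2\setminus\gamma$; you then deform $\alpha$ to $\pi\circ\alpha$ by the tubular homotopy $G(x,t)=\pi(x)+(1-t)\phi(x)\hat N(\pi(x))$, choose $p$ on a normal segment with $|\phi(p)|$ exceeding $\max_{K\in{\cal P}_h}h_K$ (so that $G$ never hits $p$), and conclude that the simple closed curve $\gamma_h$ and $\pi\circ\alpha$ share a winding number, forcing $m\le 1$. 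Both proofs exploit the same geometric heart --- Theorem \ref{thm:nbd}'s normal coordinates and the simplicity of $\gamma_h$ --- but the paper's version is more elementary (IVT only, no degree theory), while yours is more conceptual and makes the obstruction to $m>1$ transparent as a winding-number mismatch. One step you state somewhat informally but is essential to make rigorous is the global orientation-consistency of $\pi\circ\alpha$: the cleanest route is to lift $\beta^{-1}\circ\pi\circ\alpha$ to the universal cover $\mathbb R$ of $\gamma\cong S^1$, observe that the lift is strictly locally monotone (by the edge and vertex arguments you give), hence globally strictly monotone on $[0,1]$, so its net increment equals $\pm m$.
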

\begin{proof}
  We prove the proposition by showing that $m>1$ yields a
  contradiction.  Suppose that $m>1$. For each $0\le i<m$, let
  $w_i:=\alpha(p_i), \gamma_h^i := \alpha[p_i,p_{i+1})$ and define
  $\Psi_i\colon [0,1)\rightarrow \mathbb R$ as ${\displaystyle{\Psi_i
      := \phi\circ\left(\pi\big|_{\gamma_h^i}\right)^{-1}\circ
      \beta}}$.
  Note that $\Psi_i$ is well defined  for each $0\le i<m$  because
  $\pi:\gamma_h^i\rightarrow \gamma$ is a bijection from
  Proposition \ref{prop:pv}. Since it follows from Corollary
  \ref{cor:edge-homeo} that $\pi^{-1}:\gamma\rightarrow \gamma_h^i$ is
  continuous, we get that $\Psi_i$ is continuous for each $0\le i<m$.

  For convenience, denote $w_m=v_0=w_0$. By definition of $P_0$,
  $\pi(w_i)=\pi(v_0)$ for each $0\leq i\leq m$. From this and Theorem
  \ref{thm:nbd}, we have
  \begin{align}
    w_i &= \pi(w_i) + \phi(w_i)\,\hat{N}(\pi(w_i)) = \pi(v_0) +
    \phi(w_i)\,\hat{N}(\pi(v_0)). \label{eq:s4}
  \end{align}
  Since $w_i=v_0$ only for $i=0,m$, \eqref{eq:s4} implies that
  $\phi(w_i)\neq \phi(v_0)$ for any $1<i<m$. In particular, since
  $\phi(w_1)\neq \phi(v_0)$, without loss of generality, assume that
  $\phi(w_1)>\phi(v_0)$.  Then since $\phi(w_m)=\phi(v_0)$, there
  exists a smallest index $k$ such that (i) $1\leq k<m$, (ii)
  $\phi(w_k)\geq \phi(w_1)$ and (iii) $\phi(w_{k+1})<\phi(w_1)$.  For
  such a choice of $k$, consider the map
  $(\Psi_0-\Psi_k):[0,1)\rightarrow {\mathbb R}$. From
  $\phi(w_0)=\phi(v_0)$ and $\phi(w_k)\geq \phi(w_1)>\phi(v_0)$, we
  get
  \begin{align}
    (\Psi_0-\Psi_k)(0) = \phi(w_0)-\phi(w_k)<0. \label{eq:s5}
  \end{align}
  On the other hand, from $\phi(w_{k+1})<\phi(w_1)$, we get
  \begin{align}
    (\Psi_0-\Psi_k)(1^-) = \phi(w_1)-\phi(w_{k+1}) > 0. \label{eq:s6}
  \end{align}
  Eqs.\eqref{eq:s5}, \eqref{eq:s6} and the continuity of
  $\Psi_0-\Psi_k$ on $[0,1)$ imply that there exists $\xi\in (0,1)$
  such that $\Psi_0(\xi)=\Psi_k(\xi)$.  For this choice of $\xi$, let
  $x_0\in \gamma_h^0$ and $x_k\in \gamma_h^k$ be such that
  $\pi(x_0)=\pi(x_k)=\beta(\xi)$.  That $x_0$ and $x_k$ exist follows
  again, from Proposition \ref{prop:pv}. Now notice that
  $\Psi_0(\xi)=\Psi_k(\xi)\Rightarrow \phi(x_0)=\phi(x_k)$. Therefore
  from Theorem \ref{thm:nbd}, we have
  \begin{align}
    x_0=\pi(x_0) + \phi(x_0)\,\hat{N}(\pi(x_0)) = \pi(x_k) +
    \phi(x_k)\,\hat{N}(\pi(x_k)) = x_k. \label{eq:s9}
  \end{align}
  Eq.\eqref{eq:s9} shows that $\gamma_h^0\cap \gamma_h^k\neq
  \emptyset$. Since $\gamma_h$ is a simple curve (Lemma
  \ref{lem:simple-closed}) and $k\neq 0$, this is a contradiction.
\end{proof}

\begin{proof}[Proof of Lemma \ref{lem:homeo}]
  Propositions \ref{prop:pv} and \ref{prop:oneloop} together show that
  $\pi:\alpha([0,1))=\gamma_h\rightarrow \gamma$ is a bijection.
  Since $\pi$ is continuous on $\gamma_h$, it follows from Theorem
  \ref{thm:inv-cont} that $\pi:\gamma_h\rightarrow \gamma$ is a
  homeomorphism.
\end{proof}

\section{Connected components of $\Gamma_h$}
\label{sec:connected}
The final step in proving part (iv) of Theorem
\ref{thm:parameterization} is the following lemma.
\begin{lemma}
  Let $\gamma$ be a connected component of $\Gamma$, and  $\gamma_h :=
  \{x\in \Gamma_h: \pi(x)\in \gamma\}$. If $\gamma_h\not=\emptyset$, then
  $\gamma_h$ is a simple, closed curve, and a connected component of
  $\Gamma_h$. 
  \label{lem:connected}
\end{lemma}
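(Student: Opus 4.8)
The plan is to reduce the statement to a counting fact about connected components and then settle that fact by following $\gamma$ as it threads through the background mesh --- realizing the idea announced in the outline, that the positive edges mapping to $\gamma$ form a single connected set. First I would record a dichotomy: for any connected component $c$ of $\Gamma_h$, the set $\pi(c)$ is connected (continuity of $\pi$ on $\Gamma_h$) and is not a single point (Lemma~\ref{lem:edge-injective}, applied to any positive edge contained in $c$), so $\pi(c)$ lies in a unique connected component of $\Gamma$; if that component is $\gamma$, then $\gamma\cap\pi(c)\ne\emptyset$ and Lemma~\ref{lem:homeo} gives that $\pi\colon c\to\gamma$ is a homeomorphism with $\pi(c)=\gamma$. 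Consequently $\gamma_h$ is exactly the union of those connected components $c$ of $\Gamma_h$ with $\pi(c)=\gamma$, a finite disjoint union of Jordan curves (Lemma~\ref{lem:simple-closed}); since $\gamma_h\ne\emptyset$ by hypothesis, the lemma is equivalent to the assertion that exactly one such component exists.

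Next I would pin down which positive edges lie in $\gamma_h$. If $e$ is the positive edge of a positively cut triangle $K=(p,q,r)$ with $\phi(r)<0$, I claim $e\subset\gamma_h$ if and only if $\overline{K}\cap\gamma\ne\emptyset$. For the ``if'' direction, $\overline{K}\subset B(\Gamma,r_n)$, so $\pi(\overline{K})$ is connected; if it meets $\gamma$ then (that being one component of $\Gamma$) $\pi(\overline{K})\subseteq\gamma$, hence $\pi(e)\subseteq\gamma$. For the ``only if'' direction, $e_{pr}\cap\Gamma\ne\emptyset$ by Proposition~\ref{prop:well-def}(i), so $\overline{K}\cap\Gamma\ne\emptyset$; since $\pi$ restricts to the identity on $\Gamma$, every point of $\overline{K}\cap\Gamma$ lies in $\pi(\overline{K})$, which is contained in the single component of $\Gamma$ that also contains $\pi(e)\subseteq\gamma$, so that component is $\gamma$ and $\overline{K}\cap\gamma\ne\emptyset$. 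Thus the positive edges in $\gamma_h$ are precisely the positive edges of the positively cut triangles through which $\gamma$ passes.

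The main step is to show that these positive edges form a single closed chain. Using the mesh size restrictions one has that $\gamma\cap\overline{K}$ is a single arc for every triangle $K$ that $\gamma$ meets, so $\gamma$ traverses a finite cyclic list of triangles $T_1,\dots,T_m$ (with $T_{m+1}:=T_1$), consecutive triangles sharing the edge of $\mathcal{T}_h$ that $\gamma$ crosses between them. The local rule is: when $\gamma$ crosses an edge $e_{uw}$ with $\phi(u)\ge 0>\phi(w)$ into the triangle whose remaining vertex is $z$, then either $\phi(z)\ge 0$ --- in which case that triangle is positively cut with positive edge $e_{uz}$, and $\gamma$ must leave through $e_{wz}$ (in order to separate the $\phi<0$ vertex $w$ from the $\phi\ge 0$ vertices $u,z$) --- or $\phi(z)<0$, in which case the triangle is negatively cut and $\gamma$ leaves through $e_{uz}$. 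In both cases the ``positive endpoint'' of the crossed edge is carried over to the next crossed edge, and it advances (from $u$ to $z$) exactly when the triangle is positively cut. Recording the positively cut triangles that $\gamma$ meets in cyclic order therefore produces positive edges $e_{u_0u_1},e_{u_1u_2},\dots,e_{u_{k-1}u_0}$ in which consecutive edges share a vertex --- a single cycle of positive edges. By Lemmas~\ref{cor:two-vertex} and~\ref{lem:simple-closed} such a cycle is exactly one connected component of $\Gamma_h$, and by the previous paragraph it contains every positive edge of $\gamma_h$; hence $\gamma_h$ equals that single component, which is a simple closed curve.

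I expect this third step to be the main obstacle: justifying rigorously that $\gamma$ determines a well-defined cyclic sequence of triangles joined along shared crossed edges --- in particular in the nonconforming case, where a crossed edge may carry hanging nodes so that one must track the relevant sub-segment together with its endpoints, and in the degenerate case where $\gamma$ passes through a mesh vertex (a vertex at which $\phi=0$) --- and checking that the enter/exit rule above is exhaustive. Once the traversal and the rule are in place, the remainder is routine bookkeeping, which is plausibly part of why the more delicate topological facts of this kind are collected in the appendix.
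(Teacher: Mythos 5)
Your setup is sound and correct: the reduction to showing that exactly one connected component of $\Gamma_h$ maps to $\gamma$ (via Lemma~\ref{lem:homeo}, Lemma~\ref{lem:simple-closed}, and the connectedness of $\pi(c)$) is exactly what the paper does in Proposition~\ref{prop:components}; your characterization of the positive edges in $\gamma_h$ as precisely those of positively cut triangles meeting $\gamma$ is also right. But from that point on your route diverges sharply from the paper's, and the key step --- the ``single cyclic chain'' traversal --- is not proved, and I do not think it can be proved with the tools at hand.

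The traversal rests on the assertion that $\gamma\cap\overline K$ is a single arc for every triangle $K$ meeting $\gamma$, and on the implicit premise that the local entry/exit rule is available in every such triangle. Neither follows from the hypotheses \eqref{eq:mesh-restrictions}. Those conditions constrain only positively cut triangles and, via $\vartheta_K^{\text{adj}}$, the one adjacent triangle across each positive edge intersected by $\Gamma$; they say nothing at all about the shape, diameter, or angles of negatively cut triangles or of triangles $\gamma$ merely brushes past. For such a triangle there is no reason $\phi$ restricted to one of its edges should be monotone, so $\gamma$ may cross the same edge twice and produce a disconnected intersection, and the ``one arc per triangle'' traversal unravels. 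You flag the nonconforming/hanging-node and through-a-vertex degeneracies yourself, but the uncontrolled-triangle issue is the more fundamental gap, because it is not a degenerate edge case --- it is generic under the stated assumptions. There is a second, tell-tale sign of a missing ingredient: your argument makes no use of assumption \eqref{eq:Ch} on $\vartheta_K^{\text{adj}}$, yet the paper's own proof of this lemma hinges on it (via Corollary~\ref{cor:phi-lower} and the inequality \eqref{eq:conn-3}, which is exactly where $\sin\vartheta_{K_-}^{\text{adj}}-2C_{K_-}^h h_{K_-}>0$ enters). A correct proof of the uniqueness of the component must, somewhere, see that the positive edges cannot ``skip'' across $\gamma$ in the way a second chain would require, and \eqref{eq:Ch} is what rules this out quantitatively. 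An argument that never touches it should make you suspicious.

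For contrast, the paper's route is quite different in character: after Proposition~\ref{prop:components}, it orders the candidate components $\gamma_h^1,\ldots,\gamma_h^m$ by signed distance from $\gamma$ (Proposition~\ref{prop:ordering}), applies the Jordan curve theorem to each $\gamma_h^i$ to split the plane into $\Omega_i^\pm$, and shows that a farther component and its positively cut triangles must lie in $\Omega_1^+$; following a normal ray from $\gamma$ through the near component into the adjacent triangle $K_+$ across the positive edge, it uses \eqref{eq:Ch} together with the lower bound on $\phi$ on positive edges to force the strictly-negative vertex of the far component's triangle to in fact satisfy $\phi>0$, a contradiction. So the paper works ``outward'' from $\gamma$ along a single normal and needs nothing about $\gamma$'s trajectory through the mesh beyond what happens in positively cut triangles and their immediate neighbors --- precisely the triangles the assumptions do control. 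Your plan, while appealingly direct, needs control over a strictly larger family of triangles than the hypotheses provide, and that is where it breaks.
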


To prove the lemma, it suffices to show that $\gamma_h$ is a connected
component of $\Gamma_h$, because then Lemma \ref{lem:simple-closed}
would imply that $\gamma_h$ is a simple, closed curve. To this end, we
consider the connected components $\{\gamma_h^i\}_{i=1}^m$ of
$\gamma_h$. Clearly $m<\infty$. The objective is to demonstrate that
$\gamma_h$ has just one connected component, i.e., that $m=1$. We do
so in simple steps. We first show in Proposition \ref{prop:components}
that each component $\gamma_h^i$ is in fact a connected component of
$\Gamma_h$ as well. Next, we order these connected components
according to their signed distance from $\gamma$ (Proposition
\ref{prop:ordering}). Then, we inspect the relative location of
triangles positively cut by each connected component with respect to
the rest. This reveals that $\gamma_h$ has just one connected
component.

\begin{proposition}
  For $i\in\{1,\ldots,m\}$, each connected component $\gamma_h^i$ of
  $\gamma_h$ is a connected component of $\Gamma_h$ as well, and
  consequently 
\begin{align}
  \pi:\gamma_h^i \rightarrow \gamma ~ \text{is a
    homeomorphism}. \label{eq:conn-homeo}
\end{align}
\label{prop:components}
\end{proposition}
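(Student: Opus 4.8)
Proof proposal. The plan is to read the structure of $\Gamma_h$ off Lemma \ref{lem:simple-closed} and match it against the definition $\gamma_h=\{x\in\Gamma_h:\pi(x)\in\gamma\}$. Since ${\cal T}_h$ has only finitely many positive edges, $\Gamma_h$ has finitely many connected components $\Lambda_1,\dots,\Lambda_N$; by Lemma \ref{lem:simple-closed} each $\Lambda_j$ is a simple closed curve built from finitely many closed edges, hence a closed subset of $\Gamma_h$, and therefore, its complement in $\Gamma_h$ being a finite union of such closed sets, also open in $\Gamma_h$. So each $\Lambda_j$ is connected and clopen in $\Gamma_h$.

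Next I would establish the dichotomy that each $\Lambda_j$ is either contained in $\gamma_h$ or disjoint from it. Since $\pi$ is continuous on $\Gamma_h$ (see \S\ref{subsec:assumptions}), $\pi(\Lambda_j)$ is a connected subset of $\Gamma$, and a connected subset of $\Gamma$ is contained in a single connected component of $\Gamma$, the components being the maximal connected subsets. Thus $\pi(\Lambda_j)\subseteq\gamma$ or $\pi(\Lambda_j)\subseteq\Gamma\setminus\gamma$, which by the definition of $\gamma_h$ means $\Lambda_j\subseteq\gamma_h$ or $\Lambda_j\cap\gamma_h=\emptyset$. Consequently $\gamma_h$ is exactly the union of those $\Lambda_j$ with $\pi(\Lambda_j)\subseteq\gamma$. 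Each such $\Lambda_j$ is connected and clopen in $\Gamma_h$, hence clopen in the subspace $\gamma_h$, so it is a connected component of $\gamma_h$; and since these finitely many disjoint pieces cover $\gamma_h$, they are all of its connected components. Therefore each $\gamma_h^i$ coincides with one of the $\Lambda_j$, i.e., with a connected component of $\Gamma_h$, which is the first assertion.

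For the consequence, fix $i\in\{1,\dots,m\}$. We have just shown $\gamma_h^i$ is a connected component of $\Gamma_h$ with $\pi(\gamma_h^i)\subseteq\gamma$; since $\gamma_h^i\neq\emptyset$ we also get $\gamma\cap\pi(\gamma_h^i)\neq\emptyset$. Hence the hypotheses of Lemma \ref{lem:homeo} are met with $\gamma_h^i$ playing the role of $\gamma_h$ there, and that lemma yields \eqref{eq:conn-homeo}. I do not expect a genuine obstacle here, as the argument is essentially point-set bookkeeping; the one step that must be handled with care is forcing the connected components of the \emph{subspace} $\gamma_h$ to be full components of $\Gamma_h$ rather than proper subsets of them, and this is precisely where the connectedness of $\pi(\Lambda_j)$ together with finiteness of the number of positive edges is used. (Finiteness of the number of connected components of $\Gamma$, it being a compact $1$-manifold, is also available and would let one phrase the middle step as: each component of $\Gamma$ is clopen in $\Gamma$.)
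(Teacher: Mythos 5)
Your proposal is correct, and its overall structure matches the paper's proof: both decompose $\Gamma_h$ into its (finitely many) connected components, show that $\gamma_h$ is a union of some of these, conclude that the components of $\gamma_h$ are full components of $\Gamma_h$, and then invoke Lemma \ref{lem:homeo} for the homeomorphism. The one place you genuinely diverge is in how the dichotomy (each component $\Lambda_j$ of $\Gamma_h$ is either contained in $\gamma_h$ or disjoint from it) is obtained. The paper takes a component $\Gamma_h^j$ that meets $\gamma_h$, concludes $\pi(\Gamma_h^j)\cap\gamma\neq\emptyset$, and then appeals to Lemma \ref{lem:homeo} to deduce $\pi(\Gamma_h^j)=\gamma$ and hence $\Gamma_h^j\subseteq\gamma_h$. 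You instead get the dichotomy directly from continuity of $\pi$ and connectedness of $\pi(\Lambda_j)$: a connected subset of $\Gamma$ lies in exactly one component of $\Gamma$, so $\pi(\Lambda_j)\subseteq\gamma$ or $\pi(\Lambda_j)\cap\gamma=\emptyset$. This is a mild but real simplification — it uses only point-set topology and defers Lemma \ref{lem:homeo} to the final step where it is actually needed, rather than invoking the full homeomorphism just to obtain an inclusion. Both proofs still depend on Lemma \ref{lem:homeo} for \eqref{eq:conn-homeo}, and both use the finiteness of the number of positive edges to conclude that components of $\Gamma_h$ are clopen, so the logical dependencies are the same; yours is simply a tidier route through the middle.
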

\begin{proof}
  Clearly $\Gamma_h$ has only finitely many connected components, say
  $\{\Gamma_h^i\}_{i=1}^k$ for some $k<\infty$. We prove the
  proposition by demonstrating that for $i\in \{1,\ldots, m\}$ and
  $j\in \{1,\ldots, k\}$, $\gamma_h^i\cap \Gamma_h^j\neq
  \emptyset \Rightarrow \gamma_h^i=\Gamma_h^j$. 
  
  Suppose $\gamma_h^i\cap \Gamma_h^j\neq \emptyset$. Then
  $\pi(\gamma_h^i)\subseteq \pi(\gamma_h)\subseteq\gamma \Rightarrow
  \pi(\Gamma_h^j)\cap \gamma\neq \emptyset$. Using Lemma
  \ref{lem:homeo}, we get that $\pi:\Gamma_h^j\rightarrow \gamma$ is a
  homeomorphism, and in particular, $\pi(\Gamma_h^j)=\gamma$. By
  definition of $\gamma_h$, we get $\Gamma_h^j\subseteq
  \gamma_h$. Since $\gamma_h\subseteq \Gamma_h$, $\Gamma_h^j$ is a
  connected component of $\Gamma_h$ and $\Gamma_h^j\subseteq
  \gamma_h$, we conclude that $\Gamma_h^j$ is a connected component of
  $\gamma_h$ as well.  The assumption $\gamma_h^i\cap \Gamma_h^j\neq
  \emptyset$ implies that $\Gamma_h^j$ in fact equals
  $\gamma_h^i$. Eq. \eqref{eq:conn-homeo} follows immediately from
  Lemma \ref{lem:homeo}.
\end{proof}

Next, we order the connected components $\{\gamma_h^i\}_{i=1}^m$ of
$\gamma_h$ according to their signed distance from $\gamma$.  The
natural functions to consider for such an ordering are the maps
$\Psi_i=\phi\circ\left(\pi\big|_{\gamma_h^i}\right)^{-1}$, $1\leq i
\leq m$.

\begin{proposition}
  Let $1\le i,j\le m$. Then,
  \begin{romannum}
  \item The function $\Psi_i$ is well defined, continuous and for
    $K\in {\cal P}_h$ with positive edge $e\subset \gamma_h^i$, 
    \begin{align*}
      -h_K<\Psi_i(\pi(e))\leq h_K.
    \end{align*}
  \item For any $\xi\in \gamma$, $\Psi_i(\xi) = \Psi_j(\xi) ~ \iff ~ i=j$.
  \item If $\Psi_i(\xi)<\Psi_j(\xi)$ for some $\xi\in \gamma$, then
    $\Psi_i < \Psi_j$ on $\gamma$.
  \end{romannum}
  \label{prop:ordering}
\end{proposition}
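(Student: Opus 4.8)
The plan is to establish (i)--(iii) in turn; essentially all the substantive work has already been done in Proposition \ref{prop:components}, which tells us that $\pi\colon\gamma_h^i\to\gamma$ is a homeomorphism for each $i$.

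For (i) I would first note that $\big(\pi|_{\gamma_h^i}\big)^{-1}\colon\gamma\to\gamma_h^i$ is well defined and continuous exactly because $\pi|_{\gamma_h^i}$ is a homeomorphism, and that $\phi$ is continuous on $\gamma_h^i\subset\Gamma_h\subset B(\Gamma,r_n)$ since $\phi$ is $C^2$ there by Theorem \ref{thm:nbd}; hence $\Psi_i=\phi\circ\big(\pi|_{\gamma_h^i}\big)^{-1}$ is continuous. For the bound I would use injectivity of $\pi|_{\gamma_h^i}$ to get $\big(\pi|_{\gamma_h^i}\big)^{-1}(\pi(x))=x$ for every $x$ on a positive edge $e\subset\gamma_h^i$, so that $\Psi_i(\pi(e))=\phi(e)$; then $\phi\le h_K$ on $e$ is immediate from Proposition \ref{prop:well-def}(ii), while the strict lower bound $\phi>-h_K$ follows from the distance estimate \eqref{eq:dist-estimate} (part (iii) of the main theorem) together with the fact that \eqref{eq:sigmaCh} and $\sigma_K\ge\sqrt{3}$ force $C_K^hh_K<1$, hence $C_K^hh_K^2<h_K$.

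For (ii) the forward implication is trivial, and for the converse I would argue by contradiction: if $\Psi_i(\xi)=\Psi_j(\xi)$ with $i\ne j$, put $x_i=\big(\pi|_{\gamma_h^i}\big)^{-1}(\xi)\in\gamma_h^i$ and $x_j=\big(\pi|_{\gamma_h^j}\big)^{-1}(\xi)\in\gamma_h^j$. Then $\pi(x_i)=\pi(x_j)=\xi$ and $\phi(x_i)=\phi(x_j)$, so the normal-coordinate representation $x=\pi(x)+\phi(x)\hat{N}(\pi(x))$ from Theorem \ref{thm:nbd} gives $x_i=\xi+\phi(x_i)\hat{N}(\xi)=x_j$, whence $\gamma_h^i\cap\gamma_h^j\ne\emptyset$ --- impossible, since distinct connected components of $\gamma_h$ are disjoint. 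This is the same device used to obtain \eqref{eq:s9}.

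For (iii), if $\Psi_i(\xi)<\Psi_j(\xi)$ for some $\xi$ then $i\ne j$, so by (ii) the continuous function $\Psi_i-\Psi_j$ vanishes nowhere on $\gamma$; as $\gamma$ is connected its image is an interval missing $0$, hence of constant sign, and since it is negative at $\xi$ it is negative on all of $\gamma$, i.e., $\Psi_i<\Psi_j$ there (the case $i=j$ being vacuous). The only step needing any care is the strict lower bound in (i), where one must reduce $\Psi_i$ on a positive edge to $\phi$ itself and invoke the sharp estimate \eqref{eq:dist-estimate} in place of the crude $|\phi|\le h_K$; parts (ii) and (iii) are then purely topological, relying on the disjointness of the $\gamma_h^i$ and the connectedness of $\gamma$.
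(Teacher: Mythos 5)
Your proof is correct and follows essentially the same route as the paper: well-definedness and continuity of $\Psi_i$ from the homeomorphism $\pi|_{\gamma_h^i}$ together with continuity of $\phi$; the normal-coordinate identity $x_i=\xi+\Psi_i(\xi)\hat N(\xi)$ combined with disjointness of distinct connected components for (ii); and a continuity-plus-connectedness argument (equivalent to the paper's intermediate-value argument) for (iii). The only cosmetic difference is that for the strict lower bound in (i) the paper cites the sharper auxiliary estimate \eqref{eq:phi-lower-hk} directly, whereas you reconstruct the same inequality $C_K^hh_K^2<h_K$ from \eqref{eq:sigmaCh} and $\sigma_K\geq\sqrt 3$; this is sound and causes no circularity since part (iii) of the theorem is established independently of part (iv).
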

\textit{Proof.}
\begin{romannum}
\item The fact that $\Psi_i$ is well-defined and continuous is a
  consequence of \eqref{eq:conn-homeo} and the continuity of
  $\phi$. Given positive edge $e\subset \gamma_h^i$ of $K\in
  {\cal P}_h$, part (ii) of Proposition \ref{prop:well-def} shows that
  $|\Psi_i(\pi(e)|\leq h_K$. That $\Psi_i(\pi(e))>-h_K$
  follows from \eqref{eq:phi-lower-hk}.
\item Let $\xi\in\gamma$ be arbitrary. Following
  \eqref{eq:conn-homeo}, let $x_i\in\gamma_h^i$ be such that
  $\pi(x_i)=\xi$, where $1\leq i \leq m$. From $\phi(x_i)=\Psi_i(\xi)$
  and Theorem \ref{thm:nbd}, we get
  \begin{align}
    x_i &= \pi(x_i) +\phi(x_i)\,\hat{N}(\pi(x_i)) = \xi + \Psi_i(\xi)\,\hat{N}(\xi). \label{eq:u1}
  \end{align}
  Since $\gamma_h^i\cap \gamma_h^j=\emptyset$ for $i\neq j$, $x_i=x_j\iff i=j$. Hence \eqref{eq:u1} implies that $\Psi_i(\xi)=\Psi_j(\xi)\iff i=j$. 
\item For some $i\neq j$ and $\xi\in \gamma$, assume that
  $\Psi_i(\xi)<\Psi_j(\xi)$.  Suppose there exists $\zeta\in \gamma$
  such that $\Psi_i(\zeta) \not< \Psi_j(\zeta)$. Since part (ii) shows
  $\Psi_i(\zeta)\neq \Psi_j(\zeta)$, we have $\Psi_i(\zeta)>
  \Psi_j(\zeta)$.  Note that $(\Psi_i-\Psi_j)$ is a continuous map on
  the connected set $\gamma$.  Therefore, from
  $(\Psi_i-\Psi_j)(\xi)<0$, $(\Psi_i-\Psi_j)(\zeta)>0$ and the
  intermediate value theorem, we know there exists $\zeta'\in \gamma$
  such that $(\Psi_i-\Psi_j)(\zeta')=0$. This contradicts part
  (ii). \qquad \endproof
\end{romannum}

The above proposition shows that we can find the connected component
$i^\sharp $ of $\gamma_h$ that is closest to $\gamma$ by simply
inspecting the values of $\Psi_j(\xi)$ for $1\leq j\leq m$ at any
$\xi\in\gamma$. Then, $\Psi_{i^\sharp}<\Psi_j$ on $\gamma$ for each
$j$ different from $i^\sharp$.

As noted previously, each set $\gamma_h^i$ is a Jordan curve. Hence
${\mathbb R}^2\setminus\gamma_h^i$ has precisely two connected
components, namely $\Omega_i^-$ and $\Omega_i^+$. The purpose of such
a decomposition of ${\mathbb R}^2$ is to examine the relative location
of the connected components of $\gamma_h$ and Proposition
\ref{prop:omegapm} shows how to pick them. To this end, we introduce
the curve $\omega$ defined as
\begin{subequations}
  \begin{align}
    \omega &=
    \{\xi-r_\omega(\xi)\,\hat{N}(\xi)\,:\,\xi\in\gamma\}, \label{eq:v0}\\
    \text{where}\quad
    r_\omega &= \frac{1}{2}\left(r_n-\Psi_{i^\sharp}\right). \label{eq:v0-b}
  \end{align}  
\end{subequations}
We will compare the distances of each connected component $\gamma_h^i$
of $\gamma_h$ from $\gamma$ to establish their relative locations. The
curve $\omega$ introduced above is useful in these calculations.

\begin{proposition}
  For $\xi\in \gamma$ and $1\leq i\leq m$, let $K\in {\cal P}_h$ be
  such that $\left(\pi\big|_{\gamma_h^i}\right)^{-1}(\xi)$ belongs to
  the positive edge of $K$. Then
  \begin{align}
    -r_n <\phi(\xi-r_{\omega}(\xi)\,\hat{N}(\xi))=-r_\omega(\xi)<\Psi_i(\xi).
    \label{eq:omega-main}
  \end{align}
  \label{prop:omega}
\end{proposition}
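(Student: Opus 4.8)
The plan is to deduce \eqref{eq:omega-main} from Theorem \ref{thm:nbd} and the ordering of the maps $\{\Psi_j\}$ supplied by Proposition \ref{prop:ordering}; no new geometry is needed. The middle equality is simply the fact that along a normal ray the signed distance function reproduces the ray parameter — part of the $C^1$-diffeomorphism statement of Theorem \ref{thm:nbd} — and it applies as soon as the point $\xi-r_\omega(\xi)\hat N(\xi)$ sits inside the tube $B(\Gamma,r_n)$, i.e. as soon as $0<r_\omega(\xi)<r_n$. So the substantive step is to control the range of $r_\omega(\xi)$, and once that is done the two inequalities are elementary arithmetic.

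First I would fix $\xi\in\gamma$ and choose $K^\sharp\in{\cal P}_h$ whose positive edge contains $\left(\pi\big|_{\gamma_h^{i^\sharp}}\right)^{-1}(\xi)$. Proposition \ref{prop:ordering}(i) then gives $-h_{K^\sharp}<\Psi_{i^\sharp}(\xi)\le h_{K^\sharp}$, and $h_{K^\sharp}<r_n$ by \eqref{eq:h-global}, so $\Psi_{i^\sharp}(\xi)\in(-r_n,r_n)$. Since $r_\omega=\left(r_n-\Psi_{i^\sharp}\right)/2$ by \eqref{eq:v0-b}, it follows at once that $r_\omega(\xi)>0$ (because $\Psi_{i^\sharp}(\xi)<r_n$) and that $r_\omega(\xi)<r_n$ (because $\Psi_{i^\sharp}(\xi)>-r_n$). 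Hence $-r_\omega(\xi)\in(-r_n,r_n)$, and feeding the pair $(-r_\omega(\xi),\xi)$ into the inverse diffeomorphism of Theorem \ref{thm:nbd} yields $\phi\!\left(\xi-r_\omega(\xi)\hat N(\xi)\right)=-r_\omega(\xi)$, which is the middle equality; the leftmost inequality $-r_n<-r_\omega(\xi)$ is just the bound $r_\omega(\xi)<r_n$ already obtained.

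For the rightmost inequality I would write $a:=\Psi_{i^\sharp}(\xi)$ and $b:=\Psi_i(\xi)$. By the definition of $i^\sharp$ together with Proposition \ref{prop:ordering}(iii), $\Psi_{i^\sharp}\le\Psi_i$ on $\gamma$, so $a/2\le b/2$; and applying Proposition \ref{prop:ordering}(i) to the triangle $K$ of the statement gives $b>-h_K>-r_n$, hence $-r_n/2<b/2$. Adding these two estimates gives $-r_\omega(\xi)=a/2-r_n/2<b=\Psi_i(\xi)$, which is exactly the claim. I do not expect a genuine obstacle; the one place that needs attention is the bookkeeping guaranteeing that $\xi-r_\omega(\xi)\hat N(\xi)$ stays inside $B(\Gamma,r_n)$, which is precisely why the statement is phrased relative to a cutting triangle $K$ and why the lower bound in Proposition \ref{prop:ordering}(i) is invoked.
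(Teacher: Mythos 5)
Your proof is correct and follows essentially the same route as the paper: use Proposition \ref{prop:ordering}(i) with \eqref{eq:h-global} to bound $\Psi_{i^\sharp}(\xi)$ and $\Psi_i(\xi)$ in $(-r_n,r_n)$, apply Theorem \ref{thm:nbd} to get the middle equality, and then combine $\Psi_{i^\sharp}\le\Psi_i$ with $\Psi_i(\xi)>-r_n$ for the rightmost inequality. If anything, you are a bit more scrupulous than the paper in explicitly introducing the triangle $K^\sharp$ to bound $\Psi_{i^\sharp}(\xi)$ before concluding $0<r_\omega(\xi)<r_n$, a step the paper elides by citing the bound on $\Psi_i$ and letting $r_\omega$'s dependence on $\Psi_{i^\sharp}$ pass implicitly; your algebra for the final inequality is a harmless rearrangement of the same estimate.
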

\textit{Proof.}  Following \eqref{eq:conn-homeo}, we know that there
is a unique point $x_i\in\gamma_h^i$ such that
$\pi(x_i)=\xi$. Therefore, we can find $K\in {\cal P}_h$ such that
$x_i$ belongs to the positive edge of $K$. From part(i) of Proposition
\ref{prop:ordering} and \eqref{eq:h-global}, we get that
$|\Psi_i(\xi)|\leq h_K<r_n$. The definition of $r_\omega$ then implies
$0<r_\omega(\xi)<r_n$. Hence Theorem \ref{thm:nbd} shows
$\phi(\xi-r_\omega(\xi)\,\hat{N}(\xi))=-r_\omega(\xi)$. The lower
bound in \eqref{eq:omega-main} follows.

  Next, from Proposition \ref{prop:ordering} and \eqref{eq:h-global},
  we have
  \begin{align}
    \Psi_j(\xi)>-h_K>-r_n \quad \text{for}~1\leq j\leq m. \label{eq:om-a}   
  \end{align}
  Using \eqref{eq:om-a} and the definition of $i^\sharp$, we get the
  upper bound in \eqref{eq:omega-main}:
  \begin{align*}
    r_\omega(\xi)+\Psi_i(\xi) \geq r_\omega(\xi)+\Psi_{i^\sharp}(\xi)
    =\frac{1}{2}\left(r_n+\Psi_{i^\sharp}(\xi)\right)>0~\Rightarrow~-r_\omega(\xi)<\Psi_i(\xi). \qquad \endproof
  \end{align*}


\begin{proposition}
  For each $1\leq i\leq m$, ${\mathbb R}^2\setminus\gamma_h^i$ has
  precisely two connected components $\Omega_i^-$ and $\Omega_i^+$,
  such that the non-empty set $\omega$ is contained in $\Omega_i^-$.
  \label{prop:omegapm}
\end{proposition}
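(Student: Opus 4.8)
The plan is to combine the Jordan curve theorem with the \emph{strict} inequality of Proposition \ref{prop:omega}. By Proposition \ref{prop:components}, $\gamma_h^i$ is a connected component of $\Gamma_h$, so Lemma \ref{lem:simple-closed} shows it is a simple closed curve; the Jordan curve theorem then yields that ${\mathbb R}^2\setminus\gamma_h^i$ has precisely two connected components. It remains only to exhibit $\omega$ as a non-empty connected set disjoint from $\gamma_h^i$, since a connected set disjoint from a Jordan curve lies in exactly one of the curve's two complementary components.

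First I would record that $\omega$ is non-empty and connected. Non-emptiness is immediate since $\gamma\neq\emptyset$ and $\xi-r_\omega(\xi)\hat N(\xi)$ is defined for every $\xi\in\gamma$. For connectedness, $r_\omega=\tfrac12(r_n-\Psi_{i^\sharp})$ is continuous on $\gamma$ by part (i) of Proposition \ref{prop:ordering}, and $\xi\mapsto\hat N(\xi)$ is continuous since $\Gamma$ is $C^2$-regular; hence $\xi\mapsto\xi-r_\omega(\xi)\hat N(\xi)$ is a continuous map of the connected set $\gamma$, and its image $\omega$ is connected.

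The crux is showing $\omega\cap\gamma_h^i=\emptyset$. Suppose, for contradiction, that $z=\xi-r_\omega(\xi)\hat N(\xi)\in\gamma_h^i$ for some $\xi\in\gamma$. Applying Proposition \ref{prop:omega} with $K\in{\cal P}_h$ the positively cut triangle whose positive edge contains $\left(\pi\big|_{\gamma_h^i}\right)^{-1}(\xi)$ (such a $K$ exists and is unique by Lemma \ref{lem:uniq-id}), we have $0<r_\omega(\xi)<r_n$, so $z\in B(\Gamma,r_n)$ and Theorem \ref{thm:nbd} gives $\pi(z)=\xi$ and $\phi(z)=-r_\omega(\xi)$. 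But $\pi:\gamma_h^i\rightarrow\gamma$ is a homeomorphism by \eqref{eq:conn-homeo}, so $z$ is the unique point of $\gamma_h^i$ projecting to $\xi$, i.e. $z=\left(\pi\big|_{\gamma_h^i}\right)^{-1}(\xi)$, whence $\phi(z)=\Psi_i(\xi)$. Thus $\Psi_i(\xi)=-r_\omega(\xi)$, contradicting the strict inequality $-r_\omega(\xi)<\Psi_i(\xi)$ in \eqref{eq:omega-main}.

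Finally, $\omega$ being connected and disjoint from the Jordan curve $\gamma_h^i$, it lies entirely in one of the two components of ${\mathbb R}^2\setminus\gamma_h^i$; I would label that component $\Omega_i^-$ and the other $\Omega_i^+$. I do not anticipate a genuine obstacle: every ingredient (the Jordan curve theorem, the homeomorphism \eqref{eq:conn-homeo}, the tubular-neighborhood identities of Theorem \ref{thm:nbd}, and the bound of Proposition \ref{prop:omega}) is already available, and the only point demanding care is that the inequality in Proposition \ref{prop:omega} is strict, which is precisely what forbids $\omega$ from meeting $\gamma_h^i$.
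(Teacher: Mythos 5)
Your proof is correct and follows essentially the same route as the paper: Jordan curve theorem for the two-component decomposition of ${\mathbb R}^2\setminus\gamma_h^i$, continuity of $\xi\mapsto\xi-r_\omega(\xi)\hat N(\xi)$ for connectedness of $\omega$, and the strict inequality $-r_\omega<\Psi_i$ from Proposition \ref{prop:omega} to conclude $\omega\cap\gamma_h^i=\emptyset$. You merely spell out the disjointness step (via the tubular-neighborhood identity of Theorem \ref{thm:nbd} and the homeomorphism \eqref{eq:conn-homeo}) in more detail than the paper, which leaves that step implicit.
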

\begin{proof}
  Firstly, note that $\omega$ is the image of $\gamma$ under a
  continuous map. Therefore, the assumption that $\gamma$ is connected
  implies that $\omega$ is a connected set.  Each connected component
  $\gamma_h^i$ is a simple, closed curve (Proposition
  \ref{prop:components} and Lemma \ref{lem:simple-closed}).  Therefore
  by the Jordan curve theorem, ${\mathbb R}^2\setminus \gamma_h^i$ has
  precisely two connected components. From Proposition
  \ref{prop:omega}, we know that $-r_\omega<\Psi_i$ on $\gamma$. Using
  this in the definition of $\omega$ implies that $\omega\cap
  \gamma_h^i=\emptyset$. Hence the connected set $\omega$ is contained
  in one of the two connected components of ${\mathbb R}^2\setminus
  \gamma_h^i$. The proposition follows from setting $\Omega_i^-$ to be
  the component of ${\mathbb R}^2\setminus\gamma_h^i$ that contains
  $\omega$ and $\Omega_i^+$ to be the other.
\end{proof}

\begin{proposition}
  For $\xi\in\gamma$ and $i\in \{1,\ldots,m\}$
  \begin{align}
    \emptyset \neq \ell_i^- := \left\{\xi+\lambda\,\hat{N}(\xi)\,:\,
      -r_n < \lambda < \Psi_i(\xi)\right\} \subset \Omega_i^-.
    \label{eq:lminus-main}
  \end{align}
  \label{prop:lminus}
\end{proposition}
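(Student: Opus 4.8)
The plan is to establish the two halves of \eqref{eq:lminus-main} separately: that $\ell_i^-$ is nonempty, and that it sits inside $\Omega_i^-$.

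Nonemptiness is immediate from Proposition \ref{prop:omega}, which gives $-r_n < -r_\omega(\xi) < \Psi_i(\xi)$; thus $(-r_n,\Psi_i(\xi))$ is a nonempty interval, and in particular $\lambda = -r_\omega(\xi)$ belongs to it, so $\xi - r_\omega(\xi)\hat{N}(\xi) \in \ell_i^-$. By the definition \eqref{eq:v0} of $\omega$, this same point lies on $\omega$ --- a fact I will reuse.

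The heart of the argument is to show $\ell_i^- \cap \gamma_h^i = \emptyset$. Arguing by contradiction, suppose $z = \xi + \lambda\hat{N}(\xi) \in \gamma_h^i$ with $-r_n < \lambda < \Psi_i(\xi)$. Pick $K \in {\cal P}_h$ whose positive edge $e \subset \gamma_h^i$ contains $\left(\pi\big|_{\gamma_h^i}\right)^{-1}(\xi)$; part (i) of Proposition \ref{prop:ordering} with \eqref{eq:h-global} yields $\Psi_i(\xi) \leq h_K < r_n$, so $|\lambda| < r_n$ and $z \in B(\Gamma,r_n)$. Theorem \ref{thm:nbd} then forces $\pi(z) = \xi$ and $\phi(z) = \lambda$, since $z$ is the image of $(\lambda,\xi)$ under the inverse diffeomorphism there. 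But $z \in \gamma_h^i$ together with $\pi(z) = \xi$ and the injectivity of $\pi$ on $\gamma_h^i$ (Proposition \ref{prop:components}, \eqref{eq:conn-homeo}) force $z = \left(\pi\big|_{\gamma_h^i}\right)^{-1}(\xi)$, hence $\phi(z) = \Psi_i(\xi)$, contradicting $\phi(z) = \lambda < \Psi_i(\xi)$.

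To finish, note $\ell_i^-$ is the continuous image of the interval $(-r_n,\Psi_i(\xi))$, hence connected, and it is disjoint from the Jordan curve $\gamma_h^i$, so by Proposition \ref{prop:omegapm} it lies entirely in one of the two components $\Omega_i^-$, $\Omega_i^+$ of ${\mathbb R}^2 \setminus \gamma_h^i$. Since $\xi - r_\omega(\xi)\hat{N}(\xi) \in \ell_i^- \cap \omega$ and $\omega \subset \Omega_i^-$, that component must be $\Omega_i^-$. I expect the disjointness step $\ell_i^- \cap \gamma_h^i = \emptyset$ to be the main obstacle: it is where one must simultaneously use the normal-coordinate representation of Theorem \ref{thm:nbd} (to read off $\phi$ and $\pi$ along the segment), the injectivity of $\pi$ over $\gamma_h^i$, and the bound $\Psi_i(\xi) < r_n$ that keeps the whole segment inside the tube $B(\Gamma,r_n)$ where that representation is valid.
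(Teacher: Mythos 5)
Your proof is correct and follows the same strategy as the paper's: establish $\ell_i^-\cap\gamma_h^i=\emptyset$ via the normal-coordinate representation from Theorem \ref{thm:nbd} and injectivity of $\pi$ on $\gamma_h^i$, then use connectedness of $\ell_i^-$ together with the fact that $\xi - r_\omega(\xi)\hat{N}(\xi)\in\ell_i^-\cap\omega\subset\Omega_i^-$ to conclude. You spell out the disjointness step more explicitly than the paper (which simply observes $x_i=\xi+\Psi_i(\xi)\hat{N}(\xi)\notin\ell_i^-$ and then asserts $\ell_i^-\cap\gamma_h^i=\emptyset$), but the underlying argument is identical.
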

\begin{proof}
  Following \eqref{eq:conn-homeo}, let $x_i\in\gamma_h^i$ be such that
  $\pi(x_i)=\xi$. From Theorem \ref{thm:nbd} and
  $\phi(x_i)=\Psi_i(\xi)$, we have
  \begin{align}
    x_i &= \xi + \phi(x_i)\hat{N}(\xi) = \xi+\Psi_i(\xi)\hat{N}(\xi). \label{eq:ww1}
  \end{align}
  Eq.\eqref{eq:ww1} demonstrates that $x_i\notin \ell_i^-$ and hence
  that $\ell_i^-\cap \gamma_h^i=\emptyset$. Then, noting that
  $\ell_i^-$ is a connected set, either $\ell_i^-\subset \Omega_i^-$
  or $\ell_i^-\subset \Omega_i^+$. Therefore, we prove
  $\ell_i^-\subset\Omega_i^-$ by showing that $\ell_i^-\cap
  \Omega_i^-\neq \emptyset$. To this end, consider the point $y =
  \xi-r_\omega(\xi)\hat{N}(\xi)$. While $y\in \omega$ by definition,
  $-r_n<-r_\omega(\xi)<\Psi_i(\xi)$ from Proposition \ref{prop:omega}
  shows that $y\in \ell_i^-$ (and hence $\ell_i^-\neq
  \emptyset$). Recalling that $\omega\subset \Omega_i^-$ from
  Proposition \ref{prop:omegapm} we get $y\in \ell_i\cap \omega\subset
  \ell_i\cap \Omega_i^-\Rightarrow \ell_i^-\cap \Omega_i^-\neq
  \emptyset$.
\end{proof}

\begin{proposition}
  For $\xi\in \gamma$ and $i\in\{1,\ldots,m\}$, 
  \begin{align}
    \emptyset\neq \ell_i^+ := \left\{\xi+\lambda\,\hat{N}(\xi)\,:\,
      \Psi_i(\xi)<\lambda <r_n \right\} \subset
      \Omega_i^+. \label{eq:lplus-main}
  \end{align}
  \label{prop:lplus}
\end{proposition}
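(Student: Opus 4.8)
The plan is to prove Proposition~\ref{prop:lplus} by mimicking the proof of Proposition~\ref{prop:lminus}, with one essential change: the step ``exhibit an explicit point of $\Omega_i^-$ lying in $\ell_i^-$'' has no obvious analogue for $\Omega_i^+$, so I would replace it by a local transversality argument at the point of $\gamma_h^i$ lying over $\xi$.

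First I would pin down that point. Set $x_i:=\bigl(\pi\big|_{\gamma_h^i}\bigr)^{-1}(\xi)$; by~\eqref{eq:conn-homeo} it is well defined, $\pi(x_i)=\xi$ and $\phi(x_i)=\Psi_i(\xi)$, and by part~(i) of Proposition~\ref{prop:ordering} together with~\eqref{eq:h-global} we have $-r_n<\Psi_i(\xi)\le h_K<r_n$. Theorem~\ref{thm:nbd} then gives $x_i=\xi+\Psi_i(\xi)\hat N(\xi)$, and $\Psi_i(\xi)<r_n$ shows $\ell_i^+\neq\emptyset$. For any $\lambda\in(\Psi_i(\xi),r_n)$ one has $|\lambda|<r_n$, so $\xi+\lambda\hat N(\xi)\in B(\Gamma,r_n)$ and, by the inverse formula in Theorem~\ref{thm:nbd}, $\phi(\xi+\lambda\hat N(\xi))=\lambda\neq\Psi_i(\xi)$ while $\pi(\xi+\lambda\hat N(\xi))=\xi$; since $x_i$ is the unique point of $\gamma_h^i$ over $\xi$, this forces $\xi+\lambda\hat N(\xi)\notin\gamma_h^i$, i.e.\ $\ell_i^+\cap\gamma_h^i=\emptyset$. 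Because $\gamma_h^i$ is a simple closed curve (Proposition~\ref{prop:components} and Lemma~\ref{lem:simple-closed}), the Jordan curve theorem and the connectedness of $\ell_i^+$ then place $\ell_i^+$ entirely inside $\Omega_i^-$ or entirely inside $\Omega_i^+$, exactly as in Proposition~\ref{prop:lminus}.

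The remaining, and main, step is to exclude $\ell_i^+\subset\Omega_i^-$. Here I would work in a ball $B(x_i,\varepsilon)$ so small that $B(x_i,\varepsilon)\cap\gamma_h^i$ is an arc through $x_i$: either a sub-segment of the single positive edge containing $x_i$ in its relative interior, or the union of the two positive edges of $\gamma_h^i$ meeting at $x_i$ when $x_i$ is a vertex of $\gamma_h^i$ (there are exactly two such edges, by Lemma~\ref{cor:two-vertex}). Then $B(x_i,\varepsilon)\setminus\gamma_h^i$ has precisely two connected components, each contained in one of $\Omega_i^\pm$, and since $x_i\in\gamma_h^i=\partial\Omega_i^-=\partial\Omega_i^+$ the two components lie in different ones. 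Proposition~\ref{prop:angle-normal}, applied at $x_i$ with $\pi(x_i)=\xi$, says that no positive edge incident to $x_i$ is parallel to $\hat N(\xi)$; and in the vertex case Lemma~\ref{lem:simple} says the two incident positive edges point into opposite open half-planes bounded by the normal line $\{\xi+\lambda\hat N(\xi):\lambda\in\mathbb R\}$, which passes through $x_i$. A short computation of the type carried out in~\S\ref{sec:pi-injective} (cf.~\eqref{eq:q12}) then shows that, for $\varepsilon$ small, the opposite rays $\{x_i+t\hat N(\xi):0<t<\varepsilon\}$ and $\{x_i-t\hat N(\xi):0<t<\varepsilon\}$ lie in different components of $B(x_i,\varepsilon)\setminus\gamma_h^i$. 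The second ray equals $\{\xi+(\Psi_i(\xi)-t)\hat N(\xi):0<t<\varepsilon\}$, and for $\varepsilon\le\Psi_i(\xi)+r_n$ it lies in $\ell_i^-$, hence in $\Omega_i^-$ by Proposition~\ref{prop:lminus}; therefore the first ray lies in $\Omega_i^+$, and for $\varepsilon\le r_n-\Psi_i(\xi)$ it is contained in $\ell_i^+$, so $\ell_i^+\cap\Omega_i^+\neq\emptyset$ and thus $\ell_i^+\subset\Omega_i^+$.

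The delicate point, and the one I expect to need the most care in a full write-up, is the local separation claim of the previous paragraph: verifying cleanly, in both the interior-of-an-edge case and the vertex case, that the two normal rays issuing from $x_i$ fall into the two \emph{different} local components of $B(x_i,\varepsilon)\setminus\gamma_h^i$ — that is, translating the algebraic facts ``the incident edges are not parallel to $\hat N(\xi)$'' and ``the incident edges lie in opposite half-planes'' into the topological statement about connected components near $x_i$. Everything else (well-definedness, disjointness from $\gamma_h^i$, nonemptiness, and the final connectedness deduction) is a direct transcription of the corresponding parts of the proof of Proposition~\ref{prop:lminus}.
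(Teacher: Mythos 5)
Your proposal is correct and takes essentially the same route as the paper: show $\ell_i^+$ is nonempty and disjoint from $\gamma_h^i$, reduce by connectedness to exhibiting one point of $\ell_i^+$ in $\Omega_i^+$, and do so by a local ball argument at $x_i$ that places the downward normal ray in $\Omega_i^-$ via Proposition~\ref{prop:lminus} and hence the upward ray in $\Omega_i^+$, splitting into the interior-of-an-edge and vertex cases and invoking Proposition~\ref{prop:angle-normal} and Lemma~\ref{lem:simple} exactly as the paper does. The ``delicate point'' you flag — that the two normal rays land in different local components of $B(x_i,\varepsilon)\setminus\gamma_h^i$ — is resolved in the paper by observing that $H_-$ (the component lying in $\Omega_i^-$) is convex in the non-vertex case, so $\zeta_\pm\in H_-$ would force their midpoint $x_i\in H_-$, a contradiction; the vertex case is handled by the same convexity argument when $H_-$ is convex and by the opposite-half-plane conclusion of Lemma~\ref{lem:simple} when it is concave.
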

\begin{proof}
  The set $\ell_i^+$ is non-empty because $\max_\gamma \Psi_i=
  \max_{\gamma_h^i}\phi <r_n$.  By definition, $\ell_i^+\cap
  \gamma_h^i=\emptyset$. Since $\ell_i^+$ is connected, it is either
  contained in $\Omega_i^-$ or in $\Omega_i^+$. Hence we prove the
  proposition by demonstrating that $\ell_i^+\cap\Omega_i^+\neq
  \emptyset$.

  Following Proposition \ref{prop:components}, let $x_i\in \gamma_h^i$
  be such that $\pi(x_i)=\xi$.
  Consider first the case in which $x_i$ is not a vertex in
  $\gamma_h^i$.  Let $e_{ab}$ be the edge in $\gamma_h^i$ that
  contains $x_i$. Since $\gamma_h^i$ is a Jordan curve, we know that
  there exists $\delta>0$ (possibly depending on $x_i$) such that
  ${B(x_i,\delta)}\cap \gamma_h^i$ is a connected set. Noting that
  $d(x_i,a),d(x_i,b)>0$ from $x_i\in{\bf ri}(\,e_{ab})$, and that
  $r_n\pm \Psi_i(\xi)>0$ from Proposition \ref{prop:ordering} and
  assumption \eqref{eq:h-global} choose $\varepsilon>0$ such that
  \begin{align}
    \varepsilon<\min\left\{\delta,d(x_i,a),d(x_i,b),r_n\pm\Psi_i(\xi)\right\}. \label{eq:w1}
  \end{align}
  In particular, $\varepsilon<\min\{\delta,d(x_i,a),d(x_i,b)\}$
  implies that ${B(x_i,\varepsilon)}\cap\gamma_h^i =
  {B(x_i,\varepsilon)}\cap e_{ab}$. Hence,
  ${B(x_i,\varepsilon)}\setminus\gamma_h^i$ has precisely two
  connected components $H_-$ and $H_+$, defined as $H_\pm = \left(
    B(x_i,\varepsilon)\setminus\gamma_h^i \right)\cap \Omega_i^\pm$.
  In particular, $H_-$ is a convex set (being the interior of a half
  disc).

  For the given point $\xi\in \gamma$, let $\ell_i^-$ be as defined in
  Proposition \ref{prop:lminus} and set $\zeta_\pm := x_i\pm
  (\varepsilon/2)\,\hat{N}(\xi)$.  From the definition of $x_i$ and
  $\zeta_\pm$, we get
  \begin{align}
    (\zeta_\pm-\xi)\cdot\hat{N}(\xi) = \Psi_i(\xi)\pm \frac{\varepsilon}{2}.
    \label{eq:w4}
  \end{align}
  From \eqref{eq:w4} and $0<\varepsilon<r_n-\Psi_i(\xi)$, we get
  $\zeta_+\in \ell_i^+\cap B(x_i,\varepsilon)$. Similarly,
  \eqref{eq:w4} and $0<\varepsilon<r_n+\Psi_i(\xi)$ show that
  $\zeta_-\in\ell_i^-\cap B(x_i,\varepsilon)$.  Using the latter and
  Proposition \ref{prop:lminus}, we get
  \begin{align}
    \ell_i^-\subset \Omega_i^- \Rightarrow \ell_i^-\cap
    B(x_i,\varepsilon)\subset H_- \Rightarrow \zeta_-\in
    H_-. \label{eq:w5}
  \end{align}
  Note that $\zeta_+\neq x_i \Rightarrow \zeta_+\notin
  \gamma_h^i$. Also, $\zeta_+\in H_-$ yields a contradiction because
  using $\zeta_-\in H_-$ and the convexity of $H_-$, we get
  \begin{align}
    \zeta_+\in H_-\Rightarrow \frac{1}{2}(\zeta_-+\zeta_+)\in H_-
    \Rightarrow \gamma_h^i\ni x_i \in H_- \Rightarrow \gamma_h^i\cap
    \Omega_i^-\neq \emptyset. \label{eq:w6}
  \end{align}
  Hence we get the required conclusion that
  \begin{align}
    \zeta_+\in H_+ \Rightarrow \ell_i^+\cap H_+\neq \emptyset
    \Rightarrow \ell_i^+\cap \Omega_i^+\neq \emptyset. \notag
  \end{align}

  The case in which $x_i$ is a vertex is similar. For brevity, we only
  provide a sketch of the proof and omit details. By Lemma
  \ref{cor:two-vertex}, precisely two positive edges in $\gamma_h^i$
  intersect at $x_i$. Let these edges be $e_{x_ia}$ and
  $e_{x_ib}$. Choose $\varepsilon$ as in \eqref{eq:w1} and define
  $H_\pm$ as done above.  Define $\zeta_\pm$ as above
  and note that $\zeta_-\in H_-$ as done in \eqref{eq:w5}. The main
  difference compared to the case when $x_i$ is not a vertex is that
  now, $H_-$ is either a convex or a concave set. If $H_-$ is convex,
  arguing as in \eqref{eq:w6} shows that $\zeta_+\in H_+$. To show
  $\zeta_+\in H_+$ when $H_-$ is concave, it is convenient to adopt a
  coordinate system. The essential step is noting that
  $\hat{T}(\xi)\cdot\hat{U}_{x_ia}$ and
  $\hat{T}(\xi)\cdot\hat{U}_{x_ib}$ have opposite (and non-zero) signs
  as shown by Lemma \ref{lem:simple}.
\end{proof}

\begin{corollary}
  Let $i,j\in\{1,\ldots,m\}$. If $\Psi_i(\zeta)<\Psi_j(\zeta)$ for
  some $\zeta\in \gamma$, then $\gamma_h^j\subset \Omega_i^+$.
  \label{cor:compinomega+}
\end{corollary}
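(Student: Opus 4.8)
The plan is to reduce the corollary to a single invocation of Proposition~\ref{prop:lplus}, using the fact that $\gamma_h^j$ is a connected set disjoint from $\gamma_h^i$. First I would note that the hypothesis $\Psi_i(\zeta)<\Psi_j(\zeta)$ forces $i\neq j$, so $\gamma_h^i$ and $\gamma_h^j$ are distinct connected components of $\gamma_h$ and hence $\gamma_h^i\cap\gamma_h^j=\emptyset$. Since $\gamma_h^j$ is a (simple, closed, hence) connected curve by Lemma~\ref{lem:simple-closed} and it misses $\gamma_h^i$, it is contained in a single connected component of $\mathbb R^2\setminus\gamma_h^i$, i.e. entirely in $\Omega_i^-$ or entirely in $\Omega_i^+$. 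Thus it suffices to exhibit one point of $\gamma_h^j$ lying in $\Omega_i^+$.

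To produce such a point, I would first apply Proposition~\ref{prop:ordering}(iii) to upgrade the pointwise inequality $\Psi_i(\zeta)<\Psi_j(\zeta)$ to $\Psi_i<\Psi_j$ on all of $\gamma$. Fix any $\xi\in\gamma$. By \eqref{eq:conn-homeo}, there is a unique $x_j\in\gamma_h^j$ with $\pi(x_j)=\xi$, and Theorem~\ref{thm:nbd} together with $\phi(x_j)=\Psi_j(\xi)$ gives $x_j=\xi+\Psi_j(\xi)\,\hat N(\xi)$. Now combine $\Psi_i(\xi)<\Psi_j(\xi)$ with the bound $\Psi_j(\xi)\le h_K<r_n$ coming from Proposition~\ref{prop:ordering}(i) and \eqref{eq:h-global} (where $K\in\mathcal P_h$ is the positively cut triangle whose positive edge contains $x_j$): this places $x_j$ in the open collar $\ell_i^+=\{\xi+\lambda\,\hat N(\xi):\Psi_i(\xi)<\lambda<r_n\}$ of Proposition~\ref{prop:lplus}. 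Since that proposition asserts $\ell_i^+\subset\Omega_i^+$, we obtain $x_j\in\gamma_h^j\cap\Omega_i^+$, and the reduction in the first paragraph then yields $\gamma_h^j\subset\Omega_i^+$.

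There is no genuinely hard step here; Propositions~\ref{prop:ordering} and~\ref{prop:lplus} and the Jordan curve decomposition from Proposition~\ref{prop:omegapm} do all the work. The only point demanding a little care is verifying that $x_j$ really falls inside the \emph{open} set $\ell_i^+$, which boils down to the chain $\Psi_i(\xi)<\Psi_j(\xi)\le h_K<r_n$; one must remember that $h_K$ depends on which positively cut triangle carries $x_j$, but the uniform bound $h_K<r_n$ from \eqref{eq:h-global} makes this harmless.
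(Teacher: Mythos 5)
Your proof is correct and follows essentially the same route as the paper: the key step in both is placing a point of $\gamma_h^j$ in $\ell_i^+\subset\Omega_i^+$ via Propositions~\ref{prop:ordering} and~\ref{prop:lplus}. The paper's version simply applies that argument directly to an arbitrary $x\in\gamma_h^j$ (with $\xi=\pi(x)$), which makes your preliminary connectedness reduction unnecessary, though harmless.
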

\begin{proof}
  For an arbitrary point $x\in \gamma_h^j$, let $\xi=\pi(x)$ and
  define $\ell_i^+$ as in \eqref{eq:lplus-main}. Since
  $\Psi_i(\zeta)<\Psi_j(\zeta)$, Proposition \ref{prop:ordering} shows
  that $\Psi_i(\xi)<\Psi_j(\xi)$. From part (i) of the same
  proposition and \eqref{eq:h-global}, we also know that
  $\Psi_j<r_n$. Hence we get that $x\in \ell_i^+$. Since
  $\ell_i^+\subset \Omega_i^+$ (Proposition \ref{prop:lplus}), $x\in
  \ell_i^+\Rightarrow x\in \Omega_i^+$. Since $x\in \gamma_h^j$ was
  arbitrary, we conclude that $\gamma_h^j\subset \Omega_i^+$.
\end{proof}

\begin{proposition}
  Let $i,j\in \{1,\ldots,m\}$ and $K=(a,b,c)\in {\cal P}_h$ have
  positive edge $e_{ab}\subset \gamma_h^j$. If $\gamma_h^j\subset
  \Omega_i^+$, then $\overline{K}\subset \Omega_i^+$.
  \label{prop:componenttriangles}
\end{proposition}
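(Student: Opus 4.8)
The plan is to reduce everything to the topological fact that $\overline{K}$ is a connected set disjoint from the Jordan curve $\gamma_h^i$. Granting this, $\overline{K}$ lies entirely inside one of the two components of ${\mathbb R}^2\setminus\gamma_h^i$ furnished by Proposition \ref{prop:omegapm}, namely $\Omega_i^-$ or $\Omega_i^+$; since $e_{ab}\subset\overline{K}$ and, by hypothesis, $e_{ab}\subset\gamma_h^j\subset\Omega_i^+$, that component must be $\Omega_i^+$, and so $\overline{K}\subset\Omega_i^+$.

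Thus the work is to show $\overline{K}\cap\gamma_h^i=\emptyset$. First note that the hypothesis $\gamma_h^j\subset\Omega_i^+$ gives $\gamma_h^i\cap\gamma_h^j=\emptyset$, since $\Omega_i^+$, being a component of ${\mathbb R}^2\setminus\gamma_h^i$, is disjoint from $\gamma_h^i$. I would then check that no vertex of $K$ lies on $\gamma_h^i$: the vertices $a$ and $b$ of the positive edge $e_{ab}$ lie in $\gamma_h^j\subset\Omega_i^+$, hence not on $\gamma_h^i$; and since $K\in{\cal P}_h$ has positive edge $e_{ab}$, its third vertex $c$ satisfies $\phi(c)<0$, so $c$ is an endpoint of no positive edge (those have $\phi\geq0$ at both endpoints), while $c$, being a vertex of ${\cal T}_h$, cannot lie in the relative interior of a mesh edge either; hence $c\notin\Gamma_h\supset\gamma_h^i$.

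Next I would show that each positive edge $e'$ making up $\gamma_h^i$ satisfies $e'\cap\overline{K}=\emptyset$. If $e'$ were an edge of $K$ it would be one of $e_{ab},e_{ac},e_{bc}$; but $e_{ac}$ and $e_{bc}$ are not positive edges since $\phi(c)<0$, while $e_{ab}\subset\gamma_h^j$ is disjoint from $\gamma_h^i$---contradicting $e'\subset\gamma_h^i$---so $e'$ is not an edge of $K$. Since ${\cal T}_h$ is a triangulation, a mesh edge that is not an edge of $K$ meets $\overline{K}$ in at most a vertex of $K$; and such a vertex would lie on $e'\subset\gamma_h^i$, contradicting the previous step. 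Hence $\gamma_h^i=\bigcup e'$ is disjoint from $\overline{K}$, and the reduction in the first paragraph finishes the proof. The only mildly delicate point is this combinatorial bookkeeping about how a positive edge of $\gamma_h^i$ could touch $\overline{K}$, together with the observation that the vertex $c$ with $\phi(c)<0$ automatically avoids $\gamma_h^i$; the genuinely topological input---the Jordan curve theorem for $\gamma_h^i$ and the labelling of $\Omega_i^\pm$---is already supplied by Proposition \ref{prop:omegapm}.
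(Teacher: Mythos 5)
Your proof is correct and matches the paper's argument in structure and substance: reduce to showing $\overline{K}\cap\gamma_h^i=\emptyset$, then use connectedness of $\overline{K}$ and the two-component decomposition of ${\mathbb R}^2\setminus\gamma_h^i$, pinning down the component by $e_{ab}\subset\Omega_i^+$. The one (minor, favorable) difference is that you make explicit the conformity facts the paper uses tacitly---that a mesh edge not belonging to $K$ can meet $\overline{K}$ only at a vertex, and that $c$ cannot sit in the relative interior of another mesh edge---which tightens the bookkeeping without changing the route.
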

\begin{proof}
  Note that $\gamma_h^j\subset \Omega_i^+$ immediately implies $i\neq
  j$.  Since $\gamma_h^i$ is a collection of positive edges, the set
  $K_h^i:=\overline{K}\cap \gamma_h^i$ is either empty, or a vertex of
  $K$ or an edge of $K$. From $i\neq j$, we get
  \begin{align}
    e_{ab}\cap \gamma_h^i \subseteq \gamma_h^j\cap\gamma_h^i=\emptyset. \label{eq:z1}
  \end{align}
  Therefore, neither $a$ nor $b$ belong to $K_h^i$. Hence $K_h^i$ does
  not contain any edge of $K$. Since every vertex in $\gamma_h^i$ has
  $\phi\geq 0$ but $\phi(c)<0$, $c\notin K_h^i$. Therefore we conclude
  that $K_h^i=\emptyset$.
  
  Since $\overline{K}$ is a connected set and
  $K_h^i=\overline{K}\cap\gamma_h^i =\emptyset$, either
  $\overline{K}\subset \Omega_i^+$ or $\overline{K}\subset
  \Omega_i^-$. However, $e_{ab}\subset\gamma_h^j\subset\Omega_i^+$
  shows that $\overline{K}\cap \Omega_i^+\neq \emptyset$. Hence
  $\overline{K}\subset\Omega_i^+$.
\end{proof}

\begin{proposition}
  Let $K=(a,b,c)\in {\cal T}_h$ and $e_{ab}\subset \gamma_h^i$. Then
  \begin{align}
    \overline{K} \cap \Omega_i^\pm\neq \emptyset
    ~\Rightarrow~ \overline{K}\setminus \gamma_h^i\subset
    \Omega_i^\pm.
    \label{eq:triside-a}
  \end{align}
  \label{prop:triside-a}
\end{proposition}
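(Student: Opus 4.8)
The plan is to prove that $\overline{K}\setminus\gamma_h^i$ is a \emph{connected} set disjoint from $\gamma_h^i$; then it lies entirely inside one of the two components $\Omega_i^-,\Omega_i^+$ of ${\mathbb R}^2\setminus\gamma_h^i$, and \eqref{eq:triside-a} is immediate: if $\overline{K}\cap\Omega_i^\pm\neq\emptyset$, then since $\Omega_i^\pm\cap\gamma_h^i=\emptyset$ this intersection lies in $\overline{K}\setminus\gamma_h^i$, forcing $\overline{K}\setminus\gamma_h^i\subset\Omega_i^\pm$.

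The first step is to check that $\gamma_h^i\cap\text{int}(K)=\emptyset$. The set $\gamma_h^i$ is a finite union of positive edges, each of which is an edge of some triangle $K'\in{\cal T}_h$. If a point $z$ of such an edge belonged to $\text{int}(K)$, then $z$ would also be a boundary point of the non-degenerate triangle $K'$, hence a limit of points of $\text{int}(K')$; as $\text{int}(K)$ is open, this would give $\text{int}(K)\cap\text{int}(K')\neq\emptyset$. When $K'=K$ this is absurd, since the edge lies in $\partial K$; when $K'\neq K$ it contradicts the pairwise disjointness of triangle interiors in a triangulation. Consequently $\text{int}(K)\subset\overline{K}\setminus\gamma_h^i$, so the latter set is nonempty and, trivially, disjoint from $\gamma_h^i$.

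Next I would show $\overline{K}\setminus\gamma_h^i$ is path-connected by joining each of its points to $\text{int}(K)$. Points of $\text{int}(K)$ require nothing, as $\text{int}(K)$ is convex. If $x\in\overline{K}\setminus\gamma_h^i$ lies on $\partial K$, then $d(x,\gamma_h^i)>0$ because $\gamma_h^i$ is compact, so $B(x,\delta)\cap\gamma_h^i=\emptyset$ for some $\delta>0$; choosing $x'\in B(x,\delta/2)\cap\text{int}(K)$ (possible since $\text{int}(K)$ is dense in $\overline{K}$), the segment from $x$ to $x'$ lies in $\overline{K}$ by convexity and within $B(x,\delta)$ because of its length, hence avoids $\gamma_h^i$, and one concatenates a segment inside $\text{int}(K)$. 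Thus $\overline{K}\setminus\gamma_h^i$ is connected, contained in $\Omega_i^-\cup\Omega_i^+$, and therefore in exactly one of $\Omega_i^\pm$; combined with the first paragraph this yields \eqref{eq:triside-a}.

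The only place where more than elementary point-set topology is needed is the first step, where one must exclude a positive edge of $\gamma_h^i$ passing through the interior of $K$; this is precisely where the triangulation hypothesis (disjoint triangle interiors) is used. Everything after that is just the topology of a convex set with a closed set removed, so I do not anticipate a genuine obstacle here.
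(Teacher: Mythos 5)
Your proposal is correct, and it reaches the conclusion by a mildly different route than the paper. The paper makes the sharper observation that since $\overline{K}\cap\gamma_h^i$ is a union of edges (and possibly extreme vertices) of $K$, the set $\overline{K}\setminus\gamma_h^i$ is \emph{convex}; it then argues by contradiction that a straight segment joining a point of $\overline{K}\cap\Omega_i^-$ to a point of $\overline{K}\cap\Omega_i^+$ would have to cross $\gamma_h^i$, yet stays in $\overline{K}\setminus\gamma_h^i$ by convexity. You instead prove the weaker property of \emph{path-connectedness} directly, by first establishing $\gamma_h^i\cap\text{int}(K)=\emptyset$ from the disjointness of triangle interiors, and then joining any boundary point of $\overline{K}\setminus\gamma_h^i$ to the interior by a short segment that stays away from the compact set $\gamma_h^i$. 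The underlying geometric fact is the same in both (positive edges of $\gamma_h^i$ can only touch $\partial K$, so the interior of the triangle is not cut), but your route is more explicit about why that fact holds (the paper simply asserts "$\overline{K}\cap\gamma_h^i$ is a union of one or more edges of $K$") at the cost of a somewhat longer path-construction argument in place of the paper's one-line appeal to convexity. Both arguments then finish identically by invoking that a connected set disjoint from the Jordan curve $\gamma_h^i$ must lie in a single component of its complement.
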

\begin{proof}
  It is convenient to consider the cases $\overline{K}\cap
  \Omega_i^-\neq \emptyset$ and $\overline{K}\cap \Omega_i^+\neq
  \emptyset$ simultaneously. Below we argue by contradiction to
  demonstrate that $\overline{K}\cap \Omega_i^\pm\neq \emptyset
  \Rightarrow \left(\overline{K}\setminus \gamma_h^i\right) \cap
  \Omega_i^\mp=\emptyset$. Then \eqref{eq:triside-a} follows from
  recalling that $\Omega_i^-,\Omega_i^+$ and $\gamma_h^i$ are pairwise
  disjoint, and that their union equals ${\mathbb R}^2$.
  
  To this end, let $x \in \overline{K}\cap \Omega_i^\pm$. Since
  $\gamma_h^i\cap \Omega_i^\pm=\emptyset$, $x\in
  \left(\overline{K}\setminus \gamma_h^i\right)\cap \Omega_i^\pm$.
  Suppose there exists $y\in \left(\overline{K}\setminus
    \gamma_h^i\right)\cap \Omega_i^\mp$. The assumptions $x\in
  \Omega_i^\pm$ and $y\in \Omega_i^\mp$ imply that line segment
  joining $x$ and $y$ necessarily intersects $\gamma_h^i$. Let point
  $z$ belong to this intersection. Since $\overline{K}\cap \gamma_h^i$
  is a union of one or more edges of $K$, $\overline{K}\setminus
  \gamma_h^i$ is a convex set. Therefore $x,y\in \overline{K}\setminus
  \gamma_h^i\Rightarrow z\in \overline{K}\setminus \gamma_h^i$, which
  contradicts the fact that $z\in \gamma_h^i$. This proves that
  $\overline{K}\cap \Omega_i^\pm\neq \emptyset \Rightarrow
  \left(\overline{K}\setminus \gamma_h^i\right)\cap
  \Omega_i^\mp=\emptyset$.
\end{proof}
\begin{remark}
  In the proposition above, $\overline{K}\setminus\gamma_h^i$ can be
  different from $\overline{K}\setminus e_{ab}$. Of course, if $K$ is
  positively cut, then neither $e_{ac}$ nor $e_{bc}$ can be positive
  edges and the proposition indeed states that $\overline{K}\cap
  \Omega_i^\pm\neq \emptyset \Rightarrow \overline{K}\setminus
  e_{ab}\subset \Omega_i^\pm$.  However, this need not be the case if
  $K$ is not positively cut. While $\gamma_h^i$ being simple
  (Proposition \ref{prop:components}) precludes the possibility of all
  three edges of $K$ being positive edges, it is possible that
  $K\notin {\cal P}_h$ has two positive edges. In such a case,
  $\overline{K}\setminus \gamma_h^i\subsetneq \overline{K}\setminus \overline e_{ab}$.
\end{remark}

\begin{proposition}
  Let $K_\pm=(a,b,c_\pm)\in {\cal T}_h$ and $1\leq i\leq m$. If
  $K_-\in {\cal P}_h$ and $e_{ab}\subset \gamma_h^i$, then
  $\overline{K}_\pm \setminus \gamma_h^i \subset \Omega_i^\pm$.
  \label{prop:triside-c}
\end{proposition}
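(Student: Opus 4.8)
The plan is to deduce the statement from Proposition~\ref{prop:triside-a} applied separately to $K_-$ and to $K_+$, each of which has $e_{ab}\subset\gamma_h^i$ as an edge: by that proposition it suffices to exhibit one point of $\overline K_-$ lying in $\Omega_i^-$ and one point of $\overline K_+$ lying in $\Omega_i^+$. First I would record the elementary facts I will use. When $K_+\ne K_-$, these are distinct triangles of ${\cal T}_h$ sharing exactly $e_{ab}$, hence they lie on opposite closed sides of the line $\ell$ through $a$ and $b$; by Lemma~\ref{lem:uniq-id} the positive edge $e_{ab}$ belongs to only one positively cut triangle, so $\phi(c_+)\ge 0$ while $\phi(c_-)<0$; and by Proposition~\ref{prop:well-def}(iii), $\eta_{K_-}>0$, i.e. $\phi(c_-)<\min\{\phi(a),\phi(b)\}$, so $c_-$ lies on the side of $\ell$ toward which $\phi$ decreases.

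Next I would fix $x_0\in\textbf{ri}(e_{ab})$ and set $\xi:=\pi(x_0)$. Since $e_{ab}\subset\gamma_h^i\subset\gamma_h=\{x\in\Gamma_h:\pi(x)\in\gamma\}$ we have $\xi\in\gamma$, and since $x_0$ is the unique point of $\gamma_h^i$ with $\pi(x_0)=\xi$ we get $\Psi_i(\xi)=\phi(x_0)$. Writing $x_0=\xi+\phi(x_0)\hat N(\xi)$ (Theorem~\ref{thm:nbd}), the sets $\ell_i^-(\xi)$ and $\ell_i^+(\xi)$ of Propositions~\ref{prop:lminus} and~\ref{prop:lplus} are precisely the portions of the normal line at $\xi$ lying immediately below and immediately above $x_0$; hence for all sufficiently small $\delta>0$ one has $x_0-\delta\hat N(\xi)\in\ell_i^-(\xi)\subset\Omega_i^-$ and $x_0+\delta\hat N(\xi)\in\ell_i^+(\xi)\subset\Omega_i^+$. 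It therefore suffices to show that $x_0-\delta\hat N(\xi)\in\overline K_-$ for small $\delta$: indeed $\hat N(\xi)$ is transverse to $\ell$ because $|\hat N(\xi)\cdot\hat U_{ab}^\perp|=|\hat T(\xi)\cdot\hat U_{ab}|>0$ by Proposition~\ref{prop:angle-normal}, so $x_0+\delta\hat N(\xi)$ then automatically lies in $\overline K_+$, the triangle on the opposite side of $\ell$ (using also, as in the proof of Proposition~\ref{prop:lplus}, that near a non-vertex point of the Jordan curve $\gamma_h^i$ a small ball meets $\gamma_h^i$ only in $e_{ab}$). Since the interior of $K_-$ lies on the $c_-$-side of $\ell$, what remains is the purely local, geometric assertion that $-\hat N(\xi)$ and $c_{-}-x_0$ point to the same side of $\ell$, i.e. $\bigl(\hat N(\xi)\cdot\hat U_{ab}^\perp\bigr)\bigl((c_{-}-a)\cdot\hat U_{ab}^\perp\bigr)<0$.

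Establishing this sign is the heart of the matter and the step I expect to be the main obstacle. Heuristically it is forced: moving off $e_{ab}$ in the direction $-\hat N(\xi)$ decreases the signed distance and so heads toward $\Omega$, hence toward the unique vertex $c_-$ at which $\phi<0$; the work is to make this quantitative, since $e_{ab}$ itself need not meet $\Gamma$. Concretely I would expand $\phi$ along a short segment joining $c_-$ to a conveniently placed $x_0\in\textbf{ri}(e_{ab})$ (for instance close to the proximal vertex, where~\eqref{eq:cond-angle} enters), using $\nabla\phi=\hat N\circ\pi$ together with the curvature bound $\|\nabla\nabla\phi\|\le C_{K_-}^h$ on $\overline{B(K_-,h_{K_-})}$ (Proposition~\ref{prop:gradpi}, valid since $M_{K_-}h_{K_-}<1$ by~\eqref{eq:sigmaCh}), and then rewrite the target sign in the frame $\{\hat U_{ab},\hat U_{ab}^\perp\}$. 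Feeding in $\phi(c_-)<\min\{\phi(a),\phi(b)\}$, the bound $\hat N(\xi)\cdot\hat U_{ab}\in[-\tfrac{3}{2}C_{K_-}^h h_{K_-},\,\cos(\beta_{K_-}-\vartheta_{K_-})]$ from Proposition~\ref{prop:angle-normal}, and the lower bound $\phi(x_0)>-C_{K_-}^h h_{K_-}^2$ on the positive edge from~\eqref{eq:dist-estimate} reduces the claim to a strict inequality that the mesh-size and adjacent-angle hypotheses~\eqref{eq:sigmaCh}--\eqref{eq:Ch} are calibrated to guarantee (in particular the delicate case is when $c_-$ sits within $O(C_{K_-}^h h_{K_-}^2)$ of $\Gamma$); this is the same flavour of estimate as in Appendix~\ref{sec:distances-angles}. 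With the sign — hence $x_0\mp\delta\hat N(\xi)\in\overline K_\mp$ — in hand, Proposition~\ref{prop:triside-a} applied to $K_-$ and to $K_+$ delivers $\overline K_-\setminus\gamma_h^i\subset\Omega_i^-$ and $\overline K_+\setminus\gamma_h^i\subset\Omega_i^+$, which is the claim.
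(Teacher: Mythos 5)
Your overall architecture matches the paper's: place one point of $\overline K_-$ in $\Omega_i^-$ and one of $\overline K_+$ in $\Omega_i^+$ along the normal line through $\pi(x_0)$, then finish with Proposition~\ref{prop:triside-a}. You also correctly reduce this to the sign relation $\bigl(\hat N(\xi)\cdot\hat U_{ab}^\perp\bigr)\bigl((c_{-}-a)\cdot\hat U_{ab}^\perp\bigr)<0$, i.e.\ that the normal at $\pi(x_0)$ exits $e_{ab}$ into $K_-$ on the $-\hat N$ side. But you leave exactly that step unproved, offering only a heuristic Taylor-expansion sketch, and you concede it is ``the main obstacle.''

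That is a genuine gap, and one the paper already fills. The sign relation is precisely the content of Proposition~\ref{prop:triside-b} (Appendix~\ref{sec:topology}): for $y$ on the normal line through $\pi(x)$ with $y\in K_\pm$, one has $\hat U_{xy}\cdot\hat N(\pi(x))=\pm 1$. The paper's proof of Proposition~\ref{prop:triside-c} simply cites it: pick $y_\pm\in\ell\cap K_\pm$ (nonempty since $|\hat n\cdot\hat U_{ab}|<1$ by Proposition~\ref{prop:angle-normal}), apply Proposition~\ref{prop:triside-b} to place $y_\pm\in\ell_i^\pm$, then use Propositions~\ref{prop:lminus}, \ref{prop:lplus} and finally~\ref{prop:triside-a}. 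Proposition~\ref{prop:triside-b} is itself a corollary of the orientation result Proposition~\ref{lem:top}, $\mathrm{sgn}\bigl(\hat T(\pi(x))\cdot\hat U_{ab}\bigr)=\mathrm{sgn}\bigl(\hat U_{ca}\cdot\hat U_{ab}^\perp\bigr)$, whose proof goes through the angle-order lemma Proposition~\ref{lem:sameside}; this is where the acute conditioning-angle hypothesis~\eqref{eq:cond-angle} and the appendix angle estimates actually do the work. The Taylor-expansion argument you gesture at is in the right spirit (those appendix lemmas are themselves built on Corollary~\ref{cor:phi-estimate}), but it is not what the paper does; it would still have to reproduce the $\alpha_c>\alpha_b$ comparison of Proposition~\ref{lem:sameside} to become a proof, and as written you merely assert that the mesh hypotheses are ``calibrated'' to make it go through without verifying the resulting inequality. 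The remedy is short: invoke Proposition~\ref{prop:triside-b} rather than re-deriving it.
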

\begin{proof}
  Let $x\in \textbf{ri}\,\left(e_{ab}\right)$,
  $\hat{t}:=\hat{T}(\pi(x))$, $\hat{n}:=\hat{N}(\pi(x))$ and
  $\ell:=\{\pi(x)+\lambda\,\hat{n}\,:\,-r_n<\lambda<r_n\}$. Since
  Proposition \ref{prop:angle-normal} shows $|\hat{n}\cdot
  \hat{U}_{ab}|<1$ and $h_{K_\pm}<r_n$, we know $\ell\cap K_\pm\neq
  \emptyset$. Hence pick $y_\pm \in \ell\,\cap K_\pm$ and note from
  Proposition \ref{prop:triside-b} that $\hat{U}_{xy_\pm}=\pm\hat{n}$.
  Consequently, $y_\pm\in \ell_i^\pm$ whence
 \begin{align}
   y_\pm\in \ell_i^\pm ~\Rightarrow~ K_\pm\cap \ell_i^\pm\neq
   \emptyset ~ \Rightarrow ~ K_\pm\cap \Omega_i^\pm\neq \emptyset
   ~\Rightarrow~ \overline{K}_\pm\setminus \gamma_h^i\subset
   \Omega_i^\pm,
   \label{eq:ts-7}
 \end{align}
 where we have used Propositions \ref{prop:lminus} and
 \ref{prop:lplus} for the penultimate, and Proposition
 \ref{prop:triside-a} for the last implication.
\end{proof}

\begin{proof}[Proof of Lemma \ref{lem:connected}]
  We need to show that $\gamma_h$ has only one connected component,
  i.e., that $m=1$.  We prove this by supposing that $m>1$ and
  arriving at a contradiction. Hence we suppose that $\gamma_h^1$ and
  $\gamma_h^2$ are connected components of $\gamma_h$. Proposition
  \ref{prop:ordering} shows that either $\Psi_1<\Psi_2$ or
  $\Psi_1>\Psi_2$ on $\gamma$. Without loss of generality, let us
  assume the former and note using Corollary \ref{cor:compinomega+}
  that $\gamma_h^2\subset \Omega_1^+$.
  
  Consider any positive edge $e_{uv}\subset \gamma_h^2$. By
  definition, we can find vertex $w$ such that triangle $K=(u,v,w)\in
  {\cal P}_h$ and $\phi(w)<0$. Since $\gamma_h^2\subset \Omega_1^+$,
  Proposition \ref{prop:componenttriangles} in particular shows that
  $w\in \Omega_1^+$. Below, we demonstrate that
  $w\in\Omega_1^+\Rightarrow \phi(w)\geq0$ to contradict the fact that
  $\phi(w)<0$.

  From \eqref{eq:h-global}, we know that $\xi:=\pi(w)$ is well
  defined. For this choice of $\xi$, let $\ell_1^\pm$ be as defined in
  \eqref{eq:lminus-main} and \eqref{eq:lplus-main}. From
  $|\phi(w)|<r_n$ (from Proposition \ref{prop:well-def} and
  \eqref{eq:h-global}) and $w=\xi+\phi(w)\,\hat{N}(\xi)$, we know
  $w\in\ell_1^-\cup\{x\}\cup\ell_1^+$. Clearly, $w\neq x$ because $x$
  is a point on a positive edge while $w$ is not. Since
  $w\in\Omega_1^+$, Proposition \ref{prop:lminus} shows that $w\notin
  \ell_1^-$. Since $\ell_1^-,\{x\},\ell_1^+$ are pairwise disjoint, we
  conclude that $w\in \ell_1^+$. Therefore
  $\phi(w)>\Psi_1(\xi)=\phi(x)$ and hence
  \begin{align}
    \phi(w)=\phi(x)+d(w,x). \label{eq:conn-1}
  \end{align}
  If $\phi(x)\geq 0$, \eqref{eq:conn-1} together with $x\neq w$ shows
  that $\phi(w)>0$ yielding the required contradiction.

  The case $\phi(x)<0$ remains.  In the following, we identify a point
  $y$ such that $\phi(w)>\phi(y)>0$ to arrive at the required
  contradiction.  To this end, following Proposition
  \ref{prop:components}, let $x\in \gamma_h^1$ be such that
  $\pi(x)=\xi$. Let $x$ belong to a positive edge $e_{ab}\subset
  \gamma_h^1$. Let $K_-=(a,b,c)\in {\cal P}_h$ be the triangle with
  positive edge $e_{ab}$; existence of $K_-$ follows from the
  definition of $e_{ab}$ being a positive edge and uniqueness follows
  from Lemma \ref{lem:uniq-id}.  Since $\phi(a)\geq 0$ and
  $\phi(x)<0$, continuity of $\phi$ on $e_{ab}$ shows that $\phi=0$ at
  some point in $e_{ab}$, i.e., $\exists\, z\in e_{ab}\cap
  \gamma$. Since $\gamma$ is immersed in ${\cal T}_h$, we can find a
  sufficiently small $\varepsilon>0$ such that
  $B(z,\varepsilon)\subset \text{int}(\omega_h)$, where $\omega_h$ is
  the polygonal domain triangulated by ${\cal T}_h$. In particular,
  the existence of such a ball shows that we can find triangle
  $K_+=(a,b,c_+)\in {\cal T}_h$ that has edge $e_{ab}$ in common with
  triangle $K_-$. From Lemma \ref{lem:uniq-id}, we know that
  $K_+\notin {\cal P}_h$ and hence that $\phi(c_+)>0$.

  Since $|\hat{N}(\xi)\cdot \hat{U}_{ab}|<1$ (Proposition
  \ref{prop:angle-normal}), the line
  $\ell=\{x+\lambda\,\hat{N}(\xi),~\lambda\in{\mathbb R}\}$
  necessarily intersects either $e_{ac_+}$ or $e_{bc_+}$. Without loss
  of generality, let us assume that $\ell$ intersects $e_{ac_+}$ at
  point $y$. Since $\pi$ is injective on $\gamma_h^1$ (Proposition
  \ref{prop:components}), $\pi(y)=\xi=\pi(x) \Rightarrow y\notin
  \gamma_h^1$. Since $y\in \overline{K}_+\setminus \gamma_h^1$ and
  Proposition \ref{prop:triside-c} shows $\overline{K}_+\setminus
  \gamma_h^1\subset \Omega_1^+$, we know $y\in \Omega_1^+$. Then,
  repeating the arguments used to show $w\in \ell_1^+$ and
  \eqref{eq:conn-1} also demonstrate that $y\in \ell_1^+$ and that
  \begin{align}
    \phi(y)= \phi(x)+d(x,y). \label{eq:conn-2}
  \end{align}
  By definition of $\vartheta_{K_-}^\text{adj}$ (see Def. 2.4(vi)),
  the interior angles in $K_+$ at vertices $a$ and $b$ are greater
  than or equal to $\vartheta_{K_-}^\text{adj}$. Therefore, we have
  $d(x,y)\geq d(a,x)\sin\vartheta_{K_-}^\text{adj}$. Using this and
  the lower bound for $\phi(x)$ from Corollary \ref{cor:phi-lower} in
  \eqref{eq:conn-2}, we get
  \begin{align}
    \phi(y) &\geq
    -2C_{K_-}^hd(a,x)d(a,b)+d(a,x)\sin\vartheta_{K_-}^\text{adj}, \notag\\
    &\geq
    d(a,x)\left(\sin\vartheta_{K_-}^\text{adj}-2C_{K_-}^hd(a,b)\right),\notag
    \\
    &\geq
    d(a,x)\left(\sin\vartheta_{K_-}^\text{adj}-2C_{K_-}^hh_{K_-}\right),
    \notag \\
    &>0. \qquad~ \left(\text{using $d(a,x)>0$ and
        \eqref{eq:Ch}}\right). \label{eq:conn-3}
  \end{align}
  
  Now, $x,y$ and $w$ are collinear points on the line segment
  $\overline{\ell}_1^+=\{\xi+\lambda\,\hat{N}(\xi), \Psi_1(\xi)\leq
  \lambda\leq r_n\}$ with $\lambda=\Psi_1(\xi),\phi(y)$ and $\phi(w)$
  respectively. Notice that vertex $w\notin \overline{K}_+$ because
  $\phi(w)<0$ while $\phi\geq 0$ at $a,b$ and $c_+$. Since
  $\{\xi+\lambda\,\hat{N}(\xi)\,:\,\Psi_1(x)\leq \lambda\leq
  \phi(y)\}\subset \overline{K}_+$, we conclude that $w\in
  \{\xi+\lambda\,\hat{N}(\xi)\,:\,\phi(y)<\lambda\leq r_n\}$ which in
  particular shows that $\phi(w)>\phi(y)$. In conjunction with
  \eqref{eq:conn-3}, we get that $\phi(w)>0$ yielding the required
  contradiction.
  
  In this way, we conclude that $m=1$, i.e.,
  $\gamma_h=\gamma_h^1$. Hence Proposition \ref{prop:components} shows
  that $\gamma_h$ is a connected component of $\Gamma_h$. In turn,
  Lemma \ref{lem:simple-closed} implies that $\gamma_h$ is a simple,
  closed curve.
\end{proof}

  



\section*{Proof of Theorem \ref{thm:parameterization}}
The theorem follows essentially from compiling results we have proved
thus far. 
\begin{romannum}
\item See Lemma \ref{lem:uniq-id}.
\item For a positive edge $e\subset \Gamma_h$, Lemma \ref{lem:jac}
  shows that $\pi$ is $C^1$ on $\textbf{ri}\,(e)$ with the Jacobian
  bounded away from zero. The inverse function theorem then implies
  that $\pi$ is a local $C^1$-diffeomorphism on
  $\textbf{ri}\,(e)$. Since $\pi$ is injective over
  $\textbf{ri}\,(e)$, the assertion follows.
\item See Corollary \ref{cor:phi-lower} for lower bound of $\phi$ and
  Proposition \ref{prop:well-def} for the upper bound. See Lemma
  \ref{lem:jac} for the bounds for the Jacobian.
\item With $m\geq 1$, let $\{\gamma^i\}_{i=1}^m$ be the distinct
  connected components of $\Gamma$. For each $i\in \{1,\ldots, m\}$,
  let $\gamma_h^i:=\{x\in \Gamma_h\,:\,\pi(x)\in \gamma^i\}$. By
  assumption, $\gamma_h^i\neq \emptyset$ for each $i$. It then follows
  from Lemma \ref{lem:connected} that  $\gamma_h^i$ is a simple,
  closed curve and a connected
  component of $\Gamma_h$, and from
  Lemma \ref{lem:homeo} that $\pi:\gamma_h^i\rightarrow \gamma^i$ is a
  homeomorphism, for each $i\in \{1,\ldots, m\}$.

\comment{AL: Details about continuity. One way of doing it would be to
  think about limits: let $\x_n\to x$, $x_n,x\in \Gamma_h$. Is then
  $\pi(x_n)\to \pi(x)$. Clearly this is the case if $x_n,x\in
  \gamma_h^i$ for some $n>n_0$, for some $i$. A condition that would
  guarantee this is that $d(\gamma_h^i,\gamma_h^j)>d_0>0$, for all
  $i\not=j$, namely, the curves are separated. This follows from the
  fact that each $\gamma_h^i$ is a closed set in $\mathbb R^2$, and
  since each one of them is disjoint from the other and $\mathbb R^2$
  is normal (see Munkres, Thm 2.3), then there are open disjoint
  subsets that contain each $\gamma_h^i$}

To show that $\pi:\Gamma_h\rightarrow \Gamma$ is a homeomorphism, it
is enough to show that it is continuous, one-to-one and onto (Theorem
\ref{thm:inv-cont}). Since $\cup_{i=1}^m\gamma^i=\Gamma$ and
$\cup_{i=1}^m\gamma_h^i=\Gamma_h$ by definition, it immediately
follows that $\pi:\Gamma_h\rightarrow \Gamma$ is continuous and
surjective.  It only remains to show that $\pi:\Gamma_h\rightarrow
\Gamma$ is injective.  Since we know from Lemma \ref{lem:homeo} that
$\pi$ is injective on each connected component of $\Gamma_h$, we only
need to consider the possibility that there exist
$j,k\in\{1,\ldots,m\}$ such that $j\neq k$ but $\pi(\gamma_h^j)\cap
\pi(\gamma_h^k)\neq \emptyset$.  Since
$\gamma^{j,k}=\pi(\gamma_h^{j,k})$, we have $\gamma^j\cap \gamma^k\neq
\emptyset$. Since $\gamma^j$ and $\gamma^k$ are connected components
of $\Gamma$, we in fact get $\gamma^j=\gamma^k$. Then Lemma
\ref{lem:connected} implies that the $\gamma_h^j\cup \gamma_h^k$ is a
connected set,
  which contradicts the fact that $\gamma_h^j$ and $\gamma_h^k$ are
  distinct connected components of $\Gamma_h$. 
  
\end{romannum}

\bibliographystyle{siam} 
\bibliography{references}

\appendix

\section{Distance and angle estimates} 
\label{sec:distances-angles} 
We prove Proposition \ref{prop:angle-normal}, the essential angle
estimate required in \S\ref{sec:edge-injective} to show injectivity of
$\pi$ over each positive edge and to bound its Jacobian. We begin with
a corollary of Proposition \ref{prop:gradpi}, that is useful when
estimating $\phi$ and $\nabla\phi$ in positively cut triangles while
knowing just their values at vertices of the
triangle.  \begin{corollary}[of Proposition \ref{prop:gradpi}] Let
  $K\in{\cal P}_h$ and $x,y\in \overline{K}$. Then,
  \begin{subequations}
    \label{eq:phi-gradphi-estimate}
    \begin{align}
      \left|\phi(y)-\left(y-\pi(x)\right)\cdot\hat{N}(\pi(x))\right|
      &\leq \frac{1}{2}C_K^hd(x,y)^2, \label{eq:phi-estimate} \\
      \text{and} \quad \left|\nabla\phi(y)-\nabla\phi(x)\right| &\leq
      C_K^hd(x,y). \label{eq:gradphi-estimate}
    \end{align}
  \end{subequations}
  \label{cor:phi-estimate}
\end{corollary}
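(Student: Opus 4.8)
The plan is to derive both inequalities from a single uniform estimate, namely that the operator norm of the Hessian $\nabla\nabla\phi$ is at most $C_K^h$ everywhere on $\overline{K}$, and then to integrate this estimate along straight segments inside $\overline{K}$, using that $\overline{K}$ is convex.

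First I would verify that the hypotheses of Proposition \ref{prop:gradpi} hold at every point $p\in\overline{K}$. By part (ii) of Proposition \ref{prop:well-def} together with \eqref{eq:h-global}, $d(p,\Gamma)=|\phi(p)|\le h_K<r_n$, so $\overline{K}\subset B(\Gamma,r_n)$ and, by Theorem \ref{thm:nbd}, $\phi$ is $C^2$ on $B(\Gamma,r_n)$ with $\nabla\phi(p)=\hat N(\pi(p))$. Since $d(p,\pi(p))=|\phi(p)|\le h_K$ and $p\in\overline{K}$, we have $\pi(p)\in\overline{B(K,h_K)}\cap\Gamma$, hence $\kappa(\pi(p))\le M_K$; consequently $|\phi(p)\,\kappa_s(\pi(p))|\le M_Kh_K$, and $M_Kh_K<1$ because the requirement $\sigma_KC_K^hh_K>0$ in \eqref{eq:sigmaCh} forces exactly that ($\sigma_K,h_K>0$). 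Now Proposition \ref{prop:gradpi} applies and gives $\nabla\nabla\phi(p)=-\kappa_s(\pi(p))\,\hat T(\pi(p))\otimes\hat T(\pi(p))\big/\bigl(1-\phi(p)\kappa_s(\pi(p))\bigr)$; since $\hat T(\pi(p))\otimes\hat T(\pi(p))$ is a rank-one orthogonal projection and $|1-\phi(p)\kappa_s(\pi(p))|\ge 1-M_Kh_K>0$, the operator norm of $\nabla\nabla\phi(p)$ is at most $M_K/(1-M_Kh_K)=C_K^h$.

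With this in hand, \eqref{eq:gradphi-estimate} is immediate: by convexity of $\overline{K}$ the segment $x+t(y-x)$, $t\in[0,1]$, lies in $\overline{K}$, so $\nabla\phi(y)-\nabla\phi(x)=\int_0^1\nabla\nabla\phi\bigl(x+t(y-x)\bigr)(y-x)\,dt$, whose norm is at most $C_K^h\,d(x,y)$. For \eqref{eq:phi-estimate}, I would first observe that, since $x-\pi(x)=\phi(x)\hat N(\pi(x))$ and $\hat N(\pi(x))=\nabla\phi(x)$, the bracketed quantity equals $\phi(y)-\phi(x)-(y-x)\cdot\nabla\phi(x)$; thus \eqref{eq:phi-estimate} is precisely a first-order Taylor remainder bound. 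Writing $g(t):=\phi\bigl(x+t(y-x)\bigr)$, Taylor's theorem gives $g(1)=g(0)+g'(0)+\int_0^1(1-t)g''(t)\,dt$ with $g(0)+g'(0)=\phi(x)+(y-x)\cdot\nabla\phi(x)$; since $|g''(t)|\le\bigl|\nabla\nabla\phi(x+t(y-x))\bigr|\,d(x,y)^2\le C_K^h d(x,y)^2$, the remainder is bounded by $C_K^h d(x,y)^2\int_0^1(1-t)\,dt=\tfrac12 C_K^h d(x,y)^2$. The only points requiring care are checking the Hessian hypothesis uniformly on $\overline{K}$ — handled above via convexity of $\overline{K}$, the bound $|\phi|\le h_K$, and $M_Kh_K<1$ — and spotting the elementary identity that recasts the left side of \eqref{eq:phi-estimate} as a Taylor remainder; neither is a genuine obstacle, so the proof amounts to a short computation.
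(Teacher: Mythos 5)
Your proposal is correct and follows essentially the same route as the paper's proof: bound $\nabla\nabla\phi$ by $C_K^h$ on the segment from $x$ to $y$ (the paper bounds the directional second derivative $\hat U_{xy}\cdot\nabla\nabla\phi\cdot\hat U_{xy}$, you bound the operator norm of the rank-one Hessian — equivalent here), apply Taylor's theorem along that segment, and use $\nabla\phi(x)=\hat N(\pi(x))$ together with $x=\pi(x)+\phi(x)\hat N(\pi(x))$ to recast $\phi(x)+\nabla\phi(x)\cdot(y-x)$ as $(y-\pi(x))\cdot\hat N(\pi(x))$. The only cosmetic difference is your use of the integral form of the Taylor remainder; the substance is the same.
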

\begin{proof}
  Let $L_{xy}\subset\overline{K}$ be the closed line segment joining
  $x$ and $y$. We have
  \begin{align}
    \max_{L_{xy}}\kappa\circ\pi \leq
    \max_{\overline{K}} \kappa\circ\pi \leq
    \max_{\overline{B(K,h_K)}\cap\Gamma}\kappa  = M_K \label{eq:c1}
  \end{align}
  and $|\phi|\leq h_K$ on $L_{xy}$. From $\sigma_KC_K^hh_K>0$ in
  \eqref{eq:sigmaCh}, it follows that $M_Kh_K<1$. Therefore, Proposition 
  \ref{prop:gradpi} implies the bound
  \begin{align}
    \left|\hat{U}_{xy}\cdot\nabla\nabla\phi(z)\cdot\hat{U}_{xy} \right|
    \leq \frac{\kappa(\pi(z))}{1-\left|\phi(z)\right|\kappa(\pi(z))}
    \leq \frac{M_K}{1-M_Kh_K}=C_K^h \quad \forall z\in L_{xy}. \label{eq:c2}
  \end{align}
  From Taylor's theorem, we have 
  \begin{subequations}
    \label{eq:c3}
    \begin{align}
      \left|\phi(y)-\phi(x)-\nabla\phi(x)\cdot (y-x)\right| &\leq 
      \frac{d(x,y)^2}{2}
      \max_{L_{xy}}\left|\hat{U}_{xy}\cdot\nabla\nabla\phi\cdot\hat{U}_{xy}\right|, \\
      \left|\nabla\phi(y)-\nabla\phi(x)\right| &\leq 
      d(x,y)
      \max_{L_{xy}}\left|\hat{U}_{xy}\cdot\nabla\nabla\phi\cdot\hat{U}_{xy}\right| 
    \end{align}
  \end{subequations}
  Using \eqref{eq:c2} and  $x=\pi(x)+\phi(x)\hat{N}(\pi(x))$ (Theorem \ref{thm:nbd})
  in \eqref{eq:c3} yields  \eqref{eq:phi-gradphi-estimate}.
\end{proof}

\begin{proposition}
  Let $K=(a,b,c)\in {\cal P}_h$ have positive edge $e_{ab}$. Then
  \begin{align}
    \hat{N}(\pi(x))\cdot \hat{U}_{yc} &\leq \cos\beta_K \quad \forall
    x,y\in e_{ab}. \label{eq:lemanglebd0-main}
  \end{align}
  \label{prop:angle-ac}
\end{proposition}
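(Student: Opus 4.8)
The plan is to bound $\hat{N}(\pi(x))\cdot(c-a)$ and $\hat{N}(\pi(x))\cdot(c-b)$ directly from the second‑order estimate \eqref{eq:phi-estimate}, to interpolate these two bounds to an arbitrary $y\in e_{ab}$ by a convex combination, and then to normalise; the factor $\sigma_K$ built into $\cos\beta_K$ is exactly what is needed to absorb the division by $d(y,c)$. Fix $x,y\in e_{ab}$ and set $\hat{n}:=\hat{N}(\pi(x))$; recall $\overline{K}\subset B(\Gamma,r_n)$ (part (ii) of Proposition \ref{prop:well-def} with \eqref{eq:h-global}), so $\hat{n}$ is well defined. Applying \eqref{eq:phi-estimate} at the pairs $(x,a)$ and $(x,c)$ (base point first) and subtracting, then likewise at $(x,b)$ and $(x,c)$, and using $d(x,a),d(x,b),d(x,c)\le h_K$ together with $\phi(c)-\phi(a)\le -\eta_K h_K$ and $\phi(c)-\phi(b)\le -\eta_K h_K$ (immediate from $\eta_K h_K=\min\{\phi(a),\phi(b)\}-\phi(c)$ in \eqref{eq:def-etaK}), I would obtain
\begin{align*}
  (c-a)\cdot\hat{n} &\le \phi(c)-\phi(a)+C_K^h h_K^2 \le -\eta_K h_K+C_K^h h_K^2, \\
  (c-b)\cdot\hat{n} &\le \phi(c)-\phi(b)+C_K^h h_K^2 \le -\eta_K h_K+C_K^h h_K^2.
\end{align*}

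Next I would write $y=\lambda a+\mu b$ with $\lambda,\mu\ge 0$ and $\lambda+\mu=1$, so that $c-y=\lambda(c-a)+\mu(c-b)$; the convex combination of the two inequalities above then gives $(c-y)\cdot\hat{n}\le -\eta_K h_K+C_K^h h_K^2$. Dividing by $d(y,c)>0$ and noting (i) $d(y,c)\le h_K$, so that the (negative) term satisfies $-\eta_K h_K/d(y,c)\le -\eta_K$, and (ii) $d(y,c)$ is at least the altitude of $K$ from $c$, which in turn is at least $\rho_K=h_K/\sigma_K$ (the largest inscribed disc lies in the slab between the line through $a,b$ and the parallel line through $c$), so that the (nonnegative) term satisfies $C_K^h h_K^2/d(y,c)\le C_K^h\sigma_K h_K$, I conclude
\begin{align*}
  \hat{N}(\pi(x))\cdot\hat{U}_{yc}=\frac{(c-y)\cdot\hat{n}}{d(y,c)}\le -\eta_K+C_K^h\sigma_K h_K=\cos\beta_K,
\end{align*}
which is \eqref{eq:lemanglebd0-main}.

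I do not expect a genuine obstacle: the argument is a short combination of \eqref{eq:phi-estimate}, the definitions \eqref{eq:def-etaK}--\eqref{eq:def-betaK}, and the definition of $\sigma_K$. The only point needing care is getting the constants to line up in the last step — the Taylor error collected from the two vertex estimates must be no larger than $C_K^h h_K^2$, so that dividing by the worst case $d(y,c)=h_K/\sigma_K$ produces precisely the $C_K^h\sigma_K h_K$ term in $\cos\beta_K$; this is exactly why the factor $\sigma_K$ is placed in \eqref{eq:def-betaK}. The supporting geometric fact, that the distance from a vertex to any point of the opposite edge is at least $\rho_K$, is elementary.
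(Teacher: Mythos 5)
Your proof is correct and follows essentially the same route as the paper's: apply the Taylor-type estimate \eqref{eq:phi-estimate} at the vertices, combine with the definition of $\eta_K$ to bound $(c-a)\cdot\hat{n}$ and $(c-b)\cdot\hat{n}$, take the convex combination for $y$, and normalize using $\rho_K \leq d(y,c)\leq h_K$. The only difference is cosmetic (you subtract two applications of \eqref{eq:phi-estimate} where the paper combines one-sided bounds), and you add an explicit justification for $d(y,c)\geq\rho_K$ via the altitude from $c$, which the paper states without comment.
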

\begin{proof}
  Let $\hat{n}_x = \hat{N}(\pi(x))$. From Corollary
  \ref{cor:phi-estimate}, we have
  \begin{subequations}
    \label{eq:d12}
    \begin{align}
      \phi(i) &\leq \left(i-\pi(x)\right)\cdot \hat{n}_x +\frac{1}{2}C_K^hh_K^2  \quad \text{for}~i=a,b, \label{eq:d1} \\
      \text{and} 
      \quad \phi(c) &\geq \left(c-\pi(x)\right)\cdot \hat{n}_x-\frac{1}{2}C_K^hh_K^2. \label{eq:d2}
    \end{align}
  \end{subequations}
  By definition of $\eta_K$ in \eqref{eq:def-etaK}, we know
  \begin{align}
    \phi(i) - \phi(c) &\geq \eta_K h_K \quad \text{for}~i=a,b.
    \label{eq:d3}
  \end{align}
  Using \eqref{eq:d12} in \eqref{eq:d3}, we get
  \begin{align}
    \left(c-i\right)\cdot \hat{n}_x &\leq C_K^hh_K^2-\eta_Kh_K  \quad
    \text{for}~i=a,b. \label{eq:d4}
  \end{align}
  Since $y\in e_{ab}$, $y$ is a convex combination of $a$ and $b$,
  \eqref{eq:d4} implies that
  \begin{align}
    (c-y)\cdot \hat{n}_x &\leq C_K^hh_K^2-\eta_K h_K. \label{eq:d5}
  \end{align}
  Dividing \eqref{eq:d5} by $d(c,y)$ and noting that $\rho_K<d(c,y)\leq
  h_K$, we get
  \begin{align}
    \hat{U}_{yc}\cdot\hat{n}_x &\leq C_K^hh_K\frac{h_K}{\rho_K} - \eta_K\frac{h_K}{h_K}
    =  \sigma_KC_K^hh_K-\eta_K = \cos\beta_K,
  \end{align}
  which is the required inequality.
\end{proof}

\begin{proposition}
  Let $K=(a,b,c)\in {\cal P}_h$ have positive edge $e_{ab}$ and
  proximal vertex $a$. Then
  \begin{align}
    \hat{N}(\pi(a))\cdot \hat{U}_{ab} &\geq -\frac{1}{2}C_K^hh_K. \label{eq:anglebd1-main}
  \end{align}
  \label{lem:anglebd1}
\end{proposition}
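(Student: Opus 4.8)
The plan is to apply the second-order estimate \eqref{eq:phi-estimate} of Corollary \ref{cor:phi-estimate} at the pair of vertices $x=a$, $y=b$ of $K$, and then exploit the defining property of the proximal vertex, namely that $a$ is at least as close to $\Gamma$ as $b$, i.e. $\phi(a)\le\phi(b)$ (recall $\phi(a),\phi(b)\ge0$ on a positive edge, so these are the distances of $a,b$ to $\Gamma$).

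First I would set $\hat{n}_a:=\hat{N}(\pi(a))$ and recall from Theorem \ref{thm:nbd} that $a=\pi(a)+\phi(a)\,\hat{n}_a$, so that $(a-\pi(a))\cdot\hat{n}_a=\phi(a)$ and hence
\begin{align*}
  (b-\pi(a))\cdot\hat{n}_a &= (b-a)\cdot\hat{n}_a + \phi(a) = d(a,b)\,\big(\hat{U}_{ab}\cdot\hat{n}_a\big) + \phi(a).
\end{align*}
Next, \eqref{eq:phi-estimate} with $x=a$, $y=b$ gives $\phi(b)\le (b-\pi(a))\cdot\hat{n}_a + \tfrac12 C_K^h d(a,b)^2$; substituting the identity above and rearranging yields
\begin{align*}
  d(a,b)\,\big(\hat{U}_{ab}\cdot\hat{n}_a\big) &\ge \phi(b)-\phi(a)-\tfrac12 C_K^h d(a,b)^2.
\end{align*}

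Finally, since $a$ is the proximal vertex of $K$, we have $\phi(b)-\phi(a)\ge0$; dropping this nonnegative term and dividing by $d(a,b)>0$ gives $\hat{U}_{ab}\cdot\hat{n}_a \ge -\tfrac12 C_K^h d(a,b)\ge -\tfrac12 C_K^h h_K$, where the last step uses $d(a,b)\le h_K$. This is exactly \eqref{eq:anglebd1-main}.

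I do not anticipate a genuine obstacle here; the only points requiring a little care are choosing the correct side of the two-sided bound in Corollary \ref{cor:phi-estimate} (the upper bound for $\phi(b)$) and translating "proximal vertex" into the inequality $\phi(a)\le\phi(b)$ via Definition \ref{def:pos-cut}(iv).
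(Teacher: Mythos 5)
Your proof is correct and follows essentially the same route as the paper's: both combine the proximal-vertex inequality $\phi(a)\le\phi(b)$, the identity $a=\pi(a)+\phi(a)\hat{N}(\pi(a))$ from Theorem \ref{thm:nbd}, and the upper branch of \eqref{eq:phi-estimate} applied at $x=a$, $y=b$, then divide by $d(a,b)$ and bound by $h_K$.
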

\begin{proof}
  Since $a$ is the proximal vertex of $K$, $\phi(a)\leq
  \phi(b)$. Then, using Theorem \ref{thm:nbd}, we get 
  \begin{align}
    \phi(b) &\geq \phi(a) = \left(a-\pi(a)\right)\cdot
    \hat{N}(\pi(a)). \label{eq:e1}
  \end{align}
  From Corollary \ref{cor:phi-estimate}, we also have
  \begin{align}
    \phi(b) &\leq \left(b-\pi(a)\right)\cdot \hat{N}(\pi(a)) +
    \frac{1}{2}C_K^hd(a,b)^2. \label{eq:e2}
  \end{align}
  Comparing \eqref{eq:e1} and \eqref{eq:e2}, we get
  \begin{align}
    (b-a)\cdot\hat{N}(\pi(a)) &\geq -\frac{1}{2}C_K^hd(a,b)^2. \label{eq:e4}
  \end{align}
  Dividing \eqref{eq:e4} by $d(a,b)$ and using $d(a,b)\leq h_K$ yields
  \begin{align}
    \hat{U}_{ab}\cdot \hat{N}(\pi(a))  \geq -\frac{1}{2}C_K^hd(a,b) \geq -\frac{1}{2}C_K^hh_K, \label{eq:e5}
  \end{align}
  which is the required inequality.
\end{proof}

We can now prove Proposition \ref{prop:angle-normal}.\\
\noindent
\textit{Proof of Proposition \ref{prop:angle-normal}:}
  We first obtain the lower bound in \eqref{eq:angle-normal} by using
  the bound for $\hat{N}(\pi(a))\cdot\hat{U}_{ab}$ derived in
  Proposition \ref{lem:anglebd1}.  We have
  \begin{align}
    \hat{N}(\pi(x))\cdot\hat{U}_{ab}
    &= \hat{N}(\pi(a))\cdot \hat{U}_{ab} + \left(\hat{N}(\pi(x))-\hat{N}(\pi(a))\right)\cdot\hat{U}_{ab},  \notag \\
    &\geq -\frac{1}{2}C_K^hh_K
    -\left|\hat{N}(\pi(x))-\hat{N}(\pi(a))\right|,
    \qquad \left(\text{Proposition}~\ref{lem:anglebd1}\right)  \notag \\
    &= -\frac{1}{2}C_K^hh_K -
    \left|\nabla\phi(x)-\nabla\phi(a)\right|,
    \notag \\
    &\geq -\frac{1}{2}C_K^hh_K - C_K^hh_K. \qquad \qquad \qquad \qquad
    ~ ~ \left(\text{Corollary}~\ref{cor:phi-estimate}\right) \notag 
  \end{align}
  To derive the upper bound, we make use of the inequality
  \begin{align}
    \arccos(\hat{u}\cdot\hat{v}) 
    &\leq \arccos(\hat{u}\cdot\hat{w}) + \arccos(\hat{v}\cdot\hat{w}),
    \label{eq:trigid}
  \end{align}
  for any three unit vectors $\hat{u},\hat{v},\hat{w}$ in ${\mathbb
    R}^2$, with $\arccos\colon [-1,1]\to [0,\pi]$.  Setting $\hat{u}=\hat{N}(\pi(x))$, $\hat{v}=\hat{U}_{ac}$
  and $\hat{w}=\hat{U}_{ab}$ in \eqref{eq:trigid}, we get
  \begin{align}
    \arccos(\hat{N}(\pi(x))\cdot\hat{U}_{ab}) &\geq
    \arccos(\hat{N}(\pi(x))\cdot\hat{U}_{ac})
    -\arccos(\hat{U}_{ac}\cdot\hat{U}_{ab}). \label{eq:f2}
  \end{align}
  From Proposition \ref{prop:angle-ac}, we know
  $\hat{N}(\pi(x))\cdot\hat{U}_{ac}\leq \cos\beta_K$. Since
  $a$ is the proximal vertex in $K$, we have
  $\hat{U}_{ac}\cdot\hat{U}_{ab}=\cos\vartheta_K$. The upper bound in
  \eqref{eq:angle-normal} follows.

  Finally, to demonstrate that
  $\left|\hat{N}(\pi(x))\cdot\hat{U}_{ab}\right|<1$, it suffices to
  show that $\frac{3}{2}C_K^hh_K$ and $\cos(\beta_K-\vartheta_K)$ are
  both smaller than $1$. The latter follows from part (iv) of
  Proposition \ref{prop:well-def}. For the former, noting that
  $\sigma_K\geq \sqrt{3}$ in \eqref{eq:sigmaCh} yields $(3/2)C_K^hh_K
  \leq \sigma_KC_K^hh_K < \sin\vartheta_K/2 <1$. \qquad \endproof

  Part (ii) of Proposition \ref{prop:well-def} implies the lower bound
  $\phi\geq -h_K$ on the positive edge of $K\in {\cal P}_h$. This can
  be improved using the fact that $\phi\geq 0$ at each vertex in
  $\Gamma_h$.  The tighter bound computed below is used in
  \S\ref{sec:connected}.
\begin{corollary}[of Proposition \ref{lem:anglebd1}]
  Let $K=(a,b,c)\in {\cal P}_h$ have positive edge $e_{ab}$. Then
  \begin{align}
    \phi(x)\geq -2C_K^h\min\left\{d(a,x),d(b,x)\right\}d(a,b)
    \quad \forall x\in e_{ab}. \label{eq:phi-lower}
  \end{align}
  \label{cor:phi-lower}
\end{corollary}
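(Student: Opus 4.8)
The plan is to derive everything from the second-order Taylor bound \eqref{eq:phi-estimate} in Corollary \ref{cor:phi-estimate}, applied at \emph{each} endpoint of the positive edge, and then to combine the two one-sided estimates in a convex combination that annihilates the first-order (angle) terms. The only structural facts needed are that $\phi(a),\phi(b)\ge 0$ because $e_{ab}$ is a positive edge, and that $\hat{U}_{ba}=-\hat{U}_{ab}$; note in particular that which of $a,b$ is the proximal vertex plays no role.

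First I would dispose of the trivial case $x\in\{a,b\}$: there $\phi(x)\ge 0$ while the right-hand side of \eqref{eq:phi-lower} vanishes. So fix $x\in e_{ab}$ with $d(a,x),d(b,x)>0$ and write $\hat{n}:=\hat{N}(\pi(x))$. Applying \eqref{eq:phi-estimate} with the free point taken to be $a$, respectively $b$, and substituting $x=\pi(x)+\phi(x)\,\hat{n}$ from Theorem \ref{thm:nbd} together with $a-x=-d(a,x)\,\hat{U}_{ab}$ and $b-x=-d(b,x)\,\hat{U}_{ba}$, one obtains
\begin{align*}
  \phi(x) &\ge \phi(a) + d(a,x)\,\hat{U}_{ab}\cdot\hat{n} - \tfrac12 C_K^h\,d(a,x)^2, \\
  \phi(x) &\ge \phi(b) + d(b,x)\,\hat{U}_{ba}\cdot\hat{n} - \tfrac12 C_K^h\,d(b,x)^2.
\end{align*}
Now I would multiply the first inequality by $d(b,x)/d(a,b)$ and the second by $d(a,x)/d(a,b)$ — weights that sum to $1$ since $x\in e_{ab}$ gives $d(a,x)+d(b,x)=d(a,b)$ — and add. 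The coefficients multiplying $\hat{n}$ are both equal to $d(a,x)\,d(b,x)/d(a,b)$, so by $\hat{U}_{ba}=-\hat{U}_{ab}$ they cancel exactly; the $\phi(a),\phi(b)$ terms contribute something $\ge 0$; and the quadratic terms combine into $\tfrac12 C_K^h\,d(a,x)\,d(b,x)$. Hence $\phi(x)\ge -\tfrac12 C_K^h\,d(a,x)\,d(b,x)$. Since $d(a,x)\,d(b,x)\le \min\{d(a,x),d(b,x)\}\cdot\max\{d(a,x),d(b,x)\}\le \min\{d(a,x),d(b,x)\}\,d(a,b)$, the estimate \eqref{eq:phi-lower} follows, in fact with the constant $\tfrac12$ in place of $2$.

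The only point that requires a bit of foresight — and thus the main obstacle — is the decision to run the estimate from \emph{both} endpoints and average, rather than trying to propagate a single estimate anchored at the proximal vertex $a$ across all of $e_{ab}$: such a one-sided estimate controls $\phi(x)$ only through $d(a,x)$, which is the wrong length scale once $x$ lies near $b$, and near the non-proximal vertex there is no usable one-sided lower bound for $\hat{U}_{ab}\cdot\hat{N}(\pi(x))$ of the kind that Proposition \ref{lem:anglebd1} supplies at the proximal vertex. Forming the convex combination removes the angle terms identically, so the argument never needs such a bound; Proposition \ref{lem:anglebd1} enters only through the shared second-order estimate \eqref{eq:phi-estimate}. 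Everything else is routine bookkeeping with $d(a,x)+d(b,x)=d(a,b)$ and $d(a,x),d(b,x)\le d(a,b)$.
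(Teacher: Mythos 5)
Your proof is correct, and it is genuinely different from the paper's. The paper labels \eqref{eq:phi-lower} a corollary of Proposition~\ref{lem:anglebd1}: it anchors the Taylor estimate \eqref{eq:phi-estimate} at one endpoint, uses Proposition~\ref{lem:anglebd1} (after a case split on which of $a,b$ is proximal, and a propagation step via \eqref{eq:gradphi-estimate}) to produce the one-sided bound $\hat{U}_{ab}\cdot\hat{N}(\pi(a))\ge -\tfrac32 C_K^hd(a,b)$, and then bounds $\phi(x)$ from the single anchor $a$ (and by symmetry from $b$), picking up the constant $2$. You instead anchor the Taylor estimate at the interior point $x$, apply it to both endpoints, and form the convex combination with weights $d(b,x)/d(a,b)$ and $d(a,x)/d(a,b)$. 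Since $\hat U_{ab}+\hat U_{ba}=0$, the first-order terms cancel exactly, so no lower bound on $\hat N(\pi(\cdot))\cdot\hat U_{ab}$ is ever needed — Proposition~\ref{lem:anglebd1} does not enter at all, only Corollary~\ref{cor:phi-estimate} (itself a consequence of Proposition~\ref{prop:gradpi}). This buys a cleaner argument with no proximal-vertex case analysis and a sharper constant: you get $\phi(x)\ge -\tfrac12 C_K^h\,d(a,x)\,d(b,x)\ge -\tfrac12 C_K^h\min\{d(a,x),d(b,x)\}\,d(a,b)$, a factor of $4$ better than \eqref{eq:phi-lower}, which in turn would sharpen \eqref{eq:dist-estimate} to $\phi\ge -\tfrac18 C_K^hh_K^2$ on the positive edge. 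One small note on the write-up: you say ``Proposition~\ref{lem:anglebd1} enters only through the shared second-order estimate \eqref{eq:phi-estimate},'' but \eqref{eq:phi-estimate} is independent of Proposition~\ref{lem:anglebd1}; your argument simply does not use that proposition.
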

\begin{proof}
 If $a$ is the proximal vertex of $K$, then \eqref{eq:e5} of
 the above proposition shows that 
  \begin{align}
    \hat{U}_{ab}\cdot\hat{N}(\pi(a))\geq- \frac{1}{2}C_K^hd(a,b). \label{eq:lbd-1}
  \end{align}
  Otherwise, $b$ is the proximal vertex of $K$ and we have 
  \begin{align}
  \hat{U}_{ab}\cdot \hat{N}(\pi(a)) 
  &=\hat{U}_{ab}\cdot
  \hat{N}(\pi(b))+\hat{U}_{ab}\cdot\left(\hat{N}(\pi(a))-\hat{N}(\pi(b))\right),
  \notag \\
  &\geq \hat{U}_{ab}\cdot
  \hat{N}(\pi(b)))-|\nabla\phi(a)-\nabla\phi(a)|, \notag \\
  &\geq \hat{U}_{ab}\cdot \hat{N}(\pi(b))-C_K^hd(a,b) \qquad
  \left(\text{from Corollary \ref{cor:phi-estimate}}\right), \notag \\
  &\geq -\frac{3}{2}C_K^hd(a,b). \qquad \qquad \qquad ~\quad
  \left(\text{using \eqref{eq:e5}}\right) \label{eq:lbd-2}
\end{align}
From \eqref{eq:lbd-1} and \eqref{eq:lbd-2}, we conclude that
\begin{align}
  \hat{U}_{ab}\cdot\hat{N}(\pi(a))\geq -\frac{3}{2}C_K^hd(a,b).\label{eq:lbd-3}
\end{align}
Next, using Corollary \ref{cor:phi-estimate}, we have
\begin{align}
  \phi(x)
  &\geq (x-\pi(a))\cdot\hat{N}(\pi(a)) -\frac{1}{2}C_K^hd(a,x)^2, \notag \\
  &= \phi(a) + (x-a)\cdot\hat{N}(\pi(a)) -\frac{1}{2}C_K^hd(a,x)^2,
  \quad \left(\pi(a)=a-\phi(a)\hat{N}(\pi(a))\right)
  \notag \\
  &\geq -d(a,x)\hat{U}_{ab}\cdot \hat{N}(\pi(a))
  -\frac{1}{2}C_K^hd(a,x)^2, \qquad ~\left(\phi(a)\geq 0\right) \notag\\
  &\geq -\frac{3}{2}C_K^hd(a,x)d(a,b)-\frac{1}{2}C_K^hd(a,x)^2, \qquad
  \quad ~  \left(\text{using}~\eqref{eq:lbd-3}\right)\notag\\
  &\geq -2C_K^hd(a,x)d(a,b). \qquad \qquad \qquad \qquad \qquad ~\left(\text{using}~d(a,x)<d(a,b)\right)
  \label{eq:lbd-4}
  \end{align}
  Of course, we can interchange the roles of $a$ and $b$ in the above
  calculations. The required lower bound for $\phi(x)$ follows.
\end{proof}

That the lower bound computed above is better than the trivial one
$\phi\geq -h_K$ is easily demonstrated.  Noting that $\sigma_K\geq
\sqrt{3}$ and $\vartheta_K<90^\circ$ (assumption
\eqref{eq:cond-angle}) in \eqref{eq:sigmaCh} yields
\begin{align}
  C_K^h h_K<\frac{1}{\sqrt{3}}\sin\frac{\vartheta_K}{2} \leq \frac{1}{\sqrt{6}}.
\end{align}
The estimate in \eqref{eq:phi-lower} then implies
\begin{align}
  \phi &\geq -C_K^hh_K^2 > -\frac{h_K}{\sqrt{6}}. \label{eq:phi-lower-hk}
\end{align}

\section{About the set of positive edges}
\label{sec:topology}
We prove Lemmas \ref{lem:uniq-id} and \ref{cor:two-vertex} here. We
proceed in simple steps, starting by examining the orientation of
positive edges with respect to the local normal and tangent to
$\Gamma$. From these calculations, we conclude that each edge in
$\Gamma_h$ is a positive edge of just one positively cut triangle
(Lemma \ref{lem:uniq-id}). This result in turn helps us show that at
least two positive edges intersect at each vertex in $\Gamma_h$ (Lemma
\ref{lem:closed}), a useful step in proving Lemma
\ref{cor:two-vertex}. In the following, $\text{sgn}:{\mathbb
  R}\rightarrow \{-1,0,1\}$ is the function defined as
$\text{sgn}(x)=x/|x|$ if $x\neq 0$ and $\text{sgn}(x)=0$ if $x=0$.

\begin{proposition}
  Let $(a,b,c)\in{\cal P}_h$ have positive edge $e_{ab}$ and proximal
  vertex $a$. Then
  \begin{subequations}
    \label{eq:sameside}
    \begin{align}
      {\rm sgn}(\hat{T}(\pi(a))\cdot\hat{U}_{ab}) &=
      {\rm sgn}(\hat{T}(\pi(a))\cdot\hat{U}_{ac})\neq0, \label{eq:sameside-a} \\
      \hat{N}(\pi(a))\cdot\hat{U}_{ac} &
      <\hat{N}(\pi(a))\cdot\hat{U}_{ab} . \label{eq:sameside-b}
    \end{align}
  \end{subequations}    
  \label{lem:sameside}
\end{proposition}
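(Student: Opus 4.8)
\emph{Proof strategy.} I would work in the orthonormal frame $\hat n:=\hat N(\pi(a))$, $\hat t:=\hat T(\pi(a))$, and for $i\in\{b,c\}$ put $\alpha_i:=\arccos(\hat n\cdot\hat U_{ai})\in[0,\pi]$ and $\varepsilon_i:=\text{sgn}(\hat t\cdot\hat U_{ai})$, so that $\hat U_{ai}=\cos\alpha_i\,\hat n+\varepsilon_i\sin\alpha_i\,\hat t$. The facts I would invoke are: $\hat n\cdot\hat U_{ab}\ge-\tfrac12 C_K^hh_K$ and $|\hat n\cdot\hat U_{ab}|<1$ (Propositions~\ref{lem:anglebd1} and~\ref{prop:angle-normal}), so $\alpha_b\in(0,\pi)$ and $\varepsilon_b\neq0$; $\hat n\cdot\hat U_{ac}\le\cos\beta_K$, from Proposition~\ref{prop:angle-ac} with $x=y=a$, so $\alpha_c\ge\beta_K$; $\hat U_{ab}\cdot\hat U_{ac}=\cos\vartheta_K$ because $\vartheta_K$ is the interior angle of $K$ at $a$; and $\cos\beta_K\le\sigma_KC_K^hh_K<\min\{\cos\vartheta_K,\sin(\vartheta_K/2)\}$, together with $\beta_K>\vartheta_K$ (Proposition~\ref{prop:well-def}(iv)) and $\sigma_K\ge\sqrt3>\tfrac12$, from \eqref{eq:sigmaCh}--\eqref{eq:cond-angle}. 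In particular $\alpha_c\ge\beta_K>\vartheta_K$ and $\tfrac12C_K^hh_K<\sigma_KC_K^hh_K<\cos\vartheta_K$. Both claims will then be reduced to elementary inequalities among $\alpha_b,\alpha_c,\vartheta_K$.

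First I would check $\varepsilon_c\neq0$, i.e.\ $\hat t\cdot\hat U_{ac}\neq0$: otherwise $\hat U_{ac}=\pm\hat n$, and the $+$ sign gives $\hat n\cdot\hat U_{ac}=1>\cos\beta_K$, impossible, while the $-$ sign gives $\hat n\cdot\hat U_{ab}=-\hat U_{ab}\cdot\hat U_{ac}=-\cos\vartheta_K<-\tfrac12C_K^hh_K$, contradicting Proposition~\ref{lem:anglebd1}. Hence $\alpha_c\in(0,\pi)$, and expanding $\hat U_{ab}\cdot\hat U_{ac}$ in the frame gives $\cos\vartheta_K=\cos(\alpha_b-\varepsilon_b\varepsilon_c\,\alpha_c)$. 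To prove \eqref{eq:sameside-a}, suppose $\varepsilon_b\varepsilon_c=-1$. Then $\cos\vartheta_K=\cos(\alpha_b+\alpha_c)$ with $\alpha_b+\alpha_c\in(0,2\pi)$, so either $\alpha_b+\alpha_c=\vartheta_K$ — impossible, since then $\alpha_c<\vartheta_K<\beta_K\le\alpha_c$ — or $\alpha_b+\alpha_c=2\pi-\vartheta_K$, which forces $\alpha_b>\pi-\vartheta_K$ and hence $\hat n\cdot\hat U_{ab}=\cos\alpha_b<\cos(\pi-\vartheta_K)=-\cos\vartheta_K<-\tfrac12C_K^hh_K$, again contradicting Proposition~\ref{lem:anglebd1}. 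Therefore $\varepsilon_b\varepsilon_c=+1$, which is exactly \eqref{eq:sameside-a}.

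For \eqref{eq:sameside-b}, with $\varepsilon_b\varepsilon_c=+1$ we have $\cos\vartheta_K=\cos(\alpha_b-\alpha_c)$, hence $|\alpha_b-\alpha_c|=\vartheta_K$; since $\cos$ is strictly decreasing on $(0,\pi)$, the inequality $\hat n\cdot\hat U_{ac}<\hat n\cdot\hat U_{ab}$ is equivalent to $\alpha_c>\alpha_b$, i.e.\ to $\alpha_c=\alpha_b+\vartheta_K$. Assume for contradiction $\alpha_b=\alpha_c+\vartheta_K$. Then $\beta_K+\vartheta_K\le\alpha_c+\vartheta_K=\alpha_b\le\arccos(-\tfrac12C_K^hh_K)=\tfrac{\pi}{2}+\arcsin(\tfrac12C_K^hh_K)$, while $\cos\beta_K\le\sigma_KC_K^hh_K$ gives $\beta_K\ge\arccos(\sigma_KC_K^hh_K)=\tfrac{\pi}{2}-\arcsin(\sigma_KC_K^hh_K)$; combining these yields $\vartheta_K\le\arcsin(\sigma_KC_K^hh_K)+\arcsin(\tfrac12C_K^hh_K)$. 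But $\sigma_KC_K^hh_K<\sin(\vartheta_K/2)$ and likewise $\tfrac12C_K^hh_K<\sigma_KC_K^hh_K<\sin(\vartheta_K/2)$, so both $\arcsin$ terms are strictly less than $\vartheta_K/2$, and we obtain $\vartheta_K<\vartheta_K$. This contradiction forces $\alpha_c=\alpha_b+\vartheta_K$, proving \eqref{eq:sameside-b}.

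The one genuinely delicate step is the argument for \eqref{eq:sameside-b}: it is here that both parts of \eqref{eq:sigmaCh} — the $\cos\vartheta_K$ margin (used already in (a)) and the $\sin(\vartheta_K/2)$ margin — together with the universal bound $\sigma_K\ge\sqrt3$ are consumed, and one must keep careful track of which side of the normal line $\mathbb R\hat n$ the directions $\hat U_{ab}$ and $\hat U_{ac}$ lie on (the role of $\varepsilon_b,\varepsilon_c$) to be sure the angle arithmetic closes. The remaining verifications are routine consequences of the cited propositions.
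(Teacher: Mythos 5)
Your proposal is correct and follows essentially the same path as the paper's own proof: work in the frame $\{\hat n,\hat t\}$ at $\pi(a)$, express $\hat U_{ab}\cdot\hat U_{ac}=\cos\vartheta_K$ in terms of the angles to $\hat n$, combine the one-sided bounds on $\hat n\cdot\hat U_{ab}$ (Proposition~\ref{lem:anglebd1}) and $\hat n\cdot\hat U_{ac}$ (Proposition~\ref{prop:angle-ac}) with \eqref{eq:sigmaCh} to rule out the two ``bad'' angular configurations. The only differences are cosmetic: you keep $\alpha_i\in[0,\pi]$ together with a separate sign $\varepsilon_i$ whereas the paper uses a single clockwise angle in $[0^\circ,360^\circ)$, and in part~(b) you use the sharper $\arcsin(\sigma_KC_K^hh_K)+\arcsin(\tfrac12C_K^hh_K)$ rather than the paper's $2\arcsin(\sigma_KC_K^hh_K)$ — both suffice under \eqref{eq:sigmaCh} with $\sigma_K\ge\sqrt3$.
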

\begin{proof}
  For convenience, let $\hat{t}=\hat{T}(\pi(a))$ and
  $\hat{n}=\hat{N}(\pi(a))$. Let $\alpha_b,\alpha_c\in
  [0^\circ,360^\circ)$ denote the angles from $\hat{n}$ to
  $\hat{U}_{ab}$ and $\hat{U}_{ac}$ respectively measured in the
  clockwise sense so that
  \begin{align}
    \hat{U}_{ai} &= \cos\alpha_i\,\hat{n} + \sin\alpha_i\,\hat{t}
    \quad \text{for}~i=b,c. \label{eq:i1}
  \end{align}
  From \eqref{eq:i1} and the assumption that $a$ is the proximal
  vertex in $K$, note that
  \begin{align}
    \cos\vartheta_K = \hat{U}_{ab}\cdot\hat{U}_{ac} =
    \cos\alpha_b\cos\alpha_c+\sin\alpha_b\sin\alpha_c
    =\cos(\alpha_c-\alpha_b). \label{eq:i2}
  \end{align}

  First we prove \eqref{eq:sameside-a}. Since Proposition
  \ref{prop:angle-normal} shows $\hat{t}\cdot\hat{U}_{ab} \neq 0$,
  without loss of generality assume that $\hat{t}\cdot \hat{U}_{ab}>0
  \Rightarrow \alpha_b\in(0^\circ,180^\circ)$. The upper bound can be
  improved by invoking Proposition \ref{lem:anglebd1},
  \eqref{eq:sigmaCh} and $\sigma_K\geq \sqrt{3}$:
  \begin{align}
    \cos \alpha_b= \hat{n}\cdot\hat{U}_{ab} \geq -\frac{1}{2}C_K^hh_K \geq
    -\sigma_KC_K^hh_K > -\cos\vartheta_K \Rightarrow \alpha_b <
    180^\circ - \vartheta_K.
    \label{eq:i3}
  \end{align}
  Suppose then that $\hat{t}\cdot\hat{U}_{ac}\leq 0$, i.e.,
  $\alpha_c\geq 180^\circ$. From Propositions \ref{prop:well-def} and
  \ref{prop:angle-ac}, we have $\alpha_c\leq
  360^\circ-\beta_K<360^\circ-\vartheta_K$. In conjunction with
  \eqref{eq:i3}, this shows $\vartheta_K \le
  (\alpha_c-180^\circ)+\vartheta_K< \alpha_c-\alpha_b <
  360^\circ-\vartheta_K$ which clearly contradicts
  \eqref{eq:i2}. Therefore $\hat{t}\cdot\hat{U}_{ab}>0\Rightarrow
  \hat{t}\cdot\hat{U}_{ac}>0$ as well. The case
  $\hat{t}\cdot\hat{U}_{ab}<0$ is argued similarly.

  Next we show \eqref{eq:sameside-b}.  Following
  \eqref{eq:sameside-a}, without loss of generality assume that
  $\hat{t}\cdot\hat{U}_{ab}$ and $\hat{t}\cdot\hat{U}_{ac}$ are both
  positive. Consequently, $\alpha_b,\alpha_c\in
  (0^\circ,180^\circ)$. We proceed by contradiction. Suppose that
  $\hat{n}\cdot\hat{U}_{ab}\leq \hat{n}\cdot\hat{U}_{ac} \Rightarrow
  \alpha_c\leq \alpha_b$. Then, noting that
  $\cos\beta_K<\sigma_KC_K^hh_K$ (from \eqref{eq:def-betaK} and
  Proposition \ref{prop:well-def} part (iii)), $\cos\alpha_c\leq
  \cos\beta_K$ (Proposition \ref{prop:angle-ac}) and $\cos\alpha_b\geq
  -\sigma_KC_K^hh_K$ (Proposition \ref{lem:anglebd1}, $\sigma_K\ge
  \sqrt 3$), we get
  \begin{align}
    90^\circ-\arcsin(\sigma_KC_K^hh_K) < \beta_K \leq \alpha_c\leq
    \alpha_b \leq 90^\circ + \arcsin(\sigma_KC_K^hh_K), \label{eq:i4}
  \end{align}
  where $\arcsin\colon[-1,1]\to[-\pi/2,\pi/2]$. Together with
  \eqref{eq:sigmaCh}, this implies that
  $\alpha_b-\alpha_c <2\arcsin(\sigma_KC_K^hh_K)<2\times\vartheta_K/2 = \vartheta_K,$
  which contradicts \eqref{eq:i2}, and hence $\hat{n}\cdot\hat{U}_{ab}>
  \hat{n}\cdot\hat{U}_{ac}$. Again, the case in which both terms
  in \eqref{eq:sameside-a} are negative is handled similarly.
\end{proof}

\begin{proposition}
  Let $(a,b,c)\in{\cal P}_h$ have positive edge $e_{ab}$. Then 
  \begin{align}
    \text{\rm sgn}(\hat{T}(\pi(x))\cdot\hat{U}_{ab}) &=
    \text{\rm sgn}(\hat{U}_{ca}\cdot\hat{U}_{ab}^\perp)={\rm sgn}(\hat
    U_{cb} \cdot \hat U_{ab}^\perp)\quad \forall x\in
    e_{ab}.
    \label{eq:j-main}
  \end{align}
  \label{lem:top}
\end{proposition}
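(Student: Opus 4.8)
The plan is to reduce \eqref{eq:j-main} to the single point $x=a$ and then read it off from the angle estimates already established in Proposition~\ref{lem:sameside}. I begin with two easy reductions. First, relabelling $a\leftrightarrow b$ sends $\hat U_{ab}\mapsto-\hat U_{ab}$ and $\hat U_{ab}^\perp\mapsto-\hat U_{ab}^\perp$ and interchanges $\hat U_{ca}$ with $\hat U_{cb}$, so all three quantities appearing in \eqref{eq:j-main} flip sign simultaneously and the asserted equalities are invariant under this relabelling; hence I may assume without loss of generality that $a$ is the proximal vertex of $K$. Second, the equality $\text{sgn}(\hat U_{ca}\cdot\hat U_{ab}^\perp)=\text{sgn}(\hat U_{cb}\cdot\hat U_{ab}^\perp)$ is elementary: since $(a-c)-(b-c)=a-b$ is parallel to $\hat U_{ab}$, hence orthogonal to $\hat U_{ab}^\perp$, we get $(a-c)\cdot\hat U_{ab}^\perp=(b-c)\cdot\hat U_{ab}^\perp$, and dividing by the positive lengths $d(c,a)$ and $d(c,b)$ produces two numbers of the same sign, both nonzero because $K$ is a nondegenerate triangle and so $c$ is not collinear with $a$ and $b$.

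It remains to prove $\text{sgn}(\hat T(\pi(x))\cdot\hat U_{ab})=\text{sgn}(\hat U_{ca}\cdot\hat U_{ab}^\perp)$ for every $x\in e_{ab}$. I would first note that $x\mapsto\hat T(\pi(x))\cdot\hat U_{ab}$ is continuous on the connected set $e_{ab}$, since $\pi$ is $C^1$ on $e_{ab}\subset B(\Gamma,r_n)$ by Theorem~\ref{thm:nbd} and $\hat T$ is continuous on $\Gamma$, and that it never vanishes there by Proposition~\ref{prop:angle-normal}. Therefore its sign is constant on $e_{ab}$, and it suffices to evaluate it at the proximal vertex $x=a$.

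At $x=a$ I would set up the orthonormal right-handed basis $\{\hat t,\hat n\}:=\{\hat T(\pi(a)),\hat N(\pi(a))\}$, exactly as in the proof of Proposition~\ref{lem:sameside}, and write $\hat U_{ai}=\cos\alpha_i\,\hat t+\sin\alpha_i\,\hat n$ for $i=b,c$. Then $\hat U_{ab}^\perp=-\sin\alpha_b\,\hat t+\cos\alpha_b\,\hat n$, so $\hat U_{ca}\cdot\hat U_{ab}^\perp=-\hat U_{ac}\cdot\hat U_{ab}^\perp=\sin(\alpha_b-\alpha_c)$, whereas $\hat T(\pi(a))\cdot\hat U_{ab}=\cos\alpha_b$. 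Proposition~\ref{lem:sameside} furnishes $\text{sgn}(\cos\alpha_b)=\text{sgn}(\cos\alpha_c)\neq0$ and $\sin\alpha_c<\sin\alpha_b$. If both cosines are positive, choose $\alpha_b,\alpha_c\in(-90^\circ,90^\circ)$, where $\sin$ is increasing; then $\alpha_c<\alpha_b$ and $\alpha_b-\alpha_c\in(0^\circ,180^\circ)$, so $\sin(\alpha_b-\alpha_c)>0$. If both cosines are negative, choose $\alpha_b,\alpha_c\in(90^\circ,270^\circ)$, where $\sin$ is decreasing; then $\alpha_c>\alpha_b$ and $\alpha_b-\alpha_c\in(-180^\circ,0^\circ)$, so $\sin(\alpha_b-\alpha_c)<0$. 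Either way $\text{sgn}(\hat T(\pi(a))\cdot\hat U_{ab})=\text{sgn}(\hat U_{ca}\cdot\hat U_{ab}^\perp)$, which together with the two reductions above proves \eqref{eq:j-main}.

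I do not anticipate a genuine obstacle: the geometric substance is already contained in Propositions~\ref{lem:sameside} and~\ref{prop:angle-normal}, and the present statement merely recasts it in terms of the triangle's own geometry, with no dependence on the curve beyond the tangent at $\pi(x)$. The only item demanding a little care is the bookkeeping of the orientation of $\hat U_{ab}^\perp$ and the verification that $\alpha_b$ and $\alpha_c$ lie on a common half-circle on which $\sin$ is monotone --- which is precisely what $\text{sgn}(\cos\alpha_b)=\text{sgn}(\cos\alpha_c)$ guarantees.
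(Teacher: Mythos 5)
Your proof is correct and follows essentially the same route as the paper: establish the equality between the two cross-product terms by taking $\hat U_{ab}^\perp$-components of the identity $a-b = (a-c)-(b-c)$, reduce to the proximal vertex by a sign-symmetry under relabelling $a\leftrightarrow b$, reduce to $x=a$ by the continuity and non-vanishing of $\hat T(\pi(\cdot))\cdot\hat U_{ab}$ on $e_{ab}$ (Proposition~\ref{prop:angle-normal}), and then invoke the angle-ordering of Proposition~\ref{lem:sameside}. The only cosmetic difference is that the paper absorbs the common sign $\iota=\text{sgn}(\hat t\cdot\hat U_{ab})=\text{sgn}(\hat t\cdot\hat U_{ac})$ directly into its parameterization $\hat U_{ai}=\cos\alpha_i\,\hat n+\iota\sin\alpha_i\,\hat t$ with $\alpha_i\in(0^\circ,180^\circ)$ and carries it through a single computation, whereas you split into the two cases $\iota=\pm1$ and choose the half-circle accordingly; both close the argument in the same way via $\sin(\alpha_c-\alpha_b)\neq 0$. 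Your explicit justification of the relabelling step (all three terms in \eqref{eq:j-main} flip sign simultaneously) is in fact a bit more careful than the paper's, which asserts the WLOG without spelling it out.
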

\begin{proof}
  Notice first that since
  \begin{equation}
    \label{eq:1}
    d(c,a) \hat U_{ca} = d(c,b)\hat U_{cb} + d(b,a) \hat U_{ba},
  \end{equation}
  it follows that ${\rm sgn}(\hat U_{ca}\cdot \hat U_{ab}^\perp) =
  {\rm sgn}(\hat U_{cb}\cdot \hat U_{ab}^\perp)$, after taking the
  inner product on both sides with $\hat U_{ab}^\perp$.  Without loss
  of generality then, assume that the proximal vertex in triangle
  $(a,b,c)$ is the vertex $a$.  For convenience, let $\alpha_i =
  \arccos(\hat{N}(\pi(a))\cdot\hat{U}_{ai})$ for $i=b,c$. From
  Proposition \ref{lem:sameside}, we know $\text{\rm
    sgn}(\hat{U}_{ab}\cdot\hat{T}(\pi(a))) = \text{\rm
    sgn}(\hat{T}(\pi(a))\cdot\hat{U}_{ac}) := \iota$.  From the
  definition of $\alpha_b,\alpha_c$ and $\iota$, we have
  \begin{subequations}
    \label{eq:j1}
    \begin{align}
      \hat{U}_{ai} &= \cos\alpha_i~\hat{n}+\iota\sin\alpha_i~\hat{t} \quad \text{for}~i=b,c, \label{eq:j1a} \\
      \hat{U}_{ab}^\perp&= \iota\sin\alpha_b~\hat{n}-\cos\alpha_b~\hat{t}, \label{eq:j1b}
    \end{align}
  \end{subequations}
  where we have again set $\hat t = \hat T(\pi(a))$ and $\hat n=\hat
  N(\pi(a))$. 
  Noting that $0^\circ<\alpha_b<180^\circ$ from Proposition
  \ref{prop:angle-normal} and $\alpha_b<\alpha_c$ from Proposition
  \ref{lem:sameside}, we get
  $0^\circ<\alpha_c-\alpha_b<180^\circ$. Then, using \eqref{eq:j1}, we
  have the following calculation:
  \begin{align}
    \text{sgn}(\hat{U}_{ca}\cdot\hat{U}_{ab}^\perp) 
    = \text{sgn}(\iota\sin(\alpha_c-\alpha_b))
    = \iota
    = \text{sgn}(\hat{t}\cdot\hat{U}_{ab}), \label{eq:j2}
  \end{align}
  which proves \eqref{eq:j-main} for $x=a$.  This in fact implies
  \eqref{eq:j-main} for every $x\in e_{ab}$. For if we suppose
  otherwise, then by continuity of the mapping
  $\hat{U}_{ab}\cdot\left(\hat{T}\circ\pi\right):e_{ab}\rightarrow
  {\mathbb R}$, there would exist $y\in e_{ab}$ such that
  $\hat{U}_{ab}\cdot\hat{T}(\pi(y))=0$, contradicting Proposition
  \ref{prop:angle-normal}.
\end{proof}

\begin{proof}[Proof of Lemma \ref{lem:uniq-id}]
  Let $e_{ab}$ be a positive edge in $\Gamma_h$.  By definition, we
  can find $K=(a,b,c)\in{\cal P}_h$ for which $e_{ab}$ is a positive
  edge.  Suppose that there exists $\tilde{K}=(a,b,d)\in {\cal P}_h$
  different from $K$ that also has positive edge $e_{ab}$. Then,
  applying Proposition \ref{lem:top} to triangles $K$ and $\tilde{K}$,
  we get
  \begin{align}
    \text{sgn}(\hat{U}_{ab}^\perp\cdot\hat{U}_{ca}) &=
    \text{sgn}(\hat{U}_{ab}^\perp\cdot\hat{U}_{da}),
    \label{eq:k1}
  \end{align}
  because both equal $\text{sgn}(\hat{U}_{ab}\cdot\hat{T}(\pi(a)))$.
  But \eqref{eq:k1} implies that $K\cap\tilde{K}\neq\emptyset$. This
  is a contradiction since $K$ and $\tilde{K}$ are non-overlapping
  open sets.  
\end{proof}

\begin{proposition}
  Let $K_\pm=(a,b,c_\pm)\in {\cal T}_h$ and $K_-\in {\cal P}_h$ have
  positive edge $e_{ab}$. If $x\in \textbf{ri}\,(e_{ab})$, then
  \begin{align}
    y\in \{\pi(x)+\lambda\,\hat{N}(\pi(x))\,:\,\lambda\in {\mathbb
      R}\}\cap K_\pm \, \Rightarrow \,
    \hat{U}_{xy}\cdot\hat{N}(\pi(x))=\pm 1.
  \end{align}
  \label{prop:triside-b}
\end{proposition}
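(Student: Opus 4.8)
The plan is to reduce the claim to a single sign computation---on which side of the line through $e_{ab}$ the point $y$ lies---and then read that sign off from Proposition~\ref{lem:top}. No curvature estimates should be needed; the only ingredients are Theorem~\ref{thm:nbd}, Proposition~\ref{prop:angle-normal} and Proposition~\ref{lem:top}.

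First I would fix the representations of the points involved. Since $x\in\textbf{ri}\,(e_{ab})\subset\Gamma_h\subset B(\Gamma,r_n)$ (using part (ii) of Proposition~\ref{prop:well-def} and \eqref{eq:h-global}), Theorem~\ref{thm:nbd} gives $x=\pi(x)+\phi(x)\,\hat N(\pi(x))$, so any $y=\pi(x)+\lambda\,\hat N(\pi(x))$ on the normal line satisfies $y-x=(\lambda-\phi(x))\,\hat N(\pi(x))$. Because $e_{ab}$ is an edge of the open triangle $K_\pm$, we have $x\notin K_\pm$; hence for $y\in K_\pm$ we get $y\ne x$, so $\lambda\ne\phi(x)$, and therefore $\hat U_{xy}\cdot\hat N(\pi(x))=\text{sgn}(\lambda-\phi(x))$. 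It thus suffices to show that $\text{sgn}(\lambda-\phi(x))=+1$ when $y\in K_+$ and $=-1$ when $y\in K_-$.

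Next I would locate $y$ by testing it against $\hat U_{ab}^\perp$. Since $x\in e_{ab}$, $(x-a)\cdot\hat U_{ab}^\perp=0$, so $(y-a)\cdot\hat U_{ab}^\perp=(\lambda-\phi(x))\bigl(\hat N(\pi(x))\cdot\hat U_{ab}^\perp\bigr)$. The one slightly subtle ingredient is the orientation identity $\hat N(\pi(x))\cdot\hat U_{ab}^\perp=\hat T(\pi(x))\cdot\hat U_{ab}=:\tau$, which holds because $\{\hat T(\pi(x)),\hat N(\pi(x))\}$ and $\{\hat U_{ab},\hat U_{ab}^\perp\}$ are both right-handed orthonormal bases of ${\mathbb R}^2$, hence related by a rotation, and a rotation preserves this inner product; by Proposition~\ref{prop:angle-normal}, $\tau\ne0$. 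Thus $\text{sgn}\bigl((y-a)\cdot\hat U_{ab}^\perp\bigr)=\text{sgn}(\lambda-\phi(x))\,\text{sgn}(\tau)$.

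Finally I would identify on which side of the line through $e_{ab}$ the vertices $c_\pm$ sit. Since $y$ lies in the open triangle $K_\pm$, it is strictly on the $c_\pm$ side of that line, i.e. $\text{sgn}\bigl((y-a)\cdot\hat U_{ab}^\perp\bigr)=\text{sgn}\bigl((c_\pm-a)\cdot\hat U_{ab}^\perp\bigr)$. Applying Proposition~\ref{lem:top} to $K_-=(a,b,c_-)\in{\cal P}_h$ (whose positive edge is $e_{ab}$), and using that $\hat U_{c_-a}$ is a negative multiple of $c_--a$, I get $\text{sgn}(\tau)=\text{sgn}\bigl(\hat T(\pi(x))\cdot\hat U_{ab}\bigr)=\text{sgn}\bigl(\hat U_{c_-a}\cdot\hat U_{ab}^\perp\bigr)=-\text{sgn}\bigl((c_--a)\cdot\hat U_{ab}^\perp\bigr)$; and since $K_+\ne K_-$ are triangles of ${\cal T}_h$ sharing the edge $e_{ab}$, the vertex $c_+$ lies in the opposite open half-plane, so $\text{sgn}\bigl((c_+-a)\cdot\hat U_{ab}^\perp\bigr)=\text{sgn}(\tau)$. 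Substituting these into $\text{sgn}\bigl((y-a)\cdot\hat U_{ab}^\perp\bigr)=\text{sgn}(\lambda-\phi(x))\,\text{sgn}(\tau)$ and cancelling the nonzero factor $\text{sgn}(\tau)$ yields $\text{sgn}(\lambda-\phi(x))=+1$ for $y\in K_+$ and $=-1$ for $y\in K_-$, which is exactly the claim. The only real obstacle here is keeping the signs straight---above all getting the orientation identity $\hat N(\pi(x))\cdot\hat U_{ab}^\perp=\hat T(\pi(x))\cdot\hat U_{ab}$ right---since the computation itself is short.
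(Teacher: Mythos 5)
Your proof is correct and takes essentially the same route as the paper: reduce to the sign of $(y-a)\cdot\hat U_{ab}^\perp$, identify that sign by comparing with $c_\pm$, and pull back to $\hat N(\pi(x))$ via Proposition~\ref{lem:top}. The one place where you streamline is the rotation-invariance identity $\hat N(\pi(x))\cdot\hat U_{ab}^\perp = \hat T(\pi(x))\cdot\hat U_{ab}$, which replaces the paper's explicit decomposition of $\hat U_{ab}$ and $\hat U_{ab}^\perp$ in local $\{\hat n,\hat t\}$ coordinates (its equations \eqref{eq:ts-5}--\eqref{eq:ts-6}); that is a small, correct simplification, not a different argument.
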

\begin{proof}
  Denote $\hat{t}:=\hat{T}(\pi(x))$ and $\hat{n}:=\hat{N}(\pi(x))$. We
  consider first the case $y\in K_-$. 
  By choice of $y$, $x\neq y$ and hence $\hat{U}_{xy}$ is
  well-defined. Furthermore, $\hat{U}_{xy}$ is parallel to $\hat{n}$
  and hence
  \begin{align}
    \hat{U}_{xy}\cdot \hat{n} =
    \text{sgn}\left(\hat{U}_{xy}\cdot
      \hat{n}\right)\neq 0. \label{eq:ts-1}
  \end{align}
  From Proposition \ref{lem:top}, we know
  \begin{align}
    \text{sgn}\left(\hat{t}\cdot \hat{U}_{ab}\right) =
    -\text{sgn}\left(\hat{U}_{ab}^\perp\cdot \hat{U}_{ac}\right).
    \label{eq:ts-2}
  \end{align}
  However, $x\in e_{ab}$ and $y\in K_-$ implies 
  \begin{align}
    \text{sgn}\left(\hat{U}_{ab}^\perp\cdot \hat{U}_{xy}\right) =
    \text{sgn}\left(\hat{U}_{ab}^\perp\cdot \hat{U}_{ac}\right).
    \label{eq:ts-3}
  \end{align}
  Using \eqref{eq:ts-3} in \eqref{eq:ts-2} yields
  \begin{align}
    \text{sgn}\left(\hat{U}_{ab}^\perp\cdot \hat{U}_{xy}\right) =
    -\text{sgn}(\hat{t}\cdot\hat{U}_{ab}).
    \label{eq:ts-4}
  \end{align}
  
  Examining \eqref{eq:ts-4} above in a local coordinate system leads
  to the conclusion we seek. To this end, let
  $\alpha:=\arccos\left(\hat{n}\cdot \hat{U}_{ab}\right)$ and note
  from Proposition \ref{prop:angle-normal} that
  $0^\circ<\alpha<180^\circ$ and $\text{sgn}\left(\hat{t}\cdot
    \hat{U}_{ab}\right)\neq 0$.  We have
  \begin{subequations}
    \label{eq:ts-5}
    \begin{align}
    \hat{U}_{ab} &=\cos\alpha\,\hat{n}+\text{sgn}(\hat{t}\cdot
    \hat{U}_{ab})\sin\alpha\,\hat{t}.
    \label{eq:ts-5a}\\
    \hat{U}_{ab}^\perp &=\text{sgn}(\hat{t}\cdot
    \hat{U}_{ab})\sin\alpha\,\hat{n}-\cos\alpha\,\hat{t}.\label{eq:ts-5b}
  \end{align}
\end{subequations}
Evaluating \eqref{eq:ts-4} using \eqref{eq:ts-1} and \eqref{eq:ts-5}
yields
\begin{align}
  \text{sgn}\left( \left(\hat{n}\cdot\hat{U}_{xy}\right) \,
    \left(\hat{t}\cdot\hat{U}_{ab}\right)\, \sin\alpha\right) =
  -\text{sgn}\left(\left(\hat{t}\cdot\hat{U}_{ab}\right)\,
    \sin\alpha\right).
\label{eq:ts-6}
\end{align}
Noting that $\sin\alpha>0$ and $\hat{t}\cdot \hat{U}_{ab}\neq 0$ in
\eqref{eq:ts-6}, we conclude that
$\text{sgn}\left(\hat{n}\cdot\hat{U}_{xy}\right)=-1$, i.e.,
$\hat{U}_{xy}=-\hat{n}$.

Next, consider $y'\in \{\pi(x)+\lambda\,\hat{n}\,:\,\lambda\in
{\mathbb R}\}\in K_+$. Observe that since $K_-$ and $K_+$ are distinct
triangles sharing a common edge $e_{ab}$,
\begin{align}
  \text{sgn}\left(\hat{U}_{xy'}\cdot \hat{U}_{ab}^\perp\right) =
  -\text{sgn}\left(\hat{U}_{xy}\cdot \hat{U}_{ab}^\perp\right). \label{eq:ts-8}
\end{align}
Using $\hat{U}_{xy}=-\hat{n}$ and $\hat{n}\cdot \hat{U}_{ab}^\perp\neq
0$ (from \eqref{eq:ts-5b}) in \eqref{eq:ts-8} shows
$\hat{U}_{xy'}=\hat{n}$, which is the required result.
\end{proof}

\begin{proposition}
  Let $e_{pq}$ be an edge in ${\cal T}_h$ such that $\phi(p)\geq 0$
  and $\phi(q)<0$. Then $e_{pq}$ is an edge of two distinct triangles
  in ${\cal T}_h$.
  \label{prop:enum}
\end{proposition}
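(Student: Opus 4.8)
The plan is to locate a point of $\Gamma$ on the edge $e_{pq}$ and then use the hypothesis that $\Gamma$ is immersed in ${\cal T}_h$ to force $e_{pq}$ to meet the interior of the triangulated polygon, which makes it an edge shared by two triangles. For the first step: the signed distance function $\phi$ is $1$-Lipschitz, hence continuous, and $\phi(q)<0\le\phi(p)$, so restricting $\phi$ to the segment $e_{pq}$ the intermediate value theorem yields $z\in e_{pq}$ with $\phi(z)=0$, i.e.\ $d(z,\Gamma)=0$; since $\Gamma$ is a Jordan curve and therefore closed, $z\in\Gamma$. As $\Gamma$ is immersed in ${\cal T}_h$, $z\in\Gamma\subset\text{int}(\omega_h)$ where $\omega_h:=\bigcup_{K\in{\cal T}_h}\overline K$ is the polygon triangulated by ${\cal T}_h$. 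Finally $z\ne q$ since $\phi(q)<0$, so $z$ lies in $\textbf{ri}\,(e_{pq})$ or equals the endpoint $p$; since $\text{int}(\omega_h)$ is open, in the latter case it contains a point of $\textbf{ri}\,(e_{pq})$ near $p$, so in any case I may take $z\in\textbf{ri}\,(e_{pq})\cap\text{int}(\omega_h)$.

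It then suffices to prove that an edge of ${\cal T}_h$ whose relative interior meets $\text{int}(\omega_h)$ is an edge of two distinct triangles. Now $e_{pq}$ is an edge of at least one triangle by definition, and — since ${\cal T}_h$ is a triangulation — of at most two, so I only need to rule out that it belongs to exactly one. Using the finiteness of ${\cal T}_h$, choose a ball $B(z,\varepsilon)\subset\text{int}(\omega_h)$ that avoids every vertex of ${\cal T}_h$ and every edge other than $e_{pq}$; the line through $e_{pq}$ then splits $B(z,\varepsilon)$ into two open half-discs $D^+$ and $D^-$. Each $D^\pm$ is connected, misses the $1$-skeleton of ${\cal T}_h$, and is contained in $\bigcup_{K\in{\cal T}_h}\overline K$, hence is contained in a single open triangle, say $D^+\subset K_1$ and $D^-\subset K_2$. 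Since $z\in\overline{D^+}\cap\overline{D^-}$, both $\overline{K_1}$ and $\overline{K_2}$ contain the relative-interior point $z$ of $e_{pq}$, which forces $e_{pq}$ to be an edge of each; and $K_1\ne K_2$ because any triangle lies to one side of each of its edges whereas $D^+\cup D^-$ straddles $e_{pq}$. Thus $e_{pq}$ is an edge of the two distinct triangles $K_1$ and $K_2$.

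Most of this is routine. The only step demanding care is the last one — extracting the two adjacent triangles from a small disc about an interior point of the edge — which amounts to the standard fact that the topological boundary of a triangulated polygon is exactly the union of its one-triangle (boundary) edges; the rest follows at once from continuity of $\phi$ and the definition of an immersed curve.
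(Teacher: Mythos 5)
Your proof is correct and takes essentially the same route as the paper: both use the intermediate value theorem on $\phi|_{e_{pq}}$ to locate a point of $\Gamma$ on the edge and then invoke the immersion hypothesis to obtain a ball about it inside $\text{int}(\omega_h)$. You additionally spell out, via the half-disc argument, why such a ball forces $e_{pq}$ to be shared by two triangles, a step the paper's proof states as immediate.
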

\begin{proof}
  Let $\omega_h$ be the domain triangulated by ${\cal T}_h$. To prove
  the lemma, it suffices to find a non-empty open ball centered at any
  point in $e_{pq}$ and contained in $\omega_h$.  To this end, note
  that since $\phi$ is continuous on $e_{pq}$ and has opposite signs
  at vertices $p$ and $q$, we can find $\xi \in \Gamma\cap
  e_{pq}$. Since $\Gamma$ is assumed to be immersed in ${\cal T}_h$,
  we know that $\Gamma\subset \text{int}(\omega_h)$. Therefore, there
  exists $\varepsilon>0$ such that $B(\xi,\varepsilon)\subset
  \text{int}(\omega_h)$, which is the required ball.
\end{proof}

The following lemma is the essential step in showing that connected
components of $\Gamma_h$ are closed curves.
\begin{lemma}
  At least two positive edges intersect at each vertex in $\Gamma_h$.
  \label{lem:closed}
\end{lemma}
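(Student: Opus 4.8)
The plan is to fix a vertex $v$ in $\Gamma_h$ and ``rotate'' around it through the triangles of ${\cal T}_h$ incident to $v$, crossing at each step an edge that joins $v$ to a vertex at which $\phi<0$, until we land on a \emph{second} positively cut triangle, and hence on a second positive edge through $v$. Since $v\in\Gamma_h$ we have $\phi(v)\ge 0$, and $v$ is a vertex of some positive edge $e_{vb}$ of a triangle $K_1=(v,b,c)\in{\cal P}_h$ with $\phi(b)\ge 0>\phi(c)$. Call an edge $e_{vw}$ of a triangle in ${\cal T}_h$ a \emph{transversal edge at $v$} if $\phi(w)<0$; its endpoints then carry opposite signs of $\phi$, so by Proposition \ref{prop:enum} it belongs to exactly two triangles of ${\cal T}_h$, both having $v$ as a vertex and hence both having a transversal edge at $v$.

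The combinatorial core is the finite graph $G$ whose nodes are the triangles of ${\cal T}_h$ that contain $v$ and possess at least one transversal edge at $v$, two nodes being adjacent precisely when the corresponding triangles share a transversal edge at $v$. For a node $T=(v,w_1,w_2)$ I would verify that its degree in $G$ equals the number of vertices among $w_1,w_2$ at which $\phi<0$: when this number is $1$, $T$ is positively cut and its unique positive edge joins $v$ to the remaining $\phi\ge 0$ vertex; when it is $2$, the two transversal edges at $v$ lead to two distinct triangles, since two triangles sharing two edges must coincide. Hence every node of $G$ has degree $1$ or $2$, so $G$ is a disjoint union of simple paths and cycles.

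Now $K_1\in G$ has exactly one transversal edge at $v$, namely $e_{vc}$, so it is a degree-one node and therefore an endpoint of a path component $P$ of $G$; since $P$ carries a degree-one node it has a second, distinct endpoint $\tilde K$, and being degree one this $\tilde K$ is again a positively cut triangle. Its positive edge $\tilde e$ is incident to $v$, since one of the two $\phi\ge 0$ vertices of $\tilde K$ is $v$. Finally, were $\tilde e=e_{vb}$, Lemma \ref{lem:uniq-id} would force $\tilde K=K_1$, contradicting $\tilde K\neq K_1$; hence $e_{vb}$ and $\tilde e$ are two distinct positive edges meeting at $v$, which is the claim.

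The step I expect to demand the most care is this last one — ruling out that the rotation ``closes up'' and returns to the edge $e_{vb}$ it started from — and it is settled exactly by the uniqueness in Lemma \ref{lem:uniq-id}. The other point to state carefully is why Proposition \ref{prop:enum}, rather than a naive ``$v$ is an interior vertex'' argument, is the right input: ${\cal T}_h$ may be nonconforming and $v$ need not lie in the interior of the triangulated polygon, and Proposition \ref{prop:enum} is precisely what guarantees that each transversal edge at $v$ has a well-defined triangle on its other side, which is what legitimizes the degree count in $G$.
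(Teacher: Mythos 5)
Your proof is correct and uses essentially the same approach as the paper's: it rotates around the vertex through triangles sharing transversal edges (Proposition \ref{prop:enum} supplying the triangle on the far side at each step) and invokes Lemma \ref{lem:uniq-id} to finish. The only difference is packaging --- the paper walks directly under the contradiction hypothesis that there is just one positive edge at the vertex, while you organize the same rotation as a degree-$1$-or-$2$ graph whose path component through $K_1$ has a second degree-one endpoint, which is then a second positively cut triangle.
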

\begin{proof}
  Let $a$ be any vertex in $\Gamma_h$. Since $\Gamma_h$ is the union
  of positive edges in ${\cal T}_h$, it follows that $a$ is a vertex
  of at least one positive edge. Suppose that $a$ is a vertex of just
  one positive edge, say $e_{ab_0}$. Then, we can find a triangle
  $(a,b_0,b_1)\in {\cal P}_h$ that has positive edge $e_{ab_0}$.
  Since $\phi(a)\geq 0$ and $\phi(b_1)<0$, applying Proposition
  \ref{prop:enum} to edge $e_{ab_1}$ shows that there exists
  $(a,b_1,b_2)\in {\cal T}_h$ different from $(a,b_0,b_1)$. Since
  $e_{ab_2}$ is not a positive edge, we know $\phi(b_2)<0$. Repeating
  this step, we find distinct vertices $b_1, b_2, \ldots b_n$ such
  that $(a,b_i,b_{(i+1)})\in {\cal T}_h$ for $i=0$ to
  $n-1$, $\phi(b_i)<0$ for $i=1$ to $n-1$ and terminate when $b_{n}$
  coincides with $b_0$.  That $n$ is finite follows from the
  assumption of finite number of vertices in ${\cal T}_h$. In
  particular, we have shown that $(a,b_0,b_1)$ and $(a,b_{n-1},b_0)$ are
  distinct triangles in ${\cal T}_h$ that are both positively cut by
  $\Gamma$ and have positive edge $e_{ab_0}$. This contradicts Lemma
  \ref{lem:uniq-id}.
\end{proof}

\begin{lemma}
  If $e_{ap}$ and $e_{aq}$ are distinct positive edges in ${\cal
    T}_h$, then
  \begin{align}
    \text{\rm sgn}(\hat{U}_{ap}\cdot\hat{T}(\pi(a))) &=
    -\text{\rm sgn}(\hat{U}_{aq}\cdot\hat{T}(\pi(a)) \neq 0.
    \label{eq:simple-main}
  \end{align}
  \label{lem:simple}
\end{lemma}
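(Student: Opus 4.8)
The plan splits along the two assertions of the lemma. The nonvanishing is immediate: both $e_{ap}$ and $e_{aq}$ are positive edges, so Proposition~\ref{prop:angle-normal} evaluated at the endpoint $x=a$ gives $|\hat T(\pi(a))\cdot\hat U_{ap}|>0$ and $|\hat T(\pi(a))\cdot\hat U_{aq}|>0$. For the sign statement I would reduce everything to a short combinatorial count on the fan of triangles of ${\cal T}_h$ incident on $a$ (the star of $a$), assuming first that $a$ is an interior vertex. Call a neighbour $w$ of $a$ a ``$+$''-vertex if $\phi(w)\geq0$ and a ``$-$''-vertex if $\phi(w)<0$. Since $a\in\Gamma_h$ forces $\phi(a)\geq0$, a positive edge incident on $a$ is precisely an edge $e_{aw}$ with $w$ a ``$+$''-vertex that is fan-adjacent to a ``$-$''-vertex $w'$: the triangle $(a,w,w')$ then has $\phi\geq0$ at exactly two of its vertices, hence is positively cut with positive edge $e_{aw}$. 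Moreover Lemma~\ref{lem:uniq-id} rules out a ``$+$''-vertex having ``$-$''-vertices as both of its fan-neighbours (it would be a vertex of the positive edge of two distinct positively cut triangles), so every maximal run of ``$+$''-vertices along the fan has length at least two and contributes exactly two positive edges, namely those at its two ends.

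The role of Proposition~\ref{lem:top} is to pin down the signs of those edges. Evaluating \eqref{eq:j-main} at $x=a$ shows that $\text{sgn}(\hat T(\pi(a))\cdot\hat U_{aw})$ equals the orientation around $a$ of the positively cut triangle $(a,w,w')$; reading the neighbours of $a$ counterclockwise, this means $w$ lies at a ``rising'' transition of the sign sequence (a ``$-$''-vertex immediately followed by $w$) exactly when the sign is $+$, and at a ``falling'' transition exactly when it is $-$. Hence at the two ends of any maximal ``$+$''-run one of the two positive edges has sign $+$ and the other sign $-$, so, read counterclockwise, the signs of the positive edges incident on $a$ alternate $+,-,+,-,\dots$.

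The last step is a count of sign changes. As $w$ ranges counterclockwise over all neighbours of $a$, the directions $\hat U_{aw}$ sweep the unit circle exactly once (the interior angles at $a$ of the triangles forming the star sum to $2\pi$), and $\text{sgn}(\hat T(\pi(a))\cdot\hat U_{aw})$ is just the sign of the cosine of the angle between the fixed vector $\hat T(\pi(a))$ and $\hat U_{aw}$, which changes at most twice over one revolution; a fortiori it changes at most twice along the subsequence of positive-edge neighbours. But if the fan contained $\ell$ maximal ``$+$''-runs there would be $2\ell$ positive edges whose signs alternate cyclically, i.e.\ $2\ell$ sign changes; hence $\ell=1$, there are exactly two positive edges at $a$, and they carry opposite signs, which is the claim. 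The step that needs the most care --- the expected main obstacle --- is the boundary-vertex case, where the star is an arc rather than a cycle: one first notes that $\phi\geq0$ at the two extreme neighbours of $a$ (otherwise the corresponding boundary edge of ${\cal T}_h$ would change sign and hence meet $\Gamma$, contradicting Proposition~\ref{prop:enum} since $\Gamma$ is immersed), so both extreme runs are ``$+$''-runs, and the same ``at most two sign changes'' bookkeeping then forces exactly one ``$-$''-run and again exactly two positive edges of opposite sign.
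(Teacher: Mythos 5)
Your proof is correct, but it takes a genuinely different route from the paper's. The paper argues by contradiction: it assumes two positive edges $e_{ap},e_{aq}$ lie on the same side of $\hat N(\pi(a))$, enumerates the neighbours of $a$ clockwise between $p$ and $q$, and marches from one to the other using Proposition~\ref{prop:enum} to produce a chain of positively cut triangles, ending in a contradiction with Corollary~\ref{cor:angle-order}. You instead run a global parity count on the whole fan: classify neighbours of $a$ by the sign of $\phi$, observe via Lemma~\ref{lem:uniq-id} that every maximal ``$+$''-run has length at least two and contributes exactly two positive edges, read off from Proposition~\ref{lem:top} that the rising-transition edge gets sign $+$ and the falling one sign $-$ (so the positive-edge signs alternate cyclically), and finally note that $\hat T(\pi(a))\cdot\hat U$ changes sign at most twice over one revolution of $\hat U$, forcing a single ``$+$''-run and hence exactly two positive edges of opposite sign. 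What this buys you is a unification: your argument simultaneously delivers Lemma~\ref{lem:closed} (at least two), Lemma~\ref{cor:two-vertex} (exactly two), and the sign statement of Lemma~\ref{lem:simple}, whereas the paper obtains the ``exactly two'' count only by combining Lemmas~\ref{lem:closed} and~\ref{lem:simple} afterward. Your handling of the boundary-vertex case via Proposition~\ref{prop:enum} (forcing the two extreme neighbours to be ``$+$''-vertices, so the run pattern begins and ends with ``$+$'') is also cleaner than the paper's treatment, which leaves the boundary case somewhat implicit in the fan-marching step. The one claim worth stating with slightly more care in a final write-up is the identification of $\text{sgn}(\hat U_{ca}\cdot\hat U_{ab}^\perp)$ with the triangle orientation at $a$ and hence with the rise/fall of the transition, but the computation is elementary and your conclusion is right.
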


To prove the lemma, we will use the following corollary of Proposition
\ref{lem:top}. Note that unlike Proposition
\ref{lem:sameside}, $a$ need not be the proximal vertex in the result
below.
\begin{corollary}[of Proposition \ref{lem:top}]
  Let $(a,b,c)\in{\cal P}_h$ have positive edge $e_{ab}$ and denote
  $\hat{t} = \hat{T}(\pi(a))$ and $\hat{n}=\hat{N}(\pi(a))$.  Then
  \begin{align}
    \text{\rm sgn}(\hat{t}\cdot\hat{U}_{ab}) =
    \text{\rm sgn}(\hat{t}\cdot\hat{U}_{ac}) \Rightarrow
    \hat{n}\cdot\hat{U}_{ab} >
    \hat{n}\cdot\hat{U}_{ac}. \label{eq:angle-order-main}
  \end{align}
  \label{cor:angle-order}
\end{corollary}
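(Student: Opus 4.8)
The plan is to follow the shape of the proof of Proposition \ref{lem:sameside}, but to extract the orientation information from Proposition \ref{lem:top} rather than from Proposition \ref{lem:anglebd1}, so that the argument never refers to which endpoint of $e_{ab}$ is the proximal vertex. Set $\hat{t}=\hat{T}(\pi(a))$ and $\hat{n}=\hat{N}(\pi(a))$, a right-handed orthonormal frame. First I would record, using Proposition \ref{prop:angle-normal}, that $\hat{t}\cdot\hat{U}_{ab}\neq 0$ and $|\hat{n}\cdot\hat{U}_{ab}|<1$, so that $\alpha_b:=\arccos(\hat{n}\cdot\hat{U}_{ab})\in(0^\circ,180^\circ)$ and $\hat{t}\cdot\hat{U}_{ab}=\iota\sin\alpha_b$ with $\iota:=\text{sgn}(\hat{t}\cdot\hat{U}_{ab})\neq 0$. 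The hypothesis $\text{sgn}(\hat{t}\cdot\hat{U}_{ac})=\iota$ then forces $\hat{t}\cdot\hat{U}_{ac}\neq 0$, hence $|\hat{n}\cdot\hat{U}_{ac}|<1$, so likewise $\alpha_c:=\arccos(\hat{n}\cdot\hat{U}_{ac})\in(0^\circ,180^\circ)$ and $\hat{t}\cdot\hat{U}_{ac}=\iota\sin\alpha_c$. Consequently
\[
  \hat{U}_{ai}=\cos\alpha_i\,\hat{n}+\iota\sin\alpha_i\,\hat{t}\quad(i=b,c),\qquad
  \hat{U}_{ab}^\perp=\iota\sin\alpha_b\,\hat{n}-\cos\alpha_b\,\hat{t},
\]
the last expression being exactly the one already derived in the proof of Proposition \ref{lem:top}.

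Next I would compute directly that $\hat{U}_{ca}\cdot\hat{U}_{ab}^\perp=-\hat{U}_{ac}\cdot\hat{U}_{ab}^\perp=-\iota\sin(\alpha_b-\alpha_c)$, and note $\alpha_b\neq\alpha_c$ (otherwise $\hat{U}_{ab}=\hat{U}_{ac}$, impossible for the nondegenerate triangle $(a,b,c)$), so $\sin(\alpha_b-\alpha_c)\neq 0$. Applying Proposition \ref{lem:top} with $x=a$ gives $\text{sgn}(\hat{U}_{ca}\cdot\hat{U}_{ab}^\perp)=\text{sgn}(\hat{t}\cdot\hat{U}_{ab})=\iota$; since $\iota=\pm1$, combining this with the previous identity yields $\text{sgn}(\sin(\alpha_b-\alpha_c))=-1$, i.e. $\sin(\alpha_b-\alpha_c)<0$. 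Because $\alpha_b-\alpha_c\in(-180^\circ,180^\circ)$ this means $\alpha_b<\alpha_c$, and as $\cos$ is strictly decreasing on $[0^\circ,180^\circ]$ we get $\hat{n}\cdot\hat{U}_{ab}=\cos\alpha_b>\cos\alpha_c=\hat{n}\cdot\hat{U}_{ac}$, which is \eqref{eq:angle-order-main}. (When $a$ is the proximal vertex the hypothesis is automatic by \eqref{eq:sameside-a} and the conclusion reduces to \eqref{eq:sameside-b}.)

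I do not expect a serious obstacle here; the main point to get right — and the reason this is stated as a corollary of Proposition \ref{lem:top} rather than of Proposition \ref{lem:sameside} — is that the angle-chasing proof of \eqref{eq:sameside-b} cannot be reused verbatim, since it bounds $\hat{n}\cdot\hat{U}_{ab}$ from below through Proposition \ref{lem:anglebd1}, which presupposes that $a$ is the proximal vertex. Routing the orientation information instead through Proposition \ref{lem:top}, whose statement and proof impose no condition on which vertex is proximal, is precisely what makes the corollary valid for a general vertex $a$. The only remaining delicacy is bookkeeping: keeping the orientation of $\{\hat{T},\hat{N}\}$ and the convention for $\hat{U}_{ab}^\perp$ mutually consistent, and using the hypothesis to guarantee that $\hat{t}\cdot\hat{U}_{ac}$ (equivalently $\sin\alpha_c$) does not vanish, so that $\hat{U}_{ac}$ admits the frame expansion displayed above.
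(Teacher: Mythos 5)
Your proposal is correct and follows essentially the same route as the paper's proof: both introduce the angles $\alpha_b,\alpha_c=\arccos(\hat{n}\cdot\hat{U}_{ab}),\arccos(\hat{n}\cdot\hat{U}_{ac})$, expand $\hat{U}_{ab},\hat{U}_{ac},\hat{U}_{ab}^\perp$ in the frame $\{\hat{t},\hat{n}\}$ with common sign $\iota$, evaluate $\hat{U}_{ca}\cdot\hat{U}_{ab}^\perp=\iota\sin(\alpha_c-\alpha_b)$, invoke Proposition \ref{lem:top} at $x=a$ to pin down $\mathrm{sgn}(\sin(\alpha_c-\alpha_b))=1$, rule out the degenerate case by nondegeneracy of the triangle, and conclude $\alpha_b<\alpha_c$. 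The only small additions you make — explicitly bounding $\alpha_b-\alpha_c\in(-180^\circ,180^\circ)$ and spelling out that the hypothesis forces $\hat{t}\cdot\hat{U}_{ac}\neq0$ so the frame expansion of $\hat{U}_{ac}$ is legitimate — are minor clarifications, not a different argument.
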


\begin{proof}
  Let $\text{sgn}(\hat{t}\cdot\hat{U}_{ab}) =
  \text{sgn}(\hat{t}\cdot\hat{U}_{ac}) = \iota$ and
  $\alpha_i=\arccos(\hat{n}\cdot\hat{U}_{ai})$ for $i=b,c$. Using
  \begin{align}
    \hat{U}_{ca}\cdot\hat{U}_{ab}^\perp 
    = -\left(\cos\alpha_c\,\hat{n}+\iota\,\sin\alpha_c\,\hat{t} \right)\cdot
    \left(\iota\sin\alpha_b\,\hat{n}-\cos\alpha_b\,\hat{t} \right) = \iota\sin(\alpha_c-\alpha_b), \notag
  \end{align}
  and Proposition \ref{lem:top}, we get
  \begin{align}
    \iota 
    = \text{\rm sgn}(\hat{t}\cdot\hat{U}_{ab}) 
    = \text{\rm sgn}(\hat{U}_{ca}\cdot\hat{U}_{ab}^\perp)
    = \text{\rm sgn}(\iota\sin(\alpha_c-\alpha_b))
    = \iota\ \text{\rm sgn}(\sin(\alpha_c-\alpha_b)). \label{eq:m1}
  \end{align}
  Since $\iota\neq 0$ from Proposition \ref{prop:angle-normal}, and
  $\sin(\alpha_c-\alpha_b)\neq 0$ because edges $e_{ab}$ and $e_{ac}$
  in triangle $(a,b,c)$ cannot be parallel, we conclude that
  $\text{\rm sgn}(\sin(\alpha_c-\alpha_b))=1$. Hence
  $\alpha_c>\alpha_b$.
\end{proof}

\begin{proof}[Proof of Lemma \ref{lem:simple}]
  We proceed by contradiction.  Let $\hat{t}=\hat{T}(\pi(a))$ and
  $\hat{n}=\hat{N}(\pi(a))$. Proposition \ref{prop:angle-normal} shows
  that neither term in \eqref{eq:simple-main} equals zero.  Therefore,
  without loss of generality, suppose that
  \begin{align}
    \text{sgn}(\hat{t}\cdot\hat{U}_{ap})=\text{sgn}(\hat{t}\cdot\hat{U}_{aq})=1. \label{eq:n1}
  \end{align}
  Since $e_{ap}$ and $e_{aq}$ are distinct edges, \eqref{eq:n1}
  implies that $\hat{n}\cdot\hat{U}_{ap}\neq
  \hat{n}\cdot\hat{U}_{aq}$. Therefore, without loss of generality, we
  assume that
  \begin{align}
    \hat{n}\cdot \hat{U}_{ap} > \hat{n}\cdot\hat{U}_{aq}. \label{eq:n2}
  \end{align}
  Let $\{p_1,\ldots,p_n\}$ be a clockwise enumeration of all vertices
  in ${\cal T}_h$ such that $e_{ap_i}$ is an edge in ${\cal T}_h$ for
  each $i=1$ to $n$ and $p_1=p$. Let $m\leq n$ be such that $q=p_m$.
  Without loss of generality, we assume that $e_{ap_i}$ is not a
  positive edge for $i=2$ to $m-1$.  Denote by $\alpha_i\in
  [0^\circ,360^\circ)$, the angle between $\hat{n}$ and
  $\hat{U}_{ap_i}$ measured in the clockwise sense.  From
  \eqref{eq:n1} and \eqref{eq:n2}, we get that
  $0^\circ<\alpha_1<\alpha_m<180^\circ$.  Using the clockwise ordering
  of vertices, this implies that
  \begin{align}
    0^\circ<\alpha_1<\alpha_2<\ldots
    <\alpha_m<180^\circ. \label{eq:n3}
  \end{align}  
  
  Arguing by contradiction, we now show that $(a,p_1,p_2)\in{\cal
    T}_h$ and is positively cut.  Suppose that $(a,p_1,p_2)\notin
  {\cal P}_h$, which allows also for the possibility that
  $(a,p_1,p_2)\notin {\cal T}_h$ when $p_1$ and $p_2$ are not joined
  by an edge.  Then since $e_{ap_1}$ is a positive edge,
  $(a,p_n,p_1)\in {\cal T}_h$ and is positively cut. Note that the
  interior angle at $a$ in $(a,p_n,p_1)$, namely the angle between
  edges $e_{ap_n}$ and $e_{ap_1}$ measured in the clockwise sense, has
  to be smaller than $180^\circ$. Therefore, either
  $\alpha_n<\alpha_1$ or $\alpha_n-\alpha_1>180^\circ$. In either
  case, we have
  \begin{align}
    \hat{U}_{p_na}\cdot\hat{U}_{ap_1}^\perp 
    = -(\cos\alpha_n\,\hat{n}+\sin\alpha_n\,\hat{t})\cdot(\sin\alpha_1\,\hat{n}-\cos\alpha_1\,\hat{t})
    = \sin(\alpha_n-\alpha_1) 
    < 0. \label{eq:n4}
  \end{align}
  Using Proposition \ref{lem:top} in $(a,p_1,p_n)$, \eqref{eq:n1} and
  \eqref{eq:n4}, we get
  \begin{align}
    1 = \text{sgn}(\hat{U}_{ap_1}\cdot\hat{t}) = \text{sgn}(\hat{U}_{p_na}\cdot\hat{U}_{ap_1}^\perp) = -1, \notag
  \end{align}
  which is a contradiction. Hence, we conclude that $(a,p_1,p_2)\in
  {\cal T}_h$ and is positively cut. 

  Triangle $(a,p_1,p_2)$ being positively cut with positive edge
  $e_{ap_1}$ implies $\phi(p_2)<0$. Then Proposition \ref{prop:enum}
  shows that $(a,p_2,p_3)\in {\cal T}_h$. If $m\neq 3$, then
  $\phi(p_3)<0$ since $e_{ap_3}$ is not a positive edge. Repeating
  this step, we show that $(a,p_i,p_{(i+1)})\in {\cal T}_h$ for $i=1$
  to $m-1$ and that $\phi(p_i)<0$ for $i=2$ to $m-1$. In particular,
  we get that $(a,p_{m-1},p_m)\in {\cal T}_h$ and is positively
  cut. This contradicts Corollary \ref{cor:angle-order} because
  \eqref{eq:n3} shows that $\text{sgn}(\hat{t}\cdot\hat{U}_{ap_{m-1}})
  = \text{sgn}(\hat{t}\cdot\hat{U}_{ap_{m}})$ and
  $\hat{n}\cdot\hat{U}_{ap_{m-1}}>\hat{n}\cdot\hat{U}_{ap_m}$.\\
  An identical argument with an anti-clockwise ordering of vertices
  applies to the case when $\hat{t}\cdot\hat{U}_{ap}$ and
  $\hat{t}\cdot\hat{U}_{aq}$ are both strictly negative.
\end{proof}

\noindent
Lemma \ref{cor:two-vertex} follows immediately from Lemmas
\ref{lem:closed} and \ref{lem:simple}.

\end{document}